\newtheorem{theorem}{Theorem}[section]
\newtheorem{counterexample}{Counterexample}[section]
\newtheorem{example}{Example}[section]
\newtheorem{corollary}{Corollary}[section]
\newtheorem{definition}{Definition}[section]
\newtheorem{lemma}{Lemma}[section]
\newtheorem{remark}{Remark}[section]
\begin{document}
\title{On comparison of the second-order statistics from independent
and interdependent exponentiated location-scale distributed random variables}
\author{{\large { Sangita {\bf Das}\thanks {Email address:
                sangitadas118@gmail.com}~ and Suchandan {\bf
                Kayal}\thanks {Email address (corresponding author):
                kayals@nitrkl.ac.in,~suchandan.kayal@gmail.com}}} \\
    { \em \small {\it Department of Mathematics, National Institute of
            Technology Rourkela, Rourkela-769008, India.}}}
\date{}
\maketitle
\begin{center}Abstract
\end{center}
Consider two batches of independent or interdependent  exponentiated location-scale distributed heterogeneous random variables. This article investigates ordering results for the second-order statistics from these batches when a vector of parameters is switched to another vector of parameters in the specified model. Sufficient conditions for the usual stochastic order and the hazard rate order are derived. Some applications of the established results are presented.
\\
\\
\noindent{\bf Keywords:} Usual stochastic order; hazard rate order; majorization; Archimedean copula; exponentiated location-scale model; fail-safe system.
\\
\\
{\bf Mathematics Subject Classification:} 60E15; 90B25.

\section{Introduction}
Let $X_{1},\cdots,X_{n}$ be a random sample. The sample values can be arranged in ascending order, denoted by
$X_{1:n}\le \cdots\le X_{n:n}$. This increasing arrangement is known as the order statistics of the sample. The order statistics are of interest in many areas of statistics and applied probability including actuarial science, reliability, risk management and auction theory. Among various reliability systems, a popular system is the $k$-out-of-$n$ system. It functions if at least $k$ components operate. The order statistic $X_{n-k+1}$ represents the lifetime of the $k$-out-of-$n$ system. We refer to \cite{arnold1992first} and \cite{david2004order} for detailed description and applications of the order statistics. Note that in the context of the reliability and life testing experiments, the stochastic comparisons of the systems' lifetimes is of great importance. In this regard, various stochastic orders (see \cite{shaked2007stochastic}) play a vital role. In most of the cases, attention has been paid to establish stochastic comparisons of the lifetimes of series and parallel systems. The components' lifetimes may be independent or interdependent. Here, we present few recent developments on this topic. \cite{kochar2015stochastic} considered scale models and obtained likelihood ratio ordering between the largest order statistics. \cite{torrado2015magnitude} developed different ordering results to compare smallest order statistics arising from the proportional reversed hazard rate model. \cite{li2015ordering} presented various stochastic orderings between the sample maximums arising from two groups of proportional hazard dependent random variables. \cite{Li2016} studied stochastic comparisons of the order statistics from random variables following the scale
model. Dependent case was considered to obtain the usual stochastic order of the sample extremes.  \cite{hazra2017stochastic} considered the problem of stochastic comparison for the largest order statistics from location-scale family of distributions. \cite{hazra2018stochastic} obtained various stochastic comparison results between minimum order statistics for general location-scale model. \cite{kayal2019dls} and \cite{kayal2019ls} respectively obtained some ordering results for the extreme order
statistics arising from dependent and independent heterogeneous exponentiated location-scale (ELS) random
observations. \cite{zhang2019stochastic} compared parallel and series systems with heterogeneous resilience-scaled components. \cite{kundu2020stochastic} studied series systems with heterogeneous dependent and independent location-scale family distributed components and proposed some ordering results.
 \cite{li2020prh} established several comparison results for the largest order
statistics in accordance to the reversed hazard rate and likelihood ratio orders for the proportional reversed hazard rate model.

It is well-known that the second order statistic characterizes the time to failure of an $(n-1)$-out-of-$n$ system. In reliability and industrial engineering, this particular system is known as the fail-safe system (see \cite{barlow1996mathematical}). Further, for the bid in second-price reverse auction, the second-order statistic represents the winner's price (see \cite{paul2004mean} and \cite{li2005note}). Because of these applications, a number of researchers have studied stochastic comparisons of the second-order statistics in various stochastic senses. \cite{pualtuanea2008comparison} obtained hazard rate ordering between the second-order statistics from independent exponential random variables. \cite{ zhao2009likelihood} generalized the result of \cite{pualtuanea2008comparison} in the likelihood ratio ordering. \cite{zhao2009characterization} extended the
work of \cite{ zhao2009likelihood} in terms of the mean residual life order.
\cite{zhao2011right} obtained right spread ordering between the second-order statistics from heterogeneous exponential random variables. They also extended their results for the case of the proportional
decreasing hazard rate models. Similar results have been obtained in terms of the dispersive ordering by \cite{zhao2011dispersive}. Necessary and sufficient conditions for comparing the second-order statistics arising from exponential distributions have been derived by \cite{balakrishnan2015improved} in the sense of the mean residual life, dispersive, hazard rate, and likelihood ratio orderings. \cite{torrado2015tail} studied stochastic comparisons
between consecutive 2-within-m-out-of-n systems with components satisfying weak tail conditions for the proportional hazard rate model. \cite{cai2017hazard} developed comparison of the hazard rate functions of the second-order statistics under the assumption that the order statistics arise from two sets of independent multiple-outlier proportional hazard rates models.


\subsection{ELS model, its usefulness and plan of the paper}

A nonnegative absolutely continuous random variable $X$ is said to have exponentiated location-scale distribution if its cumulative distribution function is given by
\begin{eqnarray}\label{eq1.1}
F_{X}(x)\equiv
F_{X}(x|\lambda,\theta,\alpha;F_b) =\left[F_{b}\left(\frac{x-\lambda}{\theta}\right)\right]^{\alpha},~x>\lambda>0,~\alpha,~\theta>0.
\end{eqnarray}
The function $F_b$ is the baseline distribution function and $\lambda,~\theta,~\alpha$ are respectively the location, scale, shape parameters. The inclusion of a location parameter in the model is very useful because of many reasons.
In reliability analysis, it is very common that the failure of a unit does not occur immediately (say, $t=0$) just after allowing it working. Failure generally occurs after a duration of time (say, $t=\lambda>0$). Further, in finance, the claim of an insurance policy holder is possible after a specified period. It varies insurance company to insurance company. In medical study and economy, we often get datasets, which are skewed in nature. For example, in finance, we usually have small gains and occasionally, have a few large losses. This type of datasets is negatively skewed. To capture the skewness, we need skewness parameter. The shape parameter in the model plays a vital role to capture the skewness contained in the dataset. Denote $X\sim
\mathbb{ELS}(\lambda,\theta,\alpha;F_b)$ when the distribution function is given by (\ref{eq1.1}). Here, we use $f_{X}$, $\bar{F}_{X}=1-F_{X}$ and $r_{X}=f_{X}/\bar{F}_{X}$  to denote respectively the probability density, survival and hazard rate functions of the model. These functions for the baseline distribution are denoted by $f_b$, $\bar{F}_b=1-F_b$ and $r_b=f_b/\bar{F}_b$. Note that, the exponentiated location-scale model becomes the location-scale model, scale model and location model, for $\alpha=1$, $(\alpha,\lambda)=(1,0)$ and $(\alpha,\theta)=(1,1)$, respectively. Thus, the model with distribution function (\ref{eq1.1}) can accommodate a lot of statistical distributions. As a result, it is a flexible family of distributions.

After going through the literature, we observe that no study has been developed on the ordering properties of the second-order statistics from two sets of heterogeneous exponentiated location-scale distributed random variables so far. Our purpose in this paper is to investigate the existence of stochastic orderings between the second-order statistics. The rest of the paper is laid out as follows. 

In the next section, we review the stochastic orders, majorization and some related orders. We also present few lemmas which are  useful in proving main results. Section $3$ has  two subsections. In Subsection $3.1$, we obtain the usual stochastic and hazard rate orders between the second-order statistics under the assumption that the observations are independent. Subsection $3.2$ deals with the dependent observations coupled by Archimedean copulas. Here, we establish similar results. Applications of the established results in reliability and auction theory are presented in Section $4$. Finally, in Section $5$, some concluding remarks are reported.

Throughout the article, the random variables are assumed to be nonnegative and absolutely continuous. The derivatives always exist whenever they are used. Prime ($'$) stands for the derivative of a function, for example, $g'(x)=\frac{dg(x)}{dx}.$ Two terms increasing and decreasing are used in nonstrict sense. We use the notations:  $\mathcal{D}_{+}=\{(x_1,\cdots,x_n):x_{1}\geq
\cdots\geq x_{n}>0\}$,
$\mathcal{I}_{+}=\{(x_1,\cdots,x_n):0<x_{1}\leq \cdots\leq x_{n}\}$ and
$\bm{1}_{n}=(1,\cdots,1).$ Bold symbols will be used to denote vectors.

\section{Preliminaries\setcounter{equation}{0}}
In this section, we recall notions of stochastic orderings, majorization-based orderings, some definitions and lemmas, which are useful in subsequent sections. First, we present the concept of the usual stochastic and hazard rate orderings.
\subsection{Stochastic orders}
Consider two nonnegative absolutely continuous random variables $Z$ and $W$. Let the density functions, distribution functions, survival functions and hazard rate functions of $Z$ and $W$  be denoted by $f_{Z}$ and $f_{W}$, $F_{Z}$ and $F_{W}$, $\bar
F_{Z}=1-F_{Z}$ and $\bar F_{W}=1-F_{W}$,  $r_{Z}=f_{Z}/\bar
F_{Z}$ and $r_{W}=f_{W}/
\bar F_{W}$, respectively.
\begin{definition}
	A random variable $W$ is said to be smaller than $Z$ in the sense of the
	\begin{itemize}
		\item hazard rate order (written $W\leq_{hr}Z$),
		if $r_{Z}(x)\leq r_{W}(x)$, for all $x$.
		
		\item usual stochastic order (written $W\leq_{st}Z$), if
		$\bar F_{W}(x)\leq\bar F_{Z}(x)$, for all $x$.
	\end{itemize}
\end{definition}
The implication that the hazard rate order implies the usual stochastic order is obvious. For an excellent exposition on this topic, we refer the reader to \cite{shaked2007stochastic}.

\subsection{Majorization and related orders}
To establish various inequalities in different  areas of statistics, one of the easiest tools is the notion of majorization. In the following, we present definitions of majorization orders. Let $\boldsymbol{x} =
\left(x_{1},\cdots,x_{n}\right)$ and $\boldsymbol{y} =
\left(y_{1},\cdots,y_{n}\right)$ be two $n$ dimensional vectors taken from  $\mathbb{A},$ where $\mathbb{A} \subset \mathbb{R}^{n}$ and $\mathbb{R}^{n}$ be
an $n$-dimensional Euclidean space. The order coordinates of the
vectors $\boldsymbol{x}$ and $\boldsymbol{y}$ are denoted by  $x_{1:n}\leq \cdots \leq x_{n:n}$ and
$y_{1:n}\leq\cdots \leq y_{n:n},$  respectively.
\begin{definition}\label{def2.2}
	A vector $\boldsymbol{x}$ is said to be
	\begin{itemize}
		\item  majorized by the vector $\boldsymbol{y},$ (written
		 $\boldsymbol{x}\preceq^{m} \boldsymbol{y}$), if for each $l=1,\cdots,n-1$, we have
		$\sum_{i=1}^{l}x_{i:n}\geq \sum_{i=1}^{l}y_{i:n}$ and
		$\sum_{i=1}^{n}x_{i:n}=\sum_{i=1}^{n}y_{i:n}$.
		\item weakly submajorized by the vector $\boldsymbol{y},$ (written  $\boldsymbol{x}\preceq_{w} \boldsymbol{y}$), if for each $l=1,\cdots,n$, we have
		$\sum_{i=l}^{n}x_{i:n}\leq \sum_{i=l}^{n}y_{i:n}.$
		\item weakly supermajorized by  the vector $\boldsymbol{y},$ (written
		 $\boldsymbol{x}\preceq^{w} \boldsymbol{y}$), if for each $l=1,\cdots,n$, we have
		$\sum_{i=1}^{l}x_{i:n}\geq \sum_{i=1}^{l}y_{i:n}.$
		\item reciprocally majorized by the vector $\boldsymbol{y},$ (written
		 $\boldsymbol{x}\preceq^{rm}\boldsymbol{y}$), if $\sum_{i=1}^{l}x_{i:n}^{-1}\le
		\sum_{i=1}^{l}y_{i:n}^{-1}$, for all $l=1,\cdots,n$.
	\end{itemize}
\end{definition}
It is easy to verify that  $\boldsymbol{x}\preceq^{w} \boldsymbol{y}\Rightarrow \boldsymbol{x}\preceq^{rm} \boldsymbol{y}$. Also, majorization order implies both weakly supermajorization and weakly submajorization orders. For the interested readers, we refer to \cite{Marshall2011} for an extensive and comprehensive details on the theory of majorization and its applications in the field of statistics. Next, we provide the definition of the Schur-convex and Schur-concave functions. Note that the concept of majorization is concerned with Schur functions.

\begin{definition}\label{def2.3}
	A real-valued function $h:\mathbb{R}^n\rightarrow \mathbb{R}$ is said
	to be Schur-convex (Schur-concave) on $\mathbb{R}^n$, if
	$\boldsymbol {x}\overset{m}{\succeq}\boldsymbol{ y}\Rightarrow
	h(\boldsymbol { x})\geq( \leq)~h(\boldsymbol { y}), \text{ for all } \boldsymbol { x}, ~\boldsymbol
	{ y} \in \mathbb{R}^n.$
\end{definition}

Now, we present the following lemmas, which play a crucial role to assist the proof of the results in the subsequent sections.
{\begin{lemma}(\cite{kundu2016some})\label{lem2.1}
		Let $h:\mathcal{D}_+\rightarrow \mathbb{ R}$ be a function, continuously differentiable on the interior of $\mathcal{D}_+.$ Then, for $\boldsymbol{x},~\boldsymbol{y}\in\mathcal{D}_+,$
		$\boldsymbol{x}\succeq^{m}\boldsymbol{y}\text{ implies } h(\boldsymbol{x})\geq(\text{respectively},~\leq )~h(\boldsymbol{y})$
		if and only if,
		$h_{(k)}(\boldsymbol{ z}) \text{ is decreasing (respectively, increasing) in } k=1,\cdots,n,$
		where $h_{(k)}(\boldsymbol{ z})=\partial h(\boldsymbol{z})/\partial z_k$ denotes the partial derivative of $h$ with respect to its $k$th argument.
	\end{lemma}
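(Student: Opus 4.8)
The plan is to prove this by an application of the second-derivative criterion embodied in Lemma~\ref{lem2.1}. This lemma characterizes when a function on $\mathcal{D}_+$ is monotone with respect to the majorization order via the monotonicity of its partial derivatives in the index $k$. The statement in question appears to be precisely this lemma's proof (or a closely related corollary), so the task reduces to establishing the equivalence between the majorization-monotonicity property of $h$ and the ordering $h_{(1)}(\boldsymbol z)\le h_{(2)}(\boldsymbol z)\le\cdots\le h_{(n)}(\boldsymbol z)$ (in the Schur-increasing case) uniformly on the interior of $\mathcal{D}_+$.

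First I would set up the standard reduction: since $\boldsymbol x\succeq^m\boldsymbol y$ on $\mathcal{D}_+$ (where coordinates are already in decreasing order) can be realized by a finite sequence of \emph{Robin Hood} (Hardy--Littlewood--P\'olya) transfers that keep all intermediate vectors in $\mathcal{D}_+$, it suffices to prove the result for a single such transfer. Concretely, one connects $\boldsymbol y$ to $\boldsymbol x$ by a path $\boldsymbol z(t)$ that increases coordinate $i$ and decreases coordinate $j$ (with $i<j$, hence $z_i\ge z_j$) at equal and opposite rates, staying in $\mathcal{D}_+$. Along this path,
\[
\frac{d}{dt}\,h(\boldsymbol z(t)) = h_{(i)}(\boldsymbol z(t)) - h_{(j)}(\boldsymbol z(t)).
\]
If the partial derivatives are increasing in the index (so $h_{(i)}\le h_{(j)}$ whenever $i<j$, because the $i$th coordinate is the larger one), this derivative is $\le 0$, meaning $h$ decreases as we move from $\boldsymbol y$ toward the more spread-out $\boldsymbol x$; wait—one must track the direction carefully. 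The transfer that takes $\boldsymbol y$ to something it majorizes moves mass from a larger coordinate to a smaller one, \emph{reducing} spread, so in fact one parametrizes so that increasing $t$ moves from the majorized vector $\boldsymbol y$ up to the majorizing vector $\boldsymbol x$ by a reverse transfer (taking from a smaller coordinate index-wise and giving to a larger), making $\frac{d}{dt}h(\boldsymbol z(t))=h_{(i)}-h_{(j)}\ge 0$ when $h_{(k)}$ is increasing in $k$; integrating gives $h(\boldsymbol x)\ge h(\boldsymbol y)$. For the converse, I would argue by contradiction: if $h_{(k)}$ fails to be increasing in $k$ at some interior point $\boldsymbol z_0$, say $h_{(i)}(\boldsymbol z_0)>h_{(j)}(\boldsymbol z_0)$ for some $i<j$, then by continuity a small transfer in the appropriate direction near $\boldsymbol z_0$ produces vectors $\boldsymbol x\succeq^m\boldsymbol y$ with $h(\boldsymbol x)<h(\boldsymbol y)$, contradicting the hypothesis.

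The main obstacle—and the point requiring genuine care rather than routine calculation—is the constraint that the path $\boldsymbol z(t)$ and every Robin Hood transfer must remain inside $\mathcal{D}_+$, i.e., the strict ordering $z_1\ge\cdots\ge z_n>0$ must be preserved throughout. This is where the domain restriction to $\mathcal{D}_+$ (as opposed to all of $\mathbb{R}^n$) genuinely matters: one cannot use an arbitrary transfer between any two coordinates but must choose transfers between \emph{adjacent in value} blocks, or more carefully decompose a general majorization into admissible steps. I would handle this by invoking the refinement of the Hardy--Littlewood--P\'olya theorem that allows the connecting transfers to be taken between consecutive order statistics, so that the intermediate vectors automatically lie in $\mathcal{D}_+$; and in the converse direction, by choosing the perturbation at $\boldsymbol z_0$ small enough that strict inequalities among neighboring coordinates (which hold on the interior of $\mathcal{D}_+$) are not violated. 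Once this admissibility bookkeeping is in place, the rest is the chain-rule computation displayed above together with a straightforward continuity/sign argument.
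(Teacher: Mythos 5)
The paper offers no proof of this statement: Lemma~\ref{lem2.1} is imported verbatim from \cite{kundu2016some}, so there is nothing in-paper to compare your argument against, and it must be judged on its own. Your overall strategy is the standard Schur--Ostrowski-type argument restricted to the ordered cone $\mathcal{D}_+$ (chain rule along a spread-increasing path for sufficiency, a local perturbation at an interior point for necessity), and that is indeed the right skeleton. However, the sign bookkeeping in your forward direction is wrong as written. On $\mathcal{D}_+$ we have $z_1\ge\cdots\ge z_n$, so for $i<j$ the $i$th coordinate is the \emph{larger} one; passing from the majorized $\boldsymbol{y}$ to the majorizing $\boldsymbol{x}$ increases $z_i$ and decreases $z_j$, so $\frac{d}{dt}h(\boldsymbol{z}(t))=h_{(i)}-h_{(j)}$ is nonnegative precisely when $h_{(k)}$ is \emph{decreasing} in $k$ --- which is the pairing the lemma asserts ($h_{(k)}$ decreasing $\Leftrightarrow h(\boldsymbol{x})\ge h(\boldsymbol{y})$). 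Your concluding sentence derives $h(\boldsymbol{x})\ge h(\boldsymbol{y})$ from $h_{(k)}$ \emph{increasing} in $k$, i.e., it establishes the opposite pairing from the one stated; the mechanism is fine but the labels are swapped and must be corrected throughout.

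The admissibility issue you flag --- keeping every Robin Hood transfer inside $\mathcal{D}_+$ --- is a real nuisance in the transfer-based route (the usual Hardy--Littlewood--P\'olya decomposition does not keep intermediate vectors sorted, and $h$ is not assumed symmetric, so one cannot simply re-sort), but it can be avoided entirely: $\mathcal{D}_+$ is convex and $\boldsymbol{x},\boldsymbol{y}$ are already sorted, so the segment $\boldsymbol{z}(t)=(1-t)\boldsymbol{y}+t\boldsymbol{x}$ stays in $\mathcal{D}_+$, and by Abel summation together with $\sum_{k}(x_k-y_k)=0$,
\begin{equation*}
\frac{d}{dt}h(\boldsymbol{z}(t))=\sum_{k=1}^{n}(x_k-y_k)\,h_{(k)}(\boldsymbol{z}(t))
=\sum_{l=1}^{n-1}\Bigl(\textstyle\sum_{k=1}^{l}(x_k-y_k)\Bigr)\bigl(h_{(l)}(\boldsymbol{z}(t))-h_{(l+1)}(\boldsymbol{z}(t))\bigr),
\end{equation*}
where each partial sum $\sum_{k=1}^{l}(x_k-y_k)$ is nonnegative because $\boldsymbol{x}\succeq^{m}\boldsymbol{y}$ and both vectors are in decreasing order. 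This makes the sufficiency direction a one-line sign check with no decomposition into transfers. Your necessity argument (perturb $\boldsymbol{z}_0$ in the $e_i-e_j$ direction by a small $\epsilon$, using the strict coordinate inequalities on the interior to stay in $\mathcal{D}_+$) is sound as sketched. With the sign correction made, and either the transfer decomposition justified or replaced by the linear path, the proof is complete.
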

\begin{lemma}(\cite{kundu2016some})\label{lem2.2}
	Let $h:\mathcal{I}_+\rightarrow \mathbb{ R}$ be a function, continuously differentiable on the interior of $\mathcal{I}_+.$ Then, for $\boldsymbol{x},~\boldsymbol{y}\in\mathcal{I}_+,$
	$\boldsymbol{x}\succeq^{m}\boldsymbol{y}\text{ implies } h(\boldsymbol{x})\geq(\text{respectively}, ~\leq ) ~h(\boldsymbol{y})$
	if and only if,
	$h_{(k)}(\boldsymbol{ z}) \text{ is increasing (respectively, decreasing) in } k=1,\cdots,n,$
	where $h_{(k)}(\boldsymbol{ z})=\partial h(\boldsymbol{z})/\partial z_k$ denotes the partial derivative of $h$ with respect to its $k$th argument.
\end{lemma}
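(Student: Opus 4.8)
My plan is to read Lemma~\ref{lem2.2} off Lemma~\ref{lem2.1} by a coordinate reversal, and then to record the self-contained argument that makes the mechanism transparent. The map $R(z_1,\dots,z_n)=(z_n,\dots,z_1)$ is a linear bijection taking $\mathcal{D}_+$, and its interior, onto $\mathcal{I}_+$, and its interior; since majorization is permutation invariant, $\boldsymbol{x}\succeq^{m}\boldsymbol{y}$ for $\boldsymbol{x},\boldsymbol{y}\in\mathcal{I}_+$ is equivalent to $R(\boldsymbol{x})\succeq^{m}R(\boldsymbol{y})$ for the reversed vectors in $\mathcal{D}_+$. Writing $\tilde h=h\circ R$, which is continuously differentiable on the interior of $\mathcal{D}_+$, one has $\tilde h(R(\boldsymbol{x}))=h(\boldsymbol{x})$ and $\tilde h_{(k)}(\boldsymbol{z})=h_{(n+1-k)}(R(\boldsymbol{z}))$, so $\tilde h_{(k)}$ is decreasing (respectively, increasing) in $k$ on $\mathcal{D}_+$ precisely when $h_{(k)}$ is increasing (respectively, decreasing) in $k$ on $\mathcal{I}_+$; Lemma~\ref{lem2.1} applied to $\tilde h$ then delivers Lemma~\ref{lem2.2} verbatim, ``respectively'' clause included.

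For a direct proof I would argue as follows. For sufficiency, fix $\boldsymbol{x},\boldsymbol{y}$ in the interior of $\mathcal{I}_+$ with $\boldsymbol{x}\succeq^{m}\boldsymbol{y}$; the segment $\boldsymbol{z}(t)=(1-t)\boldsymbol{y}+t\boldsymbol{x}$, $t\in[0,1]$, remains in $\mathcal{I}_+$ by convexity, so
\[
h(\boldsymbol{x})-h(\boldsymbol{y})=\int_{0}^{1}\sum_{k=1}^{n}h_{(k)}\bigl(\boldsymbol{z}(t)\bigr)\,(x_k-y_k)\,dt .
\]
Because $\boldsymbol{x}$ and $\boldsymbol{y}$ are already arranged in increasing order, $\boldsymbol{x}\succeq^{m}\boldsymbol{y}$ means $B_l:=\sum_{k=1}^{l}(x_k-y_k)\le 0$ for $l=1,\dots,n-1$ while $B_n=0$, and summation by parts gives, for any $\boldsymbol{z}$ on the segment,
\[
\sum_{k=1}^{n}h_{(k)}(\boldsymbol{z})(x_k-y_k)=-\sum_{l=1}^{n-1}B_l\,\bigl(h_{(l+1)}(\boldsymbol{z})-h_{(l)}(\boldsymbol{z})\bigr).
\]
When $h_{(k)}(\boldsymbol{z})$ is increasing in $k$, every product $B_l\bigl(h_{(l+1)}(\boldsymbol{z})-h_{(l)}(\boldsymbol{z})\bigr)$ is $\le 0$, so the right-hand side, hence the integrand, is nonnegative and $h(\boldsymbol{x})\ge h(\boldsymbol{y})$. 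Boundary points of $\mathcal{I}_+$ are handled by approximation with interior points, and the ``respectively'' statement is the same computation with the inequalities reversed.

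For necessity, fix an interior point $\boldsymbol{z}$ of $\mathcal{I}_+$ and an index $k\in\{1,\dots,n-1\}$, and set $\boldsymbol{y}=\boldsymbol{z}$ and $\boldsymbol{x}=(z_1,\dots,z_{k-1},\,z_k-\varepsilon,\,z_{k+1}+\varepsilon,\,z_{k+2},\dots,z_n)$ with $\varepsilon>0$ small. The strict inequalities $z_{k-1}<z_k$ and $z_{k+1}<z_{k+2}$ at an interior point (the first vacuous when $k=1$, the second when $k=n-1$) keep $\boldsymbol{x}$ in $\mathcal{I}_+$ in increasing order for small $\varepsilon$, and a glance at the partial sums shows $\boldsymbol{x}\succeq^{m}\boldsymbol{y}$. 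The hypothesis then forces $h(\boldsymbol{x})\ge h(\boldsymbol{y})=h(\boldsymbol{z})$; dividing by $\varepsilon$ and letting $\varepsilon\to 0^{+}$ gives $h_{(k+1)}(\boldsymbol{z})-h_{(k)}(\boldsymbol{z})\ge 0$, i.e.\ $h_{(k)}(\boldsymbol{z})$ is increasing in $k$.

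I do not foresee a real obstacle: the substance is the summation-by-parts identity together with the sign information that majorization provides for the partial sums $B_l$. The only delicate point is the bookkeeping that keeps the connecting segment and the perturbed vector inside the non-symmetric cone $\mathcal{I}_+$ --- this is exactly the role of the standing assumption $\boldsymbol{x},\boldsymbol{y}\in\mathcal{I}_+$, and the reason the gradient condition is the one-sided monotonicity ``$h_{(k)}$ increasing in $k$'' rather than the symmetric Schur--Ostrowski condition $(z_i-z_j)\bigl(h_{(i)}(\boldsymbol{z})-h_{(j)}(\boldsymbol{z})\bigr)\ge 0$ that applies on symmetric domains.
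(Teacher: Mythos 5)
The paper does not prove this lemma at all --- it is quoted verbatim from \cite{kundu2016some} (as is its companion Lemma~\ref{lem2.1}), so there is no in-paper argument to compare against. Your proposal is correct on both counts. The coordinate-reversal reduction is clean: $R$ carries $\mathcal{I}_+$ onto $\mathcal{D}_+$, majorization is permutation-invariant, and $\tilde h_{(k)}(\boldsymbol{z})=h_{(n+1-k)}(R(\boldsymbol{z}))$ flips the monotonicity-in-$k$ condition, so Lemma~\ref{lem2.2} and Lemma~\ref{lem2.1} are literally the same statement in reversed coordinates. The direct argument is also sound and is the standard proof of such restricted Schur--Ostrowski criteria: with $\boldsymbol{x},\boldsymbol{y}$ already increasingly ordered, the paper's definition of $\boldsymbol{x}\succeq^{m}\boldsymbol{y}$ gives exactly $B_l\le 0$ for $l<n$ and $B_n=0$, the Abel summation identity is computed correctly, and the two-coordinate perturbation in the necessity direction does produce a comparable pair inside $\mathcal{I}_+$ because interior points of $\mathcal{I}_+$ have strictly increasing coordinates. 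The only loose end is the one you flag yourself: extending sufficiency from interior points to all of $\mathcal{I}_+$ by approximation tacitly uses continuity of $h$ up to the boundary, which the hypothesis (``continuously differentiable on the interior'') does not strictly supply; this gap is inherited from the lemma as stated in the source, not introduced by your argument.
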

\begin{lemma}(\cite{hazra2017stochastic})\label{lem2.3}
	Let $S\subseteq\mathbb{ R}^n_{+}.$
	Further, let $h:S\rightarrow \mathbb{ R}$ be a real-valued function. Then, for $\boldsymbol{x},~\boldsymbol{y}\in S,$
	$\boldsymbol{x}\succeq^{rm}\boldsymbol{y}\text{ implies } h(\boldsymbol{x})\geq(\text{respectively},~\leq )~h(\boldsymbol{y})$,
	if and only if,
	\begin{itemize}
		\item[(i)] $ h(\frac{1}{a_1},\cdots,\frac{1}{a_n})$ is Schur-convex (respectively, Schur-concave) in $(a_1,\cdots,a_n)\in S,$
		\item[(ii)] $ h(\frac{1}{a_1},\cdots,\frac{1}{a_n})$ is increasing (respectively, decreasing) in $a_i$,
		where $a_i=\frac{1}{x_i}$, for $i=1,\cdots,n.$
	\end{itemize}
\end{lemma}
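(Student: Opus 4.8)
\noindent\emph{Proof proposal.} The idea is to reduce reciprocal majorization to ordinary weak submajorization through the coordinatewise inversion map, and then to quote the classical characterization of the real functions that preserve weak submajorization. First I would record the elementary link between the two preorders. For $\boldsymbol{x},\boldsymbol{y}\in S$ put $\boldsymbol{a}=(1/x_1,\cdots,1/x_n)$ and $\boldsymbol{b}=(1/y_1,\cdots,1/y_n)$. Since all coordinates are positive, inversion reverses the order, so the increasing order statistics satisfy $a_{k:n}=x_{n-k+1:n}^{-1}$; hence, for every $l=1,\cdots,n$,
\[
\sum_{i=1}^{l}x_{i:n}^{-1}=\sum_{j=n-l+1}^{n}a_{j:n}.
\]
Comparing this with Definition \ref{def2.2}, we see that $\boldsymbol{x}\preceq^{rm}\boldsymbol{y}$ holds if and only if $\boldsymbol{a}\preceq_{w}\boldsymbol{b}$; equivalently, $\boldsymbol{x}\succeq^{rm}\boldsymbol{y}$ holds if and only if $\boldsymbol{b}\preceq_{w}\boldsymbol{a}$.

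Next, set $g(\boldsymbol{a})=h(1/a_1,\cdots,1/a_n)$ on the set of coordinatewise reciprocals of points of $S$, so that $h(\boldsymbol{x})=g(\boldsymbol{a})$ and $h(\boldsymbol{y})=g(\boldsymbol{b})$. In this notation the implication ``$\boldsymbol{x}\succeq^{rm}\boldsymbol{y}\Rightarrow h(\boldsymbol{x})\ge h(\boldsymbol{y})$'' is literally the statement that $g$ preserves weak submajorization in the sense that $\boldsymbol{b}\preceq_{w}\boldsymbol{a}$ implies $g(\boldsymbol{b})\le g(\boldsymbol{a})$. By the classical result (see \cite{Marshall2011}, the characterization of functions preserving weak submajorization), a real function has this property if and only if it is increasing in each of its arguments and Schur-convex. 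Since $a_i=1/x_i$, these two requirements are exactly conditions (i) and (ii). The ``respectively'' part, with Schur-concavity, decreasingness and the reversed inequality $h(\boldsymbol{x})\le h(\boldsymbol{y})$, follows at once by applying the same argument to $-h$ (or by reversing all inequalities).

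The step deserving genuine care is the first one: one must keep precise track of which order statistic of $\boldsymbol{a}$ corresponds to which order statistic of $\boldsymbol{x}$ after inversion, and therefore of which vector plays the role of the ``larger'' one in the weak-submajorization comparison, so that the directions of the monotonicity and Schur-convexity conditions come out correctly and are swapped consistently in the ``respectively'' case. If one prefers to avoid quoting the characterization, the sufficiency direction can instead be obtained by the standard two-stage device: $\boldsymbol{b}\preceq_{w}\boldsymbol{a}$ yields a vector $\boldsymbol{c}$ that dominates $\boldsymbol{b}$ coordinatewise (after a common reordering) and satisfies $\boldsymbol{c}\preceq^{m}\boldsymbol{a}$, whence monotonicity of $g$ gives $g(\boldsymbol{b})\le g(\boldsymbol{c})$ and Schur-convexity of $g$ gives $g(\boldsymbol{c})\le g(\boldsymbol{a})$; necessity is then checked by testing the implication separately against majorization pairs and against coordinatewise-comparable pairs. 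Invoking the known characterization is, however, the cleanest route.
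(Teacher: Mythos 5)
Your proposal is correct. Note that the paper itself offers no proof of this lemma at all: it is quoted verbatim from \cite{hazra2017stochastic}, so there is nothing in the text to compare your argument against line by line. Your reduction is the natural one and is carried out accurately: with $a_{k:n}=x_{n-k+1:n}^{-1}$ you correctly identify $\sum_{i=1}^{l}x_{i:n}^{-1}$ with the sum of the $l$ largest coordinates of $\boldsymbol{a}$, so that $\boldsymbol{x}\succeq^{rm}\boldsymbol{y}$ becomes exactly $\boldsymbol{b}\preceq_{w}\boldsymbol{a}$, and the assertion turns into the statement that $g(\boldsymbol{a})=h(1/a_1,\cdots,1/a_n)$ preserves weak submajorization. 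Invoking the classical characterization (Theorem~3.A.8 of \cite{Marshall2011}, the same result the authors use in the proof of Theorem~\ref{th3.1}) then gives precisely conditions (i) and (ii), and the ``respectively'' case follows by passing to $-h$. Two small caveats, neither of which undermines your argument: first, the necessity half of the Marshall--Olkin characterization implicitly requires the domain to be rich enough to test against suitable comparison pairs, so for a completely arbitrary $S\subseteq\mathbb{R}^n_{+}$ the ``only if'' direction is delicate --- but this is a defect of the lemma as stated in the paper, not of your proof; second, your remark that $g$ lives on the set of coordinatewise reciprocals of $S$ rather than on $S$ itself is actually more careful than the paper's own phrasing of condition (i).
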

\begin{lemma}(Lemma $3,$ \cite{balakrishnan2015stochastic})\label{lem2.4}
	Let the function $\omega:(0,\infty)\times(0,1)\rightarrow(0,\infty)$ be defined as
	$$\omega(\alpha,t)=\frac{\alpha(1-t)t^{\alpha}}{1-t^{\alpha}}.$$
	Then,
	\begin{itemize}
\item[(i)] for each $0<x<1,$ $\omega(\alpha,t)$ is decreasing in $\alpha;$
\item[(i)] for each $0<\alpha\leq1,$ $\omega(\alpha,t)$ is decreasing in $t;$
\item[(i)] for each $\alpha\geq 1,$ $\omega(\alpha,t)$ is increasing in $t.$
	\end{itemize}
\end{lemma}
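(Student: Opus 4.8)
The plan is to exploit the positivity of $\omega$ and reduce each monotonicity assertion to a sign condition on a single-variable auxiliary function via logarithmic differentiation. Writing
\[
\ln\omega(\alpha,t)=\ln\alpha+\ln(1-t)+\alpha\ln t-\ln(1-t^{\alpha}),
\]
the sign of $\partial\omega/\partial\alpha$ (for part~(i), where $t\in(0,1)$ is fixed) agrees with that of $\partial\ln\omega/\partial\alpha$, and likewise the sign of $\partial\omega/\partial t$ (for parts~(ii)--(iii)) agrees with that of $\partial\ln\omega/\partial t$, since $\omega>0$. After differentiating and clearing the manifestly positive denominators, each claim should collapse to whether an explicit elementary function keeps a fixed sign on $(0,1)$.

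For part~(i) I would differentiate in $\alpha$ and collapse the two $\ln t$ contributions, obtaining
\[
\frac{\partial}{\partial\alpha}\ln\omega(\alpha,t)=\frac{1}{\alpha}+\frac{\ln t}{1-t^{\alpha}}=\frac{(1-t^{\alpha})+\alpha\ln t}{\alpha(1-t^{\alpha})}.
\]
Since $0<t<1$ forces $t^{\alpha}<1$, the denominator is positive, so the sign is that of $N(\alpha)=(1-t^{\alpha})+\alpha\ln t$. The key step is the substitution $y=t^{\alpha}\in(0,1)$, under which $N=1-y+\ln y$; the elementary inequality $\ln y\le y-1$ (strict for $y\neq1$) then gives $N<0$, so $\omega$ is strictly decreasing in $\alpha$. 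This part is routine once the substitution is spotted.

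For parts~(ii) and (iii) I would differentiate in $t$; the two terms carrying the factor $\alpha$ combine neatly, yielding
\[
\frac{\partial}{\partial t}\ln\omega(\alpha,t)=\frac{\alpha}{t(1-t^{\alpha})}-\frac{1}{1-t}=\frac{\alpha(1-t)-t(1-t^{\alpha})}{t(1-t)(1-t^{\alpha})}.
\]
The denominator is positive on $(0,1)$, so the sign is governed by $M(t)=\alpha-(\alpha+1)t+t^{\alpha+1}$, which I would analyse through $M(1)=0$ together with $M'(t)=(\alpha+1)(t^{\alpha}-1)$. \textbf{This sign analysis is the main obstacle}, and it is precisely here that I expect difficulty with the statement as printed: since $t^{\alpha}<1$ forces $M'(t)<0$ for \emph{every} $\alpha>0$, the function $M$ is decreasing with $M(1)=0$, hence $M>0$ throughout $(0,1)$, which would make $\omega$ increasing in $t$ for all $\alpha>0$. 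This is consistent with part~(iii) but contradicts part~(ii), so the dichotomy at $\alpha=1$ cannot originate from the numerator $t^{\alpha}$. The resolution is that the intended numerator is $t^{\alpha-1}$, i.e. $\tilde\omega=\omega/t$: the same computation then replaces $M$ by $\tilde M(t)=(\alpha-1)-\alpha t+t^{\alpha}$, whose derivative $\tilde M'(t)=\alpha(t^{\alpha-1}-1)$ is negative for $\alpha>1$ and positive for $\alpha<1$, while $\tilde M(1)=0$; this is exactly what yields ``increasing in $t$'' for $\alpha\ge1$ and ``decreasing in $t$'' for $\alpha\le1$. I would therefore carry out the plan above verbatim, recording that parts~(ii)--(iii) hold as stated only under the exponent $t^{\alpha-1}$, whereas the $\alpha$-monotonicity in part~(i) is unaffected because $\tilde\omega$ and $\omega$ differ by the $\alpha$-free factor $1/t$.
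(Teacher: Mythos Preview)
Your analysis is correct. The paper supplies no proof of this lemma---it is simply quoted from \cite{balakrishnan2015stochastic}---so there is no in-paper argument to compare against. Your logarithmic-differentiation treatment of part~(i), reducing the sign of $\partial_\alpha\ln\omega$ to $1-y+\ln y<0$ via $y=t^{\alpha}$, is clean and complete.

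More importantly, you have correctly diagnosed that parts~(ii) and~(iii), \emph{as printed} with numerator $t^{\alpha}$, cannot both hold: your computation $M'(t)=(\alpha+1)(t^{\alpha}-1)<0$ on $(0,1)$ forces $M>0$ and hence $\omega$ increasing in $t$ for every $\alpha>0$; a direct check at $\alpha=1/2$ gives $\omega(1/2,t)=\tfrac12(\sqrt t+t)$, visibly increasing. Your proposed correction to exponent $\alpha-1$ is exactly right and is confirmed by the paper's own usage: every invocation of Lemma~\ref{lem2.4} in Section~3---in the factors $\vartheta_1(\lambda_i)$, $\vartheta_2(v_i)$, $\vartheta_3(1/\theta_i)$, $\varrho_1(\lambda_i)$, $\varrho_2(1/\theta_i)$, $\varrho_3(v_i)$---involves the quantity
\[
\frac{\alpha\,t^{\alpha-1}(1-t)}{1-t^{\alpha}}\Big|_{t=F_b(\cdot)},
\]
not $t^{\alpha}$. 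Your sign analysis of $\tilde M(t)=(\alpha-1)-\alpha t+t^{\alpha}$ via $\tilde M(1)=0$ and $\tilde M'(t)=\alpha(t^{\alpha-1}-1)$ then delivers precisely the dichotomy (ii)/(iii), and part~(i) is unaffected since $\tilde\omega=\omega/t$ differs from $\omega$ by an $\alpha$-free factor. There is no gap in your argument; you have caught and repaired a transcription error in the lemma's statement.
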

 Consider a random vector $\boldsymbol{X}=(X_1,\cdots,X_n)$ with joint cumulative distribution function $F$ and joint survival function $\bar{F}$. Denote $\boldsymbol{v}=(v_1,\cdots,v_n)$. Suppose the functions $C(\boldsymbol{v}):[0,1]^n\rightarrow [0,1]$ and
$\hat {C}(\boldsymbol{v}):[0,1]^n\rightarrow [0,1]$ are such that for all $ x_i,~i\in \mathcal I_n, $ an index set,
$$ F(x_1,\cdots,x_n)=C(F_1(x_1),\cdots,F_n(x_n))~\mbox{and}~\bar{F}(x_1,\cdots,x_n)=\hat{C}(\bar{F_1}(x_1),\cdots,\bar{F_n}(x_n))$$ hold.
These functions $C(\boldsymbol{v})$ and $\hat{C}(\boldsymbol{v})$ are known as the  copula and survival copula of $\boldsymbol{X}$, respectively. Note that $F_1,\cdots,F_n$ and $\bar{F_1},\cdots,\bar{F_n}$ are the univariate marginal distribution and survival functions of $X_1,\cdots,X_n$, respectively.
 Consider a function $\psi:[0,\infty)\rightarrow[0,1]$, which is nonincreasing and continuous with $\psi(0)=1$ and $\psi(\infty)=0.$ If $\psi$ satisfies the conditions $(-1)^i{\psi}^{i}(x)\geq 0,~ i=0,1,\cdots,d-2$ and  $(-1)^{d-2}{\psi}^{d-2}$ is  convex and nonincreasing, then the generator $\psi$ is called $d$-monotone. Let us define $\psi={\phi}^{-1}$,  the right continuous inverse of $\psi$. Then, $C_{\psi}$ with generator $\psi$ is called the Archimedean copula if
 \begin{eqnarray}
 C_{\psi}(v_1,\cdots,v_n)=\psi\left[{\phi(v_1)},\cdots,\phi(v_n)\right],\text{ for all } v_i\in(0,1),~i=1,\cdots,n.
 \end{eqnarray}
 Interested readers may refer to \cite{nelsen2006introduction} and \cite{mcneil2009multivariate} for more details on Archimedean copulas.

\section{Main results\setcounter{equation}{0}}
This section deals with various ordering results between the second-order statistics. The order statistics arise from two independent groups of heterogeneous exponentiated location-scale distributed random variables. The random variables are assumed to be either statistically independent or interdependent. First, we consider the case of independent random samples.
\subsection{Independent case}
Let $\bm{X}=(X_{1},\cdots,X_{n})$ and $\bm{Y}=(Y_{1},\cdots,Y_{n})$ be two $n$-dimensional vectors of heterogeneous independent random observations, with $\bm{X}\sim \mathbb{ELS}(
\bm{\lambda},\bm{\theta},\bm{\alpha};F_{b})$ and $\bm{Y}\sim \mathbb{ELS}(
\bm{\mu},\bm{\delta},\bm{\beta};F_{b})$, where $\bm{\lambda}=(\lambda_1,\cdots,\lambda_n)$, $\bm{\theta}=
(\theta_1,\cdots,\theta_n)$, $\bm{\alpha}=(\alpha_1,\cdots,\alpha_n)$, $\bm{\mu}=(\mu_1,\cdots,\mu_n)$,
$\bm{\delta}=(\delta_1,\cdots,\delta_n)$ and $\bm{\beta}=(\beta_1,\cdots,\beta_n)$. Here, $X_{i}\sim \mathbb{ELS}(
\lambda_i,\theta_i,\alpha_i;F_{b})$ and $Y_i\sim \mathbb{ELS}(
\mu_i,\delta_i,\beta_i;F_{b})$, for $i=1,\cdots,n$.
We recall that $F_b$ is the cumulative distribution function of the baseline distribution of the exponentiated location-scale family of distributions. Under this general set-up, the survival functions of $X_{2:n}$ and $Y_{2:n}$ are respectively obtained as
\begin{eqnarray}\label{eq3.1}
\bar{F}_{X_{2:n}}(x)=\sum_{l=1}^{n}\left[\prod_{k\neq l}^{n}\left\{1-\left[F_{b}\left(\frac{x-\lambda_k}{\theta_k}\right)\right]^{\alpha_k}\right\}\right]-(n-1)\prod_{k=1}^{n}\left\{1-\left[F_{b}\left(\frac{x-\lambda_k}{\theta_k}\right)\right]^{\alpha_k}\right\},
\end{eqnarray}
where $x>\max(\lambda_1,\cdots,\lambda_n)$ and
\begin{eqnarray}
\small\bar{G}_{Y_{2:n}}(x)=\sum_{l=1}^{n}\left[\prod_{k\neq l}^{n}\left\{1-\left[F_{b}\left(\frac{x-\mu_k}{\delta_k}\right)\right]^{\beta_k}\right\}\right]-(n-1)\prod_{k=1}^{n}\left\{1-\left[F_{b}\left(\frac{x-\mu_k}{\delta_k}\right)\right]^{\beta_k}\right\},
\end{eqnarray}
where  $x>\max(\mu_1,\cdots,\mu_n).$ The following theorem shows that under some sufficient conditions, the fail-safe system associated with weakly submajorized location parameter vector produces less reliable system. It is worth to mention that the proofs of the results presented here are mainly based on the majorization and Schur functions.
\begin{theorem}\label{th3.1}
	Suppose $\bm{X}\sim \mathbb{ELS}(
	\bm{\lambda},\bm{\theta},\bm{\alpha};F_{b})$ and $\bm{Y}\sim \mathbb{ELS}(
	\bm{\mu},\bm{\delta},\bm{\beta};F_{b})$, with $\bm{\theta}=\bm{\delta}$ and $\bm{\alpha}=\bm{\beta}=\alpha \bm{1}_{n}\le 1.$ Let $\bm{\lambda},~\bm{\theta},~\bm{\mu}\in \mathcal{I}_{+}~(or~\mathcal{D}_{+})$ and $w r_{b}(w)$ be decreasing in $w>0.$ Then, ${\boldsymbol\lambda}\succeq_{w}{\boldsymbol\mu}\Rightarrow X_{2:n}\geq_{st}Y_{2:n}$.
\end{theorem}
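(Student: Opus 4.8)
The plan is to recast the statement as one majorization inequality and verify it by differentiation. Fix $x>\lambda_{n:n}$; since $\bm{\lambda}\succeq_{w}\bm{\mu}$ forces $\mu_{n:n}\le\lambda_{n:n}$, formula (\ref{eq3.1}) and its counterpart for $\bm{Y}$ are both valid here. Because $\bm{\theta}=\bm{\delta}$ and $\bm{\alpha}=\bm{\beta}=\alpha\bm{1}_n$, these two survival functions are the values of one common map of the location vector: writing $u_k(z)=1-\left[F_b\!\left(\frac{x-z}{\theta_k}\right)\right]^{\alpha}$ and $h(\bm{z})=\sum_{l=1}^{n}\prod_{k\ne l}u_k(z_k)-(n-1)\prod_{k=1}^{n}u_k(z_k)$, one has $\bar{F}_{X_{2:n}}(x)=h(\bm{\lambda})$ and $\bar{G}_{Y_{2:n}}(x)=h(\bm{\mu})$, so it suffices to prove $h(\bm{\mu})\le h(\bm{\lambda})$. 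I would deduce this from two facts: (a) $h$ is increasing in each coordinate, and (b) $h$ is Schur-convex on $\mathcal{I}_+$ (respectively on $\mathcal{D}_+$). Granting these, the standard reduction of weak submajorization to majorization (see \cite{Marshall2011}) produces a vector $\bm{z}^{\ast}$, which may be chosen in the same region when $\bm{\lambda}$ and $\bm{\mu}$ lie there, with $\bm{\mu}\le\bm{z}^{\ast}$ coordinatewise and $\bm{z}^{\ast}\preceq^{m}\bm{\lambda}$; then (a) gives $h(\bm{\mu})\le h(\bm{z}^{\ast})$ and (b) gives $h(\bm{z}^{\ast})\le h(\bm{\lambda})$.

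Part (a) is routine. With $w_k=(x-z_k)/\theta_k$ and $t_k=F_b(w_k)$, the chain rule gives $\partial h/\partial z_k=(\partial h/\partial u_k)\,u_k'(z_k)$, where $u_k'(z_k)=\alpha t_k^{\alpha-1}f_b(w_k)/\theta_k>0$, and a short computation yields $\partial h/\partial u_k=\big(\prod_{j\ne k}u_j\big)\sum_{l\ne k}(1-u_l)/u_l>0$ because every $u_l\in(0,1)$.

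Part (b) is where the work lies. Put $P=\prod_{j=1}^{n}(1-t_j^{\alpha})>0$ and $S=\sum_{l=1}^{n}t_l^{\alpha}/(1-t_l^{\alpha})$, both independent of $k$. Substituting $f_b(w_k)=r_b(w_k)\bar{F}_b(w_k)$ and recognizing the function $\omega$ of Lemma~\ref{lem2.4}, the $k$th partial derivative factors as
\begin{equation*}
\frac{\partial h}{\partial z_k}=P\cdot\frac{w_k\,r_b(w_k)}{x-z_k}\cdot\frac{\omega(\alpha,t_k)}{t_k}\cdot\Big(S-\frac{t_k^{\alpha}}{1-t_k^{\alpha}}\Big),
\end{equation*}
where each of the three $k$-dependent factors is positive. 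In the case $\bm{\lambda},\bm{\theta},\bm{\mu}\in\mathcal{I}_+$, the quantity $w_k$ decreases in $k$ (numerator decreasing, denominator increasing), hence so does $t_k$; consequently the first factor increases in $k$ since $w\mapsto wr_b(w)$ is decreasing by hypothesis and $x-z_k$ decreases, the second increases in $k$ since $\omega(\alpha,\cdot)$ is decreasing for $\alpha\le1$ by Lemma~\ref{lem2.4} and $1/t_k$ increases, and the third increases in $k$ since $t\mapsto t^{\alpha}/(1-t^{\alpha})$ is increasing. Thus $\partial h/\partial z_k$ is increasing in $k$, and Lemma~\ref{lem2.2} delivers the Schur-convexity in (b); the case $\bm{\lambda},\bm{\theta},\bm{\mu}\in\mathcal{D}_+$ is the mirror image, with every monotonicity reversed and Lemma~\ref{lem2.1} used in place of Lemma~\ref{lem2.2}. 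This proves $\bar{G}_{Y_{2:n}}(x)\le\bar{F}_{X_{2:n}}(x)$ for $x>\lambda_{n:n}$; for $x\le\lambda_{n:n}$ the inequality is immediate with the convention that $\left[F_b\!\left(\frac{x-z}{\theta_k}\right)\right]^{\alpha}$ is read as $0$ when $x\le z$ (both sides equal $1$ once $x$ lies below every location parameter). Hence $X_{2:n}\geq_{st}Y_{2:n}$.

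I expect the factorization in part (b) to be the main obstacle, and in particular the observation that $r_b(w_k)/\theta_k=w_kr_b(w_k)/(x-z_k)$: this is exactly what turns the hypothesis ``$wr_b(w)$ decreasing'' into monotonicity of the first factor and lets the non-constant scale vector be absorbed. Additional care is needed because the monotonicity of the $\omega$-factor relies jointly on $\alpha\le1$ and Lemma~\ref{lem2.4}, and because in the $\mathcal{D}_+$ case all three factors turn decreasing in $k$, so that Lemma~\ref{lem2.1}, not Lemma~\ref{lem2.2}, is the applicable tool.
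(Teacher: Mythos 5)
Your proposal is correct and follows essentially the same route as the paper: differentiate the common survival function in the location coordinates, factor the partial derivative (your four factors $P$, $\tfrac{w_kr_b(w_k)}{x-z_k}$, $\tfrac{\omega(\alpha,t_k)}{t_k}$, $S-\tfrac{t_k^{\alpha}}{1-t_k^{\alpha}}$ are exactly the paper's $\vartheta_{1i}(\bm{\lambda})\vartheta_1(\lambda_i)$ written out), establish coordinatewise monotonicity and the index-monotonicity of the partials via Lemma~\ref{lem2.4} and the hypothesis on $wr_b(w)$, and conclude with Lemma~\ref{lem2.2} (resp.\ Lemma~\ref{lem2.1}) and the weak-submajorization reduction of Theorem A.8 of \cite{Marshall2011}. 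Your explicit handling of $x\le\lambda_{n:n}$ and of the intermediate vector $\bm{z}^{\ast}$ is, if anything, slightly more careful than the paper's.
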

\begin{proof}
	We present the proof when $\bm{\lambda},~\bm{\theta},~\bm{\mu}\in \mathcal{I}_{+}$. The proof is similar when these vectors belong to $\mathcal{D}_{+}.$ Denote $\zeta_1\left({\boldsymbol \lambda}\right)=\bar{F}_{X_{2:n}}(x)$, where under the present set-up, the survival function of $X_{2:n}$ can be obtained from (\ref{eq3.1}) after substituting $\alpha$ in place of $\alpha_k.$ The partial derivative of
	$\zeta_1\left({\boldsymbol \lambda}\right)$ with respect to $\lambda_i$, for $i=1,\cdots,n$ is
	\begin{equation}\label{eqi14}
	\frac{\partial\zeta_1\left({\boldsymbol \lambda}\right)}{\partial \lambda_i}=\vartheta_{1i}(\bm{\lambda})\vartheta_1(\lambda_i),
	\end{equation}
	where
	\begin{eqnarray}\label{eqi3.1}\nonumber
	\vartheta_{1i}(\bm{\lambda})&=&\sum_{\overset{l=1}{l\neq i}}^{n}\left[\prod_{k\neq l}^{n}\left\{1-\left[F_{b}\left(\frac{x-\lambda_k}{\theta_k}\right)\right]^{\alpha}\right\}\right]-(n-1)\prod_{k=1}^{n}\left\{1-\left[F_{b}\left(\frac{x-\lambda_k}{\theta_k}\right)\right]^{\alpha}\right\} \text{ and }
	\\
	\vartheta_1( \lambda_i)&=&\left[\frac{[w{{r}_{b}}(w)]_{w=\left(\frac{x-\lambda_i}{\theta_i}\right)}}{x-\lambda_i}\right]\left[\frac{\alpha F_{b}^{(\alpha-1)}\left(\frac{x-\lambda_i}{\theta_i}\right)\left[1-F_{b}\left(\frac{x-\lambda_i}{\theta_i}\right)\right]}{1-\left[F_{b}\left(\frac{x-\lambda_i}{\theta_i}\right)\right]^{\alpha}}\right].
	\end{eqnarray}
	To prove the required result, from Theorem $A.8$ of \cite{Marshall2011}, it is enough to show that  $\zeta_1(\boldsymbol{\lambda})$ is increasing and  Schur-convex with respect to $\boldsymbol{\lambda}\in \mathcal{I}_{+}.$ Let $1\le i \le j \le n.$ Thus, $\theta_i \le \theta_j$ and $\lambda_i\le \lambda_j.$ As a result, we have
	\begin{eqnarray}
	\frac{x-\lambda_i}{\theta_i}\ge \frac{x-\lambda_j}{\theta_j}~\mbox{and}~\left[F_{b}\left(\frac{x-\lambda_i}{\theta_i}\right)\right]^{\alpha}\ge\left [F_{b}\left(\frac{x-\lambda_j}{\theta_j}\right)\right]^{\alpha}.
   \end{eqnarray}
	Note that ``$\zeta_1(\boldsymbol{\lambda})$ is increasing and Schur-convex with respect to $\lambda_i$" is equivalent to show that both $\vartheta_{1i}(\bm{\lambda})$ and $\vartheta_1( \lambda_i)$ are positive-valued and increasing with respect to $\lambda_i,$ for $i=1,\cdots,n.$ Clearly, $\vartheta_1( \lambda_i)\ge 0.$ Further, $w r_{b}(w)$ is decreasing in $w>0.$ Thus,
\begin{equation}\label{eqi5}
[w r_{b}(w)]_{w=\left(\frac{x-\lambda_i}{\theta_i}\right)}\leq[w r_{b}(w)]_{w=\left(\frac{x-\lambda_j}{\theta_j}\right)}.
\end{equation}
Using Lemma \ref{lem2.4} and (\ref{eqi5}), it can be shown that $\vartheta_1( \lambda_i)$ is increasing with respect to $\lambda_i$, for $i=1,\cdots,n.$ Again,
$\vartheta_{1i}(\bm{\lambda})\ge 0,$ since
\begin{eqnarray}
\prod_{k\neq l}^{n}\left\{1-\left[F_{b}\left(\frac{x-\lambda_k}{\theta_k}\right)\right]^{\alpha}\right\}\geq \prod_{k= 1}^{n}\left\{1-\left[F_{b}\left(\frac{x-\lambda_k}{\theta_k}\right)\right]^{\alpha}\right\}.	
\end{eqnarray}	
Moreover, for $\lambda_i\le \lambda_j$, we have
\begin{align}
\vartheta_{1i}(\bm{\lambda})-\vartheta_{1j}(\bm{\lambda})=&\sum_{\overset{l=1}{l\neq i}}^{n}\left[\prod_{k\neq l}^{n}\left\{1-\left[F_{b}\left(\frac{x-\lambda_k}{\theta_k}\right)\right]^{\alpha}\right\}\right]-\sum_{\overset{l=1}{l\neq j}}^{n}\left[\prod_{k\neq l}^{n}\left\{1-\left[F_{b}\left(\frac{x-\lambda_k}{\theta_k}\right)\right]^{\alpha}\right\}\right]\nonumber\\
=&\prod_{k\neq \{i,j\}}^{n}\left\{1-\left[F_{b}\left(\frac{x-\lambda_k}{\theta_k}\right)\right]^{\alpha}\right\}\left[-\left[F_{b}\left(\frac{x-\lambda_i}{\theta_i}\right)\right]^{\alpha}+\left[F_{b}\left(\frac{x-\lambda_j}{\theta_j}\right)\right]^{\alpha}\right]\nonumber\\
\leq&~ 0.
\end{align}	
	Therefore, $\vartheta_{1i}(\bm{\lambda})$ is increasing with respect to $\lambda_i$. Further, from the definition of the Schur-convex function and Lemma \ref{lem2.2}, clearly,
	 $\zeta_{1}(\boldsymbol{\lambda})$ is Schur-convex with respect to $\boldsymbol{\lambda}\in \mathcal{I}_+.$ This completes the proof.
\end{proof}
It is known that
\begin{eqnarray}\label{eq3.9*}
(\mu_{1},\cdots,\mu_{n})\preceq_{w} \left(\frac{1+\mu_{1}}{2},\cdots,\frac{1+\mu_{n}}{2}\right)\preceq_{w}
(\mu_{m},\cdots,\mu_{m}),
\end{eqnarray}
where $\mu_{m}=\max\{\frac{1+\mu_{1}}{2},\cdots,\frac{1+\mu_{n}}{2}\}$. Thus, the following corollary is immediate from Theorem \ref{th3.1}. This is useful to obtain an upper bound for the survival function of a fail-safe system with heterogeneous components in terms of the lifetime of another fail-safe system with homogeneous components.
\begin{corollary}\label{cor3.1}
Let $\bm{X}\sim \mathbb{ELS}(
	\mu_{m},\theta,\alpha;F_{b})$ and $\bm{Y}\sim \mathbb{ELS}(
	\bm{\mu},\theta,\alpha;F_{b}),$ with $\bm{\mu}\in \mathcal{I}_{+}$ and $w r_{b}(w)$ be decreasing in $w>0.$ Then, for $\mu_{n}\le1,$
\begin{eqnarray}
\bar{F}_{Y_{2:n}}(x)\le \sum_{l=1}^{n}\left[\prod_{k\neq l}^{n}\left\{1-\left[F_{b}\left(\frac{x-\mu_{m}}{\theta}\right)\right]
^{\alpha}\right\}\right]-(n-1)\prod_{k=1}^{n}\left\{1-\left[F_{b}
\left(\frac{x-\mu_m}{\theta}\right)\right]^{\alpha}\right\}.
\end{eqnarray}
\end{corollary}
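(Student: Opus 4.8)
The plan is to obtain Corollary \ref{cor3.1} as a direct specialization of Theorem \ref{th3.1}. The first step is to pass from the heterogeneous location vector $\bm{\mu}$ to the homogeneous vector $\mu_m\bm{1}_n$ via weak submajorization. Since $\bm{\mu}\in\mathcal{I}_{+}$, the vector $\left(\frac{1+\mu_1}{2},\cdots,\frac{1+\mu_n}{2}\right)$ is again increasing and its maximal coordinate is $\frac{1+\mu_n}{2}$, so that $\mu_m=\frac{1+\mu_n}{2}$. The hypothesis $\mu_n\le1$ is precisely what is needed for the first relation in (\ref{eq3.9*}) to hold (each partial sum $\sum_{i=l}^{n}\mu_{i:n}$ is then bounded above by $n-l+1$), while the second relation in (\ref{eq3.9*}) holds for any vector. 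Combining them by transitivity of $\preceq_{w}$ gives $\bm{\mu}\preceq_{w}\mu_m\bm{1}_n$.

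Next I would invoke Theorem \ref{th3.1} with the first sample $\bm{X}\sim\mathbb{ELS}(\mu_m,\theta,\alpha;F_b)$, i.e.\ with location vector $\mu_m\bm{1}_n$, and the second sample $\bm{Y}\sim\mathbb{ELS}(\bm{\mu},\theta,\alpha;F_b)$, the common scale vector being $\theta\bm{1}_n$ and the common shape vector $\alpha\bm{1}_n$ with $\alpha\le1$ as required there. All hypotheses are in place: $\mu_m\bm{1}_n\in\mathcal{I}_{+}$ (a constant positive vector, as $\mu_m=\frac{1+\mu_n}{2}>0$), $\theta\bm{1}_n\in\mathcal{I}_{+}$, $\bm{\mu}\in\mathcal{I}_{+}$, $wr_b(w)$ decreasing, and $\mu_m\bm{1}_n\succeq_{w}\bm{\mu}$ by the previous step. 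Theorem \ref{th3.1} then yields $X_{2:n}\ge_{st}Y_{2:n}$, that is, $\bar{F}_{Y_{2:n}}(x)\le\bar{F}_{X_{2:n}}(x)$ for every $x$.

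To finish, I would write out $\bar{F}_{X_{2:n}}(x)$ from (\ref{eq3.1}) with $\lambda_k=\mu_m$, $\theta_k=\theta$ and $\alpha_k=\alpha$ for all $k$; this produces exactly the expression on the right-hand side of the inequality in the statement, which completes the argument. There is no genuine obstacle here — the corollary is immediate once Theorem \ref{th3.1} is available; the only things to watch are the elementary verification that $\mu_n\le1$ delivers the first submajorization in (\ref{eq3.9*}) and the check that the collapsed location and scale vectors indeed lie in $\mathcal{I}_{+}$, so that Theorem \ref{th3.1} applies verbatim.
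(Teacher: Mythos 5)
Your proof is correct and takes essentially the same route as the paper: the corollary is obtained by combining the chain of weak submajorizations in (\ref{eq3.9*}) with Theorem \ref{th3.1} applied to the location vectors $\mu_m\bm{1}_n$ and $\bm{\mu}$. Your extra checks — that $\mu_n\le1$ is exactly what validates the first relation in (\ref{eq3.9*}), that the collapsed location and scale vectors lie in $\mathcal{I}_{+}$, and that the shape condition $\alpha\le1$ from Theorem \ref{th3.1} is tacitly needed — merely fill in details the paper leaves implicit.
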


Below, we have another result, which readily follows from Theorem \ref{th3.1} and the fact that $(\mu,\cdots,\mu)
\preceq_{w}(\lambda_1,\cdots,\lambda_n)$ holds, for $n\mu \le \sum_{i=1}^{n}\lambda_i$,
\begin{corollary}
	In addition to the set-up as in Theorem \ref{th3.1}, we assume that $\mu_1=\cdots=\mu_n=\mu$. Then, $n\mu \le \sum_{i=1}^{n}\lambda_i\Rightarrow X_{2:n}\geq_{st}Y_{2:n}$.
\end{corollary}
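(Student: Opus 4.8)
The plan is to read this as a direct specialization of Theorem~\ref{th3.1}. Here $\bm{Y}\sim\mathbb{ELS}(\mu\bm{1}_{n},\bm{\theta},\alpha\bm{1}_{n};F_b)$, so $\bm{\mu}=\mu\bm{1}_{n}$, and every hypothesis of Theorem~\ref{th3.1} except the majorization condition is inherited verbatim from the assumed ``set-up as in Theorem~\ref{th3.1}'': $\bm{\theta}=\bm{\delta}$, $\bm{\alpha}=\bm{\beta}=\alpha\bm{1}_{n}\le 1$, and $wr_b(w)$ is decreasing in $w>0$. Since the constant vector $\mu\bm{1}_{n}$ lies simultaneously in $\mathcal{I}_+$ and in $\mathcal{D}_+$, it belongs to whichever of the two cones contains $\bm{\lambda}$ and $\bm{\theta}$, so the ambient requirement $\bm{\lambda},\bm{\theta},\bm{\mu}\in\mathcal{I}_+$ (or $\mathcal{D}_+$) is also met. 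Thus it remains only to verify that $n\mu\le\sum_{i=1}^{n}\lambda_i$ forces $\bm{\lambda}\succeq_w\mu\bm{1}_{n}$, after which $X_{2:n}\ge_{st}Y_{2:n}$ follows at once from Theorem~\ref{th3.1}.

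To check the weak submajorization, recall that $\bm{\lambda}\succeq_w\mu\bm{1}_{n}$ means $\mu\bm{1}_{n}\preceq_w\bm{\lambda}$, i.e. $\sum_{i=l}^{n}\mu\le\sum_{i=l}^{n}\lambda_{i:n}$ for every $l=1,\dots,n$, equivalently $(n-l+1)\mu\le\sum_{i=l}^{n}\lambda_{i:n}$. The right-hand side is the sum of the $n-l+1$ largest coordinates of $\bm{\lambda}$, hence is at least $(n-l+1)$ times the arithmetic mean $\frac{1}{n}\sum_{i=1}^{n}\lambda_i$, which by the hypothesis $n\mu\le\sum_{i=1}^{n}\lambda_i$ is at least $(n-l+1)\mu$. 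This yields exactly the desired inequalities; alternatively one may simply invoke the displayed fact $(\mu,\cdots,\mu)\preceq_w(\lambda_1,\cdots,\lambda_n)$ noted just above the statement.

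I do not expect a genuine obstacle here: the argument is a one-line reduction followed by an elementary majorization check. The only points that need a little care are the bookkeeping of the majorization order in Theorem~\ref{th3.1} (that $\bm{\lambda}\succeq_w\bm{\mu}$ compares the tail sums $\sum_{i=l}^{n}(\cdot)_{i:n}$, so it is $\mu\bm{1}_{n}$ that must be the smaller vector) and confirming that the constant vector lies in the relevant cone, so that Theorem~\ref{th3.1} applies as stated. Invoking Theorem~\ref{th3.1} with $\bm{\mu}$ taken to be $\mu\bm{1}_{n}$ then completes the proof.
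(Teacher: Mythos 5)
Your proposal is correct and follows the same route as the paper, which derives the corollary directly from Theorem~\ref{th3.1} together with the fact that $(\mu,\cdots,\mu)\preceq_{w}(\lambda_1,\cdots,\lambda_n)$ whenever $n\mu\le\sum_{i=1}^{n}\lambda_i$. Your explicit verification of that weak submajorization via the tail sums, and the observation that the constant vector lies in both $\mathcal{I}_+$ and $\mathcal{D}_+$, just fill in details the paper leaves implicit.
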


To illustrate Theorem \ref{th3.1}, we consider the following example.
\begin{example}\label{ex3.1}
	Let $\{X_1,X_2,X_3\}$ and $\{Y_1,Y_2,Y_3\}$ be two sets of three independent random variables, with $X_{i}\sim F^{\alpha}_{b}(\frac{x-\lambda_i}{\theta_i})$ and $Y_{i}\sim F^{\alpha}_{b}(\frac{x-\mu_i}{\theta_i})$, for $i=1,2,3.$ Take the baseline distribution function as $F_{b}(x)=1-x^{-2},~x\ge1.$ Clearly, $xr_{b}(x)=2$, and hence $xr_{b}(x)$ is decreasing for $x\ge1$. Further, set $\bm{\theta}=\bm{\delta}=(0.5,0.7,0.9)$, $\bm{\lambda}=(5,7,9)$, $\bm{\mu}=(2,4,7)$ and $\alpha=0.2.$ It is easy to check that all other conditions of Theorem \ref{th3.1} are satisfied. Thus, we have $X_{2:3}\ge_{st}Y_{2:3}$, which  can be verified from Figure $1a$.
	\begin{figure}[h]
		\begin{center}
			\subfigure[]{\label{c1}\includegraphics[height=2.4in]{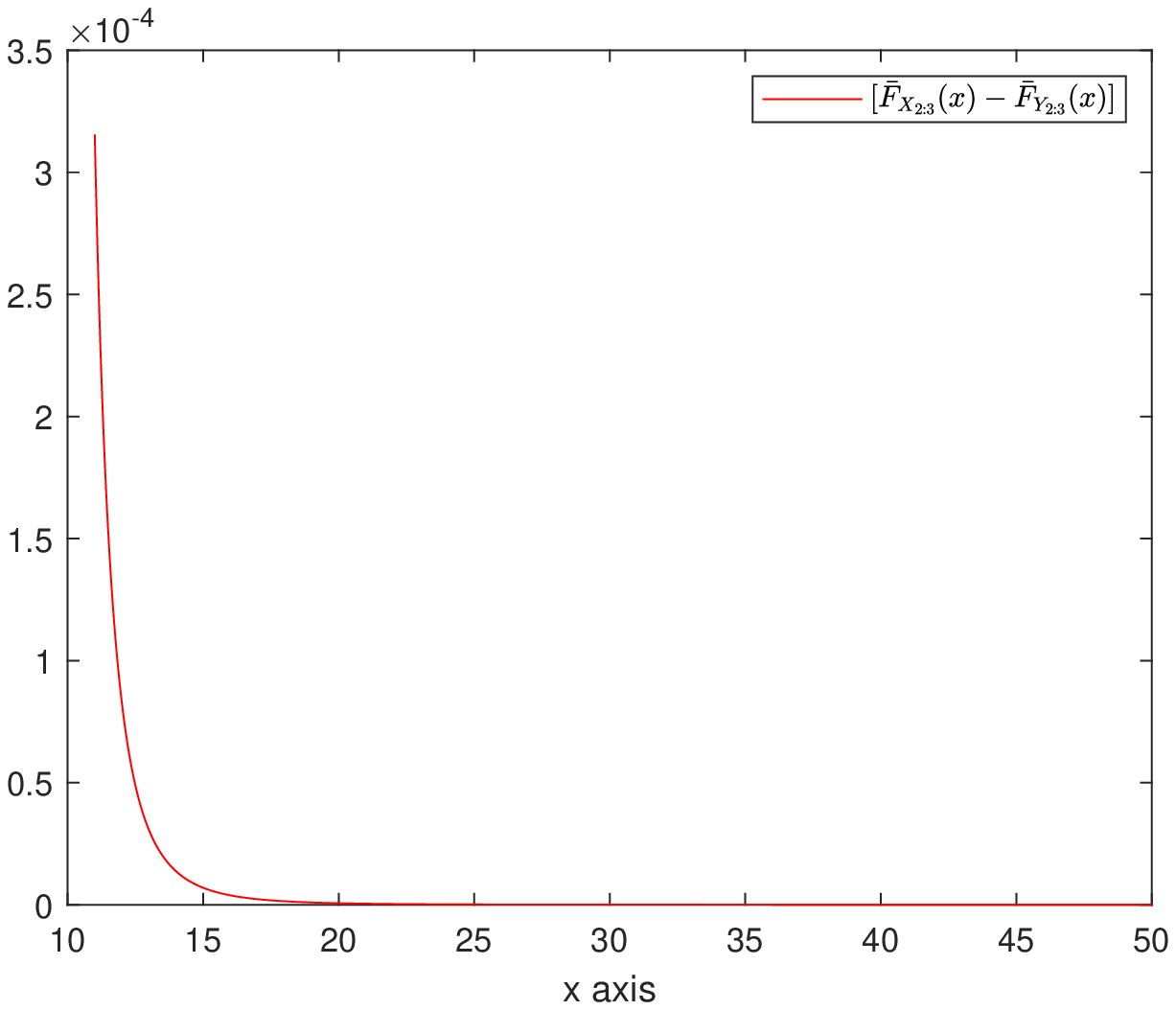}}
			\subfigure[]{\label{c1}\includegraphics[height=2.4in]{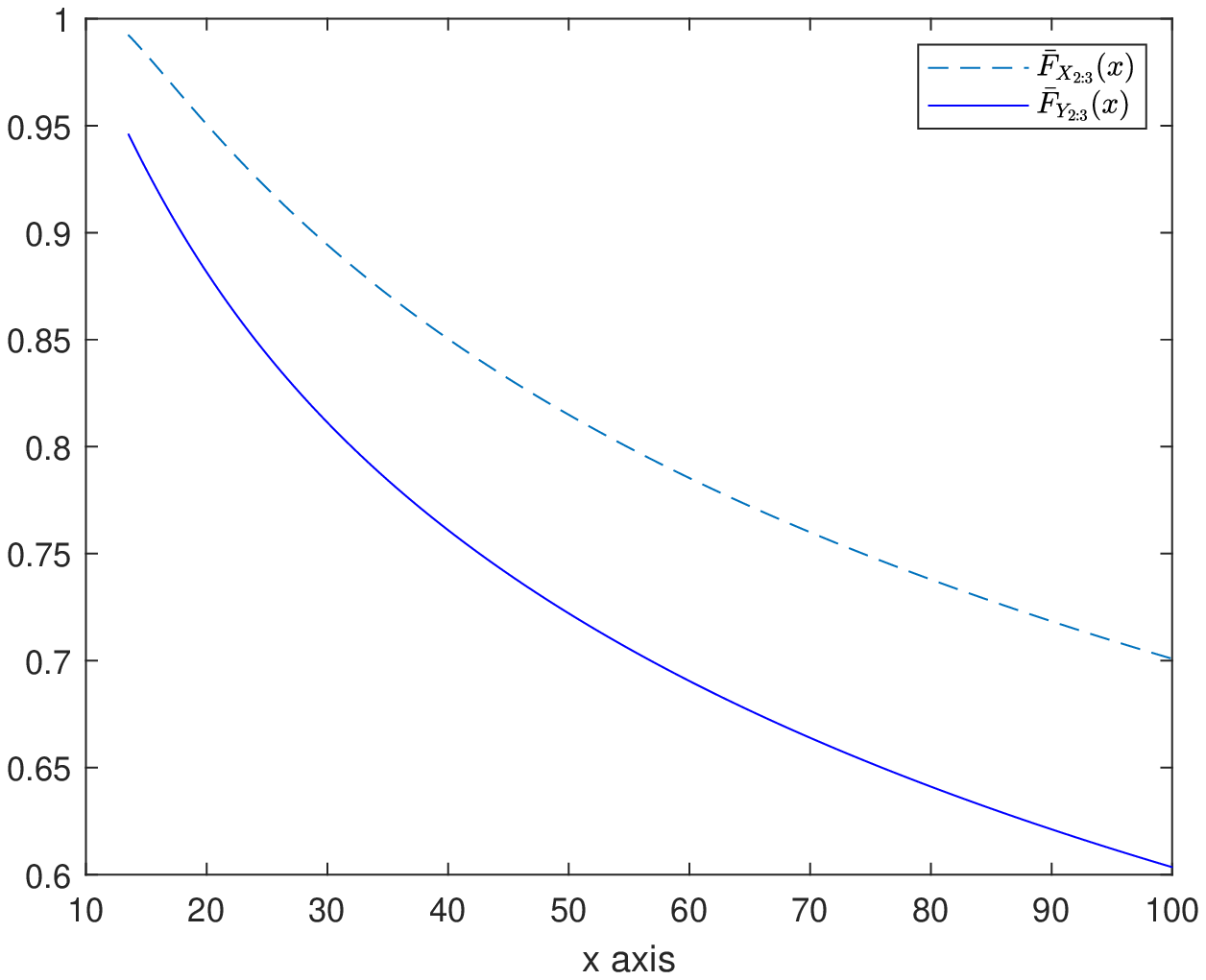}}
			\caption{(a) Plot of the difference $\bar{F}_{X_{2:3}}(x)-\bar{F}_{Y_{2:3}}(x)$ based on the observations as in Example \ref{ex3.1}. (b) Graphs of the survival functions, which are described in Example \ref{ex3.5}, for $a=0.12$. }
		\end{center}
	\end{figure}
\end{example}

In the next result, we consider the models with common scale and common shape parameters. Note that the previous theorem deals with the condition that $\boldsymbol{\theta}=\boldsymbol{\delta}$. Here, we take $\boldsymbol{\theta}=\boldsymbol{\delta}=\theta \bm{1}_{n}$. Under this restriction on $\boldsymbol{\theta}$ and $\boldsymbol{\delta}$, we have the following dominance result between the second-order statistics, similar to Theorem \ref{th3.1}. The proof is analogous to that of Theorem \ref{th3.1} and thus, it is omitted for the sake of brevity.
\begin{theorem}\label{th3.1*}
 Assume $\boldsymbol{X}\sim \mathbb{ELS}(
\boldsymbol{\lambda},\boldsymbol{\theta},\boldsymbol{\alpha};F_{b})$ and $\boldsymbol{Y}\sim \mathbb{ELS}(
\boldsymbol{\mu},\boldsymbol{\delta},\boldsymbol{\beta};F_{b}),$ with $\boldsymbol{\theta}=\boldsymbol{\delta}=\theta \bm{1}_{n}$ and $\boldsymbol{\alpha}=\boldsymbol{\beta}=\alpha \bm{1}_{n}\leq1$. Let $\boldsymbol{\lambda},~ \boldsymbol{\mu}\in\mathcal{I}_+~(or~\mathcal{D}_+)$ and $r_{b}(w)$ be decreasing in $w>0$. Then,  ${\boldsymbol\lambda}\succeq_{w}{\boldsymbol\mu}\Rightarrow X_{2:n}\geq_{st}Y_{2:n}.$
\end{theorem}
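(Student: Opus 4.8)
The plan is to mimic the proof of Theorem \ref{th3.1} essentially verbatim, replacing the role played by $w r_b(w)$ with $r_b(w)$. The key observation is that when $\boldsymbol\theta=\boldsymbol\delta=\theta\bm 1_n$, the common scale drops out of the argument-comparison step, so the condition ``$w r_b(w)$ decreasing'' can be relaxed to ``$r_b(w)$ decreasing''. Write $\zeta_1(\boldsymbol\lambda)=\bar F_{X_{2:n}}(x)$, where the survival function is obtained from (\ref{eq3.1}) with $\alpha_k$ replaced by $\alpha$ and $\theta_k$ replaced by $\theta$. By Theorem A.8 of \cite{Marshall2011}, to conclude ${\boldsymbol\lambda}\succeq_{w}{\boldsymbol\mu}\Rightarrow X_{2:n}\geq_{st}Y_{2:n}$ it suffices to show that $\zeta_1$ is increasing and Schur-convex on $\mathcal I_+$ (the argument on $\mathcal D_+$ being symmetric).

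First I would compute $\partial\zeta_1/\partial\lambda_i=\vartheta_{1i}(\boldsymbol\lambda)\,\vartheta_1(\lambda_i)$ exactly as in (\ref{eqi14})--(\ref{eqi3.1}), except that now
$$
\vartheta_1(\lambda_i)=\frac{[r_b(w)]_{w=(x-\lambda_i)/\theta}}{\theta}\cdot\frac{\alpha F_b^{\alpha-1}\!\left(\frac{x-\lambda_i}{\theta}\right)\left[1-F_b\!\left(\frac{x-\lambda_i}{\theta}\right)\right]}{1-\left[F_b\!\left(\frac{x-\lambda_i}{\theta}\right)\right]^{\alpha}}.
$$
As in the earlier proof, it is enough to show that both $\vartheta_{1i}(\boldsymbol\lambda)$ and $\vartheta_1(\lambda_i)$ are nonnegative and increasing in $\lambda_i$. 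Nonnegativity is immediate. For $1\le i\le j\le n$ we now simply have $\lambda_i\le\lambda_j$, hence $(x-\lambda_i)/\theta\ge(x-\lambda_j)/\theta$; since $r_b$ is decreasing, $[r_b(w)]_{w=(x-\lambda_i)/\theta}\le[r_b(w)]_{w=(x-\lambda_j)/\theta}$, which is the analogue of (\ref{eqi5}). Combining this with Lemma \ref{lem2.4} (with $0<\alpha\le1$, the $\omega(\alpha,t)$ factor is decreasing in $t$, and $t=F_b((x-\lambda_i)/\theta)$ is increasing in $\lambda_i$, so $\omega(\alpha,t)/(\text{linear in }\lambda_i)$ behaves correctly) gives that $\vartheta_1(\lambda_i)$ is increasing in $\lambda_i$. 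The monotonicity of $\vartheta_{1i}(\boldsymbol\lambda)$ follows from the same telescoping computation as in Theorem \ref{th3.1}: for $\lambda_i\le\lambda_j$,
$$
\vartheta_{1i}(\boldsymbol\lambda)-\vartheta_{1j}(\boldsymbol\lambda)=\prod_{k\neq\{i,j\}}^{n}\left\{1-\left[F_b\!\left(\tfrac{x-\lambda_k}{\theta}\right)\right]^{\alpha}\right\}\left[\left[F_b\!\left(\tfrac{x-\lambda_j}{\theta}\right)\right]^{\alpha}-\left[F_b\!\left(\tfrac{x-\lambda_i}{\theta}\right)\right]^{\alpha}\right]\le 0.
$$
Then Lemma \ref{lem2.2} (applied to the increasing, continuously differentiable function $\zeta_1$ on $\mathcal I_+$) together with the definition of Schur-convexity yields that $\zeta_1$ is Schur-convex on $\mathcal I_+$, completing the proof.

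\textbf{The main obstacle}, such as it is, lies in verifying the monotonicity of $\vartheta_1(\lambda_i)$: one must correctly track how the two factors in $\vartheta_1(\lambda_i)$ depend on $\lambda_i$ — the $r_b$ factor is increasing in $\lambda_i$ because $r_b$ is decreasing and $(x-\lambda_i)/\theta$ is decreasing in $\lambda_i$, while the second factor is precisely $\omega(\alpha,t)/(x-\lambda_i)$ up to constants with $t=F_b((x-\lambda_i)/\theta)$, so Lemma \ref{lem2.4}(ii) (the case $0<\alpha\le1$) is what controls it. Everything else is a direct transcription of the Theorem \ref{th3.1} argument, which is why the authors note that the proof is analogous; the only genuine difference is that the common scale $\theta\bm 1_n$ removes the need for the stronger hypothesis on $w r_b(w)$ and lets the weaker hypothesis on $r_b$ suffice.
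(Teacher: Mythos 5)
Your proof is correct and is exactly the argument the paper intends (the paper omits the proof of this theorem, stating only that it is analogous to that of Theorem \ref{th3.1}), and you correctly identify why the hypothesis weakens: with a common scale the factor $[wr_{b}(w)]_{w=(x-\lambda_i)/\theta}/(x-\lambda_i)$ collapses to $r_{b}\left(\frac{x-\lambda_i}{\theta}\right)/\theta$, so only monotonicity of $r_b$ is needed. One small sign slip in your justification: $t=F_{b}\left(\frac{x-\lambda_i}{\theta}\right)$ is \emph{decreasing}, not increasing, in $\lambda_i$, and it is this direction, combined with $\omega(\alpha,t)/t$ being decreasing in $t$ for $0<\alpha\le1$ (Lemma \ref{lem2.4}), that makes the second factor of $\vartheta_1(\lambda_i)$ increasing in $\lambda_i$; the direction you state would give the opposite conclusion.
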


The below presented numerical example is an application of Theorem \ref{th3.1*}. 
\begin{example}
	Let us take two sets of independent observations  $\{X_{1},X_{2},X_{3}\}$ and
	$\{Y_{1},Y_{2},Y_{3}\}$ such that $X_{i}\sim	 F^{\alpha}_b(\frac{x-\lambda_i}{\theta})$ and $Y_{i}\sim
	F^{\alpha}_b(\frac{x-\mu_i}{\theta})$, for $i=1,2,3.$ The baseline distribution  is taken as the Burr distribution, where
	$F_b(x)=1-(1+x^c)^{-k},~x>0,~c>0,~k>0.$ For $0< c\leq 1$ and $ k>0$, the hazard rate function of the Burr distribution is decreasing.
	Now, consider $\boldsymbol{\lambda}=(7,9,11)$,
	$\boldsymbol{\mu}=(3,5,8)$, $\alpha=0.2,$ $c=0.5,~ k=2$ and
	$\theta=2$. Clearly, $\boldsymbol{\lambda}\succeq_{w}\boldsymbol{\mu}$ and $\boldsymbol{\lambda}, ~ \boldsymbol{\mu}\in\mathcal{I}_{+}$.
	Therefore, utilizing Theorem \ref{th3.1*},
	we obtain $X_{2:3}\ge_{st}Y_{2:3}.$
\end{example}

In Theorems \ref{th3.1} and \ref{th3.1*}, we assume different location parameter vectors. In the following result, the scale parameter vectors of the models are taken to be different. We obtain weak supermajorized-based sufficient conditions, under which the second-order statistics are compared in terms of the usual stochastic order.
\begin{theorem}\label{th3.2}
	Suppose $\bm{X}\sim \mathbb{ELS}(
	\bm{\lambda},\bm{\theta},\bm{\alpha};F_{b})$ and $\bm{Y}\sim \mathbb{ELS}(
	\bm{\mu},\bm{\delta},\bm{\beta};F_{b}),$ with $\bm{\lambda}=\bm{\mu}$ and $\bm{\alpha}=\bm{\beta}=\alpha \bm{1}_{n}\le 1.$ Let $\bm{\lambda},~\bm{\theta},~\bm{\delta}\in \mathcal{I}_{+}~(or~\mathcal{D}_{+})$ and $w r_{b}(w)$ be decreasing in $w>0.$ Then, $\frac{1}{\boldsymbol\theta}\succeq^{w}\frac{1}{\boldsymbol\delta}\Rightarrow X_{2:n}\geq_{st}Y_{2:n}$.
\end{theorem}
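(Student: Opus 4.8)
The plan is to follow the strategy of the proof of Theorem~\ref{th3.1}, but to work in the \emph{reciprocal} scale variables so that the weak supermajorization hypothesis can be exploited on an ordered cone. I would present the case $\bm{\lambda},\bm{\theta},\bm{\delta}\in\mathcal{I}_{+}$; the case $\mathcal{D}_{+}$ is entirely symmetric, with Lemma~\ref{lem2.2} used in place of Lemma~\ref{lem2.1}. Fix $x>\max(\lambda_1,\cdots,\lambda_n)$ and set $a_i=1/\theta_i$ and $b_i=1/\delta_i$; since $\bm{\theta},\bm{\delta}\in\mathcal{I}_{+}$ we have $\bm{a}=1/\bm{\theta},\ \bm{b}=1/\bm{\delta}\in\mathcal{D}_{+}$, and, using $\bm{\lambda}=\bm{\mu}$ and $\bm{\alpha}=\bm{\beta}=\alpha\bm{1}_{n}$, the survival function of $X_{2:n}$ obtained from~(\ref{eq3.1}) and the corresponding one for $Y_{2:n}$ become $\eta(\bm{a})$ and $\eta(\bm{b})$ respectively, where
\begin{equation*}
\eta(\bm{a})=\sum_{l=1}^{n}\prod_{k\neq l}^{n}\left\{1-\left[F_{b}\big(a_k(x-\lambda_k)\big)\right]^{\alpha}\right\}-(n-1)\prod_{k=1}^{n}\left\{1-\left[F_{b}\big(a_k(x-\lambda_k)\big)\right]^{\alpha}\right\}.
\end{equation*}
The hypothesis $\frac{1}{\bm{\theta}}\succeq^{w}\frac{1}{\bm{\delta}}$ is exactly $\bm{b}\preceq^{w}\bm{a}$. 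Hence, by the weak-supermajorization analogue of the reduction used in the proof of Theorem~\ref{th3.1} — for a function $\eta$ that is decreasing and Schur-convex, $\bm{b}\preceq^{w}\bm{a}$ forces $\eta(\bm{b})\le\eta(\bm{a})$ (the counterpart, for weak supermajorization, of the result of~\cite{Marshall2011} invoked there) — it suffices to prove that $\eta$ is decreasing and Schur-convex on $\mathcal{D}_{+}$.

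Monotonicity is the routine part: differentiating as in~(\ref{eqi14})--(\ref{eqi3.1}) one obtains, with $v_i:=a_i(x-\lambda_i)$,
\begin{equation*}
\frac{\partial \eta(\bm{a})}{\partial a_i}=-\,\vartheta_{2i}(\bm{a})\,(x-\lambda_i)\,r_{b}(v_i)\,\frac{\omega\big(\alpha,F_{b}(v_i)\big)}{F_{b}(v_i)},
\end{equation*}
where $\omega$ is the function of Lemma~\ref{lem2.4} and $\vartheta_{2i}(\bm{a})=\sum_{l\neq i}\prod_{k\neq l}\{1-[F_{b}(a_k(x-\lambda_k))]^{\alpha}\}-(n-1)\prod_{k}\{1-[F_{b}(a_k(x-\lambda_k))]^{\alpha}\}$ is the exact analogue of the quantity $\vartheta_{1i}$ in the proof of Theorem~\ref{th3.1} and is, for the same reason (each of its $n-1$ leading products dominates the trailing one), nonnegative. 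Since every factor on the right is nonnegative, $\partial\eta/\partial a_i\le0$, so $\eta$ is decreasing.

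For the Schur-convexity I would invoke Lemma~\ref{lem2.1}: it is enough to check that $\eta_{(k)}(\bm{z})=\partial\eta(\bm{z})/\partial z_k$ is decreasing in $k=1,\cdots,n$ for every $\bm{z}\in\mathcal{D}_{+}$. Writing $v_k:=z_k(x-\lambda_k)$ and using $z_1\ge\cdots\ge z_n$ and $\lambda_1\le\cdots\le\lambda_n$, both $v_k$ and $x-\lambda_k$ are decreasing in $k$, and it suffices to show that each of the three nonnegative factors $\vartheta_{2k}(\bm{z})$, $(x-\lambda_k)\,r_{b}(v_k)$ and $\omega(\alpha,F_{b}(v_k))/F_{b}(v_k)$ is increasing in $k$, for then their product increases in $k$. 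The first is increasing by the telescoping identity $\vartheta_{2k}(\bm{z})-\vartheta_{2k'}(\bm{z})=\prod_{j\neq\{k,k'\}}^{n}\{1-[F_{b}(v_j)]^{\alpha}\}\big([F_{b}(v_{k'})]^{\alpha}-[F_{b}(v_{k})]^{\alpha}\big)\le0$ for $k<k'$, since $v_k\ge v_{k'}$; the third is increasing because $\alpha\le1$ makes $t\mapsto\omega(\alpha,t)$ decreasing (Lemma~\ref{lem2.4}) while $F_{b}(v_k)$ decreases in $k$.

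The main obstacle is the middle factor $(x-\lambda_k)\,r_{b}(v_k)$: on its own $x-\lambda_k$ is \emph{decreasing} in $k$ and $r_{b}$ need not be monotone, so a factorwise comparison fails — and this is exactly why one cannot run the argument directly in the variables $\bm{\theta}$ on $\mathcal{I}_{+}$, where the same obstruction persists. The resolution is to use $x-\lambda_k=v_k/z_k$ and write $(x-\lambda_k)\,r_{b}(v_k)=v_k\,r_{b}(v_k)/z_k$: since $v_k$ is decreasing in $k$, the assumption that $w\,r_{b}(w)$ is decreasing makes $v_k\,r_{b}(v_k)$ increasing in $k$; and since $\bm{z}\in\mathcal{D}_{+}$, $1/z_k$ is increasing in $k$; hence the product is increasing. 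This is precisely where the ordered-cone structure of the parameter vectors enters, and why the reciprocal substitution is the natural route. With the three monotonicities in hand, $\eta_{(k)}(\bm{z})$ is decreasing in $k$, Lemma~\ref{lem2.1} gives that $\eta$ is Schur-convex on $\mathcal{D}_{+}$, and together with the monotonicity of $\eta$ this yields $\eta(\bm{b})\le\eta(\bm{a})$, i.e. $X_{2:n}\geq_{st}Y_{2:n}$.
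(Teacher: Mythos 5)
Your proof is correct and takes essentially the same route as the paper's: pass to the reciprocal scale variables $v_i=1/\theta_i$, factor the partial derivative of the survival function exactly as the paper does into $-\vartheta_{2i}(\bm{v})\vartheta_2(v_i)$, and establish that the resulting function is decreasing and Schur-convex on $\mathcal{D}_{+}$ before invoking the Marshall--Olkin characterization of weak supermajorization. The paper dismisses the monotonicity verifications as ``similar arguments'' to Theorem \ref{th3.1}, whereas you carry them out explicitly (in particular the factorization $(x-\lambda_k)\,r_{b}(v_k)=v_k r_{b}(v_k)/z_k$, which is precisely how the paper's factor $[w r_b(w)]_{w=(x-\lambda_i)v_i}/v_i$ is organized), and those details check out.
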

\begin{proof}
	We present the proof for $\bm{\lambda},~\bm{\theta},~\bm{\delta}\in \mathcal{I}_{+}$. The proof for the other case is similar, and hence not presented here. Under the assumed set-up, the survival function of $X_{2:n}$ can be expressed as
	\begin{eqnarray}\label{eq3.11}
	\small\bar{F}_{X_{2:n}}(x)=\sum_{l=1}^{n}\left[\prod_{k\neq l}^{n}\left\{1-\left[F_{b}\left(({x-\lambda_k}){v_k}\right)\right]^{\alpha}\right\}\right]-(n-1)\prod_{k=1}^{n}\left\{1-\left[F_{b}\left(({x-\lambda_k}){v_k}\right)\right]^{\alpha}\right\},
	\end{eqnarray}
	where $x>\max(\lambda_1,\cdots,\lambda_n)$ and $v_i=1/\theta_i,$ for $i=1,\cdots,n.$ Define $\zeta_{2}(\bm{v})=\bar{F}_{X_{2:n}}(x),$ where $\bar{F}_{X_{2:n}}(x)$ is given by \eqref{eq3.11}. On differentiating this partially with respect to $v_i$, we get
	\begin{equation}\label{eqi15}
	\frac{\partial\zeta_2\left({\boldsymbol v}\right)}{\partial v_i}=-\vartheta_{2i}(\bm{v})\vartheta_2(v_i),
	\end{equation}
	where
	\begin{eqnarray}\nonumber\label{eqi3.9}
	\vartheta_{2i}(\bm{v})&=&\sum_{\overset{l=1}{l\neq i}}^{n}\left[\prod_{k\neq l}^{n}\left\{1-\left[F_{b}\left(({x-\lambda_k}){v_k}\right)\right]^{\alpha}\right\}\right]-(n-1)\prod_{k=1}^{n}\left\{1-\left[F_{b}\left(({x-\lambda_k}){v_k}\right)\right]^{\alpha}\right\} \text{ and }
	\\
	\vartheta_2( v_i )&=&\left[\frac{[w{{r}_{b}}(w)]_{w=\left(({x-\lambda_i}){v_i}\right)}}{v_i}\right]\left[\frac{\alpha F_{b}^{(\alpha-1)}\left(({x-\lambda_i}){v_i}\right)\left[1-F_{b}\left(({x-\lambda_i}){v_i}\right)\right]}{1-\left[F_{b}\left(({x-\lambda_i}){v_i}\right)\right]^{\alpha}}\right].	 
	\end{eqnarray}
	In order to prove the required result, it is sufficient to show that the function $\zeta_2\left({\boldsymbol v}\right)$ is decreasing and Schur-convex with respect to $\bm{v}\in \mathcal{D}_{+}.$ This can be accomplished with similar arguments, which have been used to prove Theorem \ref{th3.1}. Thus, the remaining part of the proof is omitted.
\end{proof}

We have $(\frac{1}{\theta_1},\cdots,\frac{1}{\theta_n})\succeq^{w} (\frac{1}{\theta},\cdots,\frac{1}{\theta})$, for $\frac{1}{\theta}\ge \frac{1}{n}\sum_{i=1}^{n}\frac{1}{\theta_i}$. Thus, we get the following corollary from Theorem \ref{th3.2}. This result is useful to obtain a lower bound of the reliability of a fail-safe system with heterogeneous components in terms of that of a fail-safe system with homogeneous components.
\begin{corollary}
	Let  $\boldsymbol{X}\sim \mathbb{ELS}(
	{\lambda},\boldsymbol{\theta},\boldsymbol{\alpha};{F_{b}})$ and $\boldsymbol{Y}\sim \mathbb{ELS}(
	\lambda,\theta,\boldsymbol{\beta};F_{b}),$ with  $\boldsymbol{\alpha}=\boldsymbol{\beta}=\alpha \bm{1}_n\le 1$. Also, consider that $\boldsymbol{\theta}\in\mathcal{I}_+~(or~\mathcal{D}_+)$. Then,
	$n/\theta\ge \sum_{i=1}^{n}(1/\theta_i)\Rightarrow X_{2:n}\geq_{st}Y_{2:n}$, provided $wr_{b}(w)$ is decreasing in $w>0$.
\end{corollary}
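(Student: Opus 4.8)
The plan is to obtain this corollary as an immediate consequence of Theorem \ref{th3.2}; the only substantive point is to convert the scalar condition $n/\theta\ge\sum_{i=1}^{n}(1/\theta_i)$ into the weak supermajorization relation that Theorem \ref{th3.2} requires. First I would match the present set-up with the hypotheses of Theorem \ref{th3.2}: put $\boldsymbol{\lambda}=\boldsymbol{\mu}=\lambda\bm{1}_{n}$, $\boldsymbol{\delta}=\theta\bm{1}_{n}$, $\boldsymbol{\alpha}=\boldsymbol{\beta}=\alpha\bm{1}_{n}$ with $\alpha\le1$, and retain $\boldsymbol{\theta}=(\theta_1,\dots,\theta_n)$. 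A constant vector belongs to both $\mathcal{I}_{+}$ and $\mathcal{D}_{+}$, so the triple $(\boldsymbol{\lambda},\boldsymbol{\theta},\boldsymbol{\delta})$ lies in $\mathcal{I}_{+}$ (or in $\mathcal{D}_{+}$) according to which of these contains $\boldsymbol{\theta}$; together with the standing assumption that $wr_{b}(w)$ is decreasing in $w>0$, all the structural hypotheses of Theorem \ref{th3.2} are in force.

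Next I would check that $\frac{1}{\boldsymbol{\theta}}\succeq^{w}\frac{1}{\boldsymbol{\delta}}$. Writing $a_i=1/\theta_i$ and $b=1/\theta$, the hypothesis $n/\theta\ge\sum_{i=1}^{n}(1/\theta_i)$ reads $b\ge\frac{1}{n}\sum_{i=1}^{n}a_i$. For every $l=1,\dots,n$ the mean of the $l$ smallest of $a_1,\dots,a_n$ is at most the overall mean, hence $\sum_{i=1}^{l}a_{i:n}\le l\cdot\frac{1}{n}\sum_{i=1}^{n}a_i\le lb$; since $\frac{1}{\boldsymbol{\delta}}=(b,\dots,b)$ has $\sum_{i=1}^{l}(1/\delta)_{i:n}=lb$, this says exactly that $\frac{1}{\boldsymbol{\delta}}$ is weakly supermajorized by $\frac{1}{\boldsymbol{\theta}}$, i.e.\ $\frac{1}{\boldsymbol{\theta}}\succeq^{w}\frac{1}{\boldsymbol{\delta}}$ in the sense of Definition \ref{def2.2}. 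This is precisely the displayed majorization fact recorded just before the corollary.

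Finally, Theorem \ref{th3.2} applied with these identifications yields $X_{2:n}\geq_{st}Y_{2:n}$, which is the assertion; reading the survival function of $Y_{2:n}$ from \eqref{eq3.11} with all $\lambda_k=\lambda$ and all $v_k=1/\theta$ then exhibits this as the promised lower bound for the reliability of the heterogeneous fail-safe system in terms of the homogeneous one. I do not anticipate a genuine obstacle: the corollary is essentially a one-line deduction from Theorem \ref{th3.2}, and the only thing needing a moment's care is keeping the direction of $\succeq^{w}$ straight and noting that constant parameter vectors lie in both ordered cones, so that the deduction is valid whether $\boldsymbol{\theta}\in\mathcal{I}_{+}$ or $\boldsymbol{\theta}\in\mathcal{D}_{+}$.
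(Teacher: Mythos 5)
Your proposal is correct and follows essentially the same route as the paper: the paper likewise records the fact that $n/\theta\ge\sum_{i=1}^{n}(1/\theta_i)$ gives $(\tfrac{1}{\theta_1},\dots,\tfrac{1}{\theta_n})\succeq^{w}(\tfrac{1}{\theta},\dots,\tfrac{1}{\theta})$ and then invokes Theorem \ref{th3.2}. You merely make explicit the averaging argument behind that majorization fact and the observation that constant parameter vectors lie in both $\mathcal{I}_{+}$ and $\mathcal{D}_{+}$, both of which the paper leaves implicit.
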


In the preceding theorem, we assume that the location parameters of the sets are vector-valued but equal. In the following result, we consider that the location parameters have a common (scalar) value. The proof is omitted, since it follows using a similar argument to that of Theorem \ref{th3.2}.
\begin{theorem}\label{th3.2*}
	Suppose  $\boldsymbol{X}\sim \mathbb{ELS}(
\boldsymbol{\lambda},\boldsymbol{\theta},\boldsymbol{\alpha};F_{b})$ and $\boldsymbol{Y}\sim \mathbb{ELS}(
\boldsymbol{\mu},\boldsymbol{\delta},\boldsymbol{\beta};F_{b}),$ with $\boldsymbol{\lambda}=\boldsymbol{\mu}=\lambda \bm{1}_{n}$ and $\boldsymbol{\alpha}=\boldsymbol{\beta}=\alpha \bm{1}_{n}\leq1$. Also, assume $ \boldsymbol{\theta},~\boldsymbol{\delta}\in\mathcal{I}_+~(or~\mathcal{D}_+)$ and $r_{b}(w)$ is decreasing in $w>0$. Then,  $1/{\boldsymbol\theta}\succeq^{w}{1/{\boldsymbol\delta}}\Rightarrow X_{2:n}\geq_{st}Y_{2:n}$.
\end{theorem}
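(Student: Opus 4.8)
The plan is to mirror exactly the structure of the proof of Theorem~\ref{th3.2}, with the single difference being that the weight $w r_b(w)$ is replaced by $r_b(w)$, which is legitimate because the common location parameter is now a scalar $\lambda$ rather than a vector. First I would fix attention on the case $\boldsymbol{\theta},~\boldsymbol{\delta}\in\mathcal{I}_+$, the case $\mathcal{D}_+$ being symmetric. Writing $v_i=1/\theta_i$ and $\zeta_2^{*}(\boldsymbol v)=\bar F_{X_{2:n}}(x)$, the survival function of $X_{2:n}$ is obtained from \eqref{eq3.11} by putting $\lambda_k\equiv\lambda$. Since $\frac{1}{\boldsymbol\theta}\succeq^{w}\frac{1}{\boldsymbol\delta}$ is a weak supermajorization, by Theorem~A.8 of \cite{Marshall2011} it suffices to show that $\zeta_2^{*}$ is decreasing and Schur-convex in $\boldsymbol v$ on $\mathcal{D}_+$; equivalently, by Lemma~\ref{lem2.2}, that each partial derivative $\partial\zeta_2^{*}/\partial v_i$ is nonpositive and that $-\partial\zeta_2^{*}/\partial v_i$ is decreasing in $i$ (recall on $\mathcal{D}_+$ the ordering of the arguments is reversed relative to $\mathcal{I}_+$).

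Next I would compute the partial derivative. As in \eqref{eqi15}–\eqref{eqi3.9}, one gets $\partial\zeta_2^{*}(\boldsymbol v)/\partial v_i=-\vartheta_{2i}^{*}(\boldsymbol v)\,\vartheta_2^{*}(v_i)$, where $\vartheta_{2i}^{*}(\boldsymbol v)$ has the same product-sum form as $\vartheta_{2i}(\boldsymbol v)$ but with $\lambda_k\equiv\lambda$, and
\[
\vartheta_2^{*}(v_i)=\bigl[(x-\lambda)\,r_b\bigl((x-\lambda)v_i\bigr)\bigr]\left[\frac{\alpha F_b^{\alpha-1}\bigl((x-\lambda)v_i\bigr)\bigl[1-F_b\bigl((x-\lambda)v_i\bigr)\bigr]}{1-F_b^{\alpha}\bigl((x-\lambda)v_i\bigr)}\right].
\]
Here the factor $v_i$ that appeared in the denominator of $\vartheta_2(v_i)$ in Theorem~\ref{th3.2} is absent: differentiating $F_b\bigl((x-\lambda)v_i\bigr)$ in $v_i$ brings down $(x-\lambda)$ rather than $(x-\lambda_i)/\theta_i^2$, and after writing the hazard rate in the argument $w=(x-\lambda)v_i$ one is left with $(x-\lambda)\,r_b(w)$ and no extra $1/v_i$. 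This is the structural reason the hypothesis weakens from ``$w r_b(w)$ decreasing'' to ``$r_b(w)$ decreasing.''

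Then I would verify the two monotonicity facts. That $\vartheta_2^{*}(v_i)\ge 0$ is clear; that it is increasing in $v_i$ follows from Lemma~\ref{lem2.4}(ii) (since $\alpha\le1$) applied with $t=F_b\bigl((x-\lambda)v_i\bigr)$ together with the fact that $r_b(w)$ is decreasing in $w$, so $(x-\lambda)\,r_b\bigl((x-\lambda)v_i\bigr)$ is decreasing in $v_i$ — one needs the product of these two monotone pieces to behave correctly, exactly as in Theorem~\ref{th3.1}. That $\vartheta_{2i}^{*}(\boldsymbol v)\ge 0$ follows because each factor $\prod_{k\neq l}\{\cdot\}$ dominates $\prod_{k=1}^{n}\{\cdot\}$; and for $v_i\le v_j$ (i.e. $i\le j$ on $\mathcal{I}_+$ ordering of $\boldsymbol\theta$, hence $v$ decreasing, so one tracks the indices carefully) the telescoping identity
\[
\vartheta_{2i}^{*}(\boldsymbol v)-\vartheta_{2j}^{*}(\boldsymbol v)=\prod_{k\neq\{i,j\}}^{n}\left\{1-\bigl[F_b\bigl((x-\lambda)v_k\bigr)\bigr]^{\alpha}\right\}\left[\bigl[F_b\bigl((x-\lambda)v_j\bigr)\bigr]^{\alpha}-\bigl[F_b\bigl((x-\lambda)v_i\bigr)\bigr]^{\alpha}\right]
\]
fixes the sign. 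Combining, $\partial\zeta_2^{*}/\partial v_i=-\vartheta_{2i}^{*}\vartheta_2^{*}\le 0$, so $\zeta_2^{*}$ is decreasing, and the coordinatewise monotonicity of $\vartheta_{2i}^{*}\vartheta_2^{*}$ gives Schur-convexity via Lemma~\ref{lem2.2}; then Theorem~A.8 of \cite{Marshall2011} yields $\zeta_2^{*}(1/\boldsymbol\theta)\ge\zeta_2^{*}(1/\boldsymbol\delta)$, i.e. $\bar F_{X_{2:n}}(x)\ge\bar F_{Y_{2:n}}(x)$ for all $x$, which is $X_{2:n}\ge_{st}Y_{2:n}$.

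The only genuine obstacle, and it is minor, is the bookkeeping of the sense in which ``$\boldsymbol\theta$ increasing'' translates into ``$\boldsymbol v=1/\boldsymbol\theta$ decreasing,'' so that the roles of $\mathcal{I}_+$ and $\mathcal{D}_+$ and the direction of monotonicity in Lemma~\ref{lem2.2} are matched consistently with the direction of $\succeq^{w}$; everything else is a verbatim repetition of the argument for Theorem~\ref{th3.2}. For this reason I would simply state that the proof is analogous to that of Theorem~\ref{th3.2} and omit the routine details, as the authors do.
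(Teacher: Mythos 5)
Your proposal is correct and takes essentially the same route the paper intends: the paper omits this proof as ``analogous to Theorem 3.2,'' and your key observation --- that with a common scalar $\lambda$ the derivative produces the factor $(x-\lambda)r_b\bigl((x-\lambda)v_i\bigr)$ with no residual $1/v_i$, which is why the hypothesis weakens from ``$wr_b(w)$ decreasing'' to ``$r_b(w)$ decreasing'' --- is exactly the right structural point. One wording slip worth fixing: you assert $\vartheta_2^{*}(v_i)$ is \emph{increasing} in $v_i$, but the two factors you cite are each positive and decreasing in $v_i$, so $\vartheta_2^{*}$ is \emph{decreasing} in $v_i$, which is in fact the direction required (for $i\le j$ one has $v_i\ge v_j$ and needs $\vartheta_{2i}^{*}\vartheta_2^{*}(v_i)\le \vartheta_{2j}^{*}\vartheta_2^{*}(v_j)$, i.e.\ Schur-convexity on $\mathcal{D}_+$, for which the relevant tool is Lemma 2.1 rather than Lemma 2.2).
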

The following constructed numerical example is an application of Theorem \ref{th3.2*}.
\begin{example}
	Let $\{X_{1},X_{2},X_{3}\}$ and
	$\{Y_{1},Y_{2},Y_{3}\}$ be two sets of independent random observations, with $X_{i}\sim	 F^{\alpha}_b(\frac{x-\lambda}{\theta_{i}})$ and $Y_{i}\sim
	F^{\alpha}_b(\frac{x-\lambda}{\delta_{i}})$, for $i=1,2,3.$ Take the baseline distribution function as the power generalized Weibull distribution, with
	$F_{b}(x)=1-\exp(1-(1+x^c)^{1/k}),~x>0,~c>0,~k>0.$ It is easy to check that for $ c\leq k$, $c<1,$ the hazard rate function of power generalized Weibull distribution is decreasing. Set $\alpha=0.6,$
	$\lambda=2$, $c=0.5,~k=2,$ $\boldsymbol{\theta}=(5,6,7)$ and $\boldsymbol{\delta}=(2,3,4)$. It can be verified that $1/{\boldsymbol\theta}\succeq^{w}{1/{\boldsymbol\delta}}$ and $\boldsymbol{\theta}, ~\boldsymbol{\delta}\in\mathcal{I}_{+}$.
	Therefore, from Theorem \ref{th3.2*},
	we have $X_{2:3}\ge_{st}Y_{2:3}.$
\end{example}

Next, we obtain another set of sufficient conditions, under which $X_{2:n}$ dominates $Y_{2:n}$ in the sense of the usual stochastic order. The conditions are associated with the reciprocal majorization order between the reciprocal of the scale parameter vectors.
\begin{theorem}\label{th3.3}
	Assume that  $\boldsymbol{X}\sim \mathbb{ELS}(
	\boldsymbol{\lambda},\boldsymbol{\theta},\boldsymbol{\alpha};F_{b})$ and $\boldsymbol{Y}\sim \mathbb{ELS}(
	\boldsymbol{\mu},\boldsymbol{\delta},\boldsymbol{\beta};F_{b}),$ with $\boldsymbol{\lambda}=\boldsymbol{\mu}$ and $\boldsymbol{\alpha}=\boldsymbol{\beta}=\alpha \bm{1}_{n}\leq1$. Also, let $\boldsymbol{\lambda}, ~\boldsymbol{\theta},~\boldsymbol{\delta}\in\mathcal{I}_+~(or~\mathcal{D}_+)$ and $w^2 r_{b}(w)$ be decreasing in $w>0$. Then, $1/{\boldsymbol\theta}\succeq^{rm}{1/{\boldsymbol\delta}}\Rightarrow X_{2:n}\geq_{st}Y_{2:n}$.
\end{theorem}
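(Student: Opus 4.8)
The plan is to mirror the structure of the proof of Theorem \ref{th3.2}, but replace the appeal to weak supermajorization (Lemma \ref{lem2.2} plus monotonicity) with an appeal to reciprocal majorization via Lemma \ref{lem2.3}. As before, write $v_i = 1/\theta_i$ and $\zeta_2(\boldsymbol v) = \bar F_{X_{2:n}}(x)$ using the representation \eqref{eq3.11}, with the partial derivatives given by \eqref{eqi15}--\eqref{eqi3.9}. Since $1/\boldsymbol\theta \succeq^{rm} 1/\boldsymbol\delta$ means (writing $a_i = \theta_i$, so that the coordinates of $1/\boldsymbol\theta$ are the $1/\theta_i = 1/a_i$) that the partial sums of the ordered $1/(1/\theta_i) = \theta_i$ dominate appropriately, I would invoke Lemma \ref{lem2.3} with the function $h(\boldsymbol v) = \zeta_2(\boldsymbol v)$ evaluated at $v_i = 1/a_i$. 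Thus it suffices to verify: (i) $\zeta_2(1/a_1,\dots,1/a_n)$ is Schur-convex in $(a_1,\dots,a_n)$, and (ii) $\zeta_2(1/a_1,\dots,1/a_n)$ is increasing in each $a_i$ — equivalently, since $v_i = 1/a_i$ is decreasing in $a_i$, that $\zeta_2(\boldsymbol v)$ is decreasing in each $v_i$.

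Part (ii) is already available: the computation in the proof of Theorem \ref{th3.2} shows $\partial\zeta_2/\partial v_i = -\vartheta_{2i}(\boldsymbol v)\vartheta_2(v_i) \le 0$ because $\vartheta_{2i}(\boldsymbol v)\ge 0$ (a single product dominates the full product) and $\vartheta_2(v_i)\ge 0$ (it is a ratio of manifestly positive quantities, the $\omega$-factor of Lemma \ref{lem2.4} being positive). So the genuinely new work is part (i). Here I would set $\eta(\boldsymbol a) := \zeta_2(1/a_1,\dots,1/a_n)$ and compute $\partial\eta/\partial a_i = -(1/a_i^2)\,\partial\zeta_2/\partial v_i\big|_{v_i = 1/a_i} = (1/a_i^2)\,\vartheta_{2i}(\boldsymbol v)\,\vartheta_2(v_i)\big|_{v_i=1/a_i}$. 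Writing $w_i = (x-\lambda_i)/a_i = (x-\lambda_i)v_i$, one has $(1/a_i^2)\,[w r_b(w)]_{w=w_i}/v_i = (1/a_i)(x-\lambda_i)v_i r_b(w_i)\cdot(1/a_i) = (x-\lambda_i)\,v_i^2\,r_b(w_i)$; since $v_i = w_i/(x-\lambda_i)$ this equals $w_i^2 r_b(w_i)/(x-\lambda_i)$. Hence
\begin{equation*}
\frac{\partial\eta(\boldsymbol a)}{\partial a_i} = \vartheta_{2i}(\boldsymbol v)\cdot\frac{[w^2 r_b(w)]_{w=w_i}}{x-\lambda_i}\cdot\frac{\alpha F_b^{\alpha-1}(w_i)\bigl(1-F_b(w_i)\bigr)}{1-F_b^{\alpha}(w_i)},
\end{equation*}
and the hypothesis that $w^2 r_b(w)$ is decreasing is exactly what makes the middle factor behave the right way.

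By Lemma \ref{lem2.1} (for $\boldsymbol a \in \mathcal D_+$) or Lemma \ref{lem2.2} (for $\boldsymbol a \in \mathcal I_+$), Schur-convexity of $\eta$ on the relevant domain reduces to showing that $\partial\eta/\partial a_i$ is monotone in $i$ in the appropriate direction along ordered $\boldsymbol a$. Fixing $1\le i\le j\le n$ with, say, $\boldsymbol a\in\mathcal I_+$ (so $a_i\le a_j$, hence $v_i\ge v_j$, hence $w_i\ge w_j$ since $\lambda_i\le\lambda_j$ there — this uses $\boldsymbol\lambda,\boldsymbol\theta\in\mathcal I_+$), I would compare factor by factor: (a) $\vartheta_{2i}(\boldsymbol v) \le \vartheta_{2j}(\boldsymbol v)$ by the same telescoping computation as in Theorem \ref{th3.1}, since $F_b^\alpha(w_i)\ge F_b^\alpha(w_j)$; (b) $[w^2 r_b(w)]_{w=w_i}\le [w^2 r_b(w)]_{w=w_j}$ by the monotonicity hypothesis together with $w_i\ge w_j$; (c) $\dfrac{\alpha F_b^{\alpha-1}(w)(1-F_b(w))}{1-F_b^\alpha(w)} = \dfrac{\omega(\alpha, F_b(w))}{\alpha\,F_b(w)\,\cdot\ }$... more cleanly, this ratio is $\dfrac{1-t}{t\,(1-t^\alpha)}\cdot\alpha t^\alpha$ with $t = F_b(w)$, i.e. $\omega(\alpha,t)/t$ up to the constant; since $\alpha\le 1$, Lemma \ref{lem2.4}(ii) gives that $\omega(\alpha,t)$ is decreasing in $t$, and one checks $1/t$ is also decreasing in $t$, so the whole factor is decreasing in $t = F_b(w)$, hence decreasing in $w$, hence (using $w_i\ge w_j$) the $i$-factor does not exceed the $j$-factor. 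Also the $1/(x-\lambda_i)$ factor is increasing in $i$ (since $\lambda_i\le\lambda_j$). Multiplying the nonnegative factors, $\partial\eta/\partial a_i \le \partial\eta/\partial a_j$, which is the increasing-in-$k$ condition required by Lemma \ref{lem2.2} for Schur-convexity on $\mathcal I_+$; the $\mathcal D_+$ case is symmetric and uses Lemma \ref{lem2.1}. Together with part (ii), Lemma \ref{lem2.3} then yields $\zeta_2(1/\boldsymbol\theta)\ge\zeta_2(1/\boldsymbol\delta)$, i.e. $\bar F_{X_{2:n}}(x)\ge\bar F_{Y_{2:n}}(x)$ for all $x$, which is $X_{2:n}\ge_{st}Y_{2:n}$.

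\textbf{Anticipated main obstacle.} The delicate point is bookkeeping the sign/direction of every factor simultaneously under the two different domain orderings $\mathcal I_+$ versus $\mathcal D_+$, and in particular making sure the stronger hypothesis ``$w^2 r_b(w)$ decreasing'' (rather than $w r_b(w)$ decreasing as in Theorem \ref{th3.2}) is used precisely where the extra power of $v_i$ coming from the chain rule $\partial v_i/\partial a_i = -1/a_i^2$ appears; I expect this factor-by-factor comparison in part (i) — especially verifying that the combined ``$\omega(\alpha,t)/t$''-type factor is monotone via Lemma \ref{lem2.4}(ii) — to be the crux, while everything else is a transcription of the arguments already given for Theorems \ref{th3.1} and \ref{th3.2}.
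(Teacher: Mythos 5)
Your proposal is correct and follows essentially the same route as the paper: both reduce the claim to Lemma \ref{lem2.3} by showing that $\bar F_{X_{2:n}}$, viewed as a function of $\boldsymbol\theta$, is increasing and Schur-convex, and both obtain the same partial derivative in which the chain rule produces exactly the $[w^2 r_b(w)]$ factor that the strengthened hypothesis controls. In fact you supply the factor-by-factor monotonicity verification (including the $\omega(\alpha,t)/t$ step via Lemma \ref{lem2.4}) that the paper omits with "similar to the proof of Theorem \ref{th3.1}."
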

\begin{proof}
Similar to the above theorems, here, we also present the proof for $\boldsymbol{\lambda},~ \boldsymbol{\theta},~\boldsymbol{\delta}\in\mathcal{I}_{+}.$ The other case is similar, and hence omitted. Denote $\zeta_{3}(1/\bm{\theta})=\bar{F}_{X_{2:n}}(x).$ Differentiating this partially with respect to $\theta_{i}$, for $i=1,\cdots,n$, we obtain
\begin{equation}\label{eqi27}
\frac{\partial\zeta_2\left(\frac{1}{\boldsymbol \theta}\right)}{\partial \theta_i}=\vartheta_{3i}\left(\frac{1}{\bm{\theta}}\right)\vartheta_3\left(\frac{1}{\theta_i}\right),
\end{equation}
where
\begin{eqnarray}\nonumber\label{eqi3.6}
\vartheta_{3i}\left(\frac{1}{\bm{\theta}}\right)&=&\sum_{\overset{l=1}{l\neq i}}^{n}\left[\prod_{{k\neq l}
}^{n}\left\{1-\left[F_{b}\left(\frac{x-\lambda_k}{\theta_k}\right)\right]^{\alpha}\right\}\right]-(n-1)\prod_{k=1}^{n}\left\{1-\left[F_{b}\left(\frac{x-\lambda_k}{\theta_k}\right)\right]^{\alpha}\right\}\text{ and }
\\
\vartheta_3\left( \frac{1}{\theta_i}\right)&=&\left[\frac{[w^2{{r}_{b}}(w)]_{w=\left(\frac{x-\lambda_i}{\theta_i}\right)}}{x-\lambda_i}\right]\left[\frac{\alpha F_{b}^{(\alpha-1)}\left(\frac{x-\lambda_i}{\theta_i}\right)\left[1-F_{b}\left(\frac{x-\lambda_i}{\theta_i}\right)\right]}{1-\left[F_{b}\left(\frac{x-\lambda_i}{\theta_i}\right)\right]^{\alpha}}\right].
\end{eqnarray}
To prove the stated result, it is enough to show that $\zeta_2\left(\frac{1}{\boldsymbol \theta}\right)$ is increasing and Schur-convex with respect to $\bm{\theta}\in \mathcal{I}_{+}.$ This is similar to the proof of Theorem \ref{th3.1}. Thus, it is omitted.
\end{proof}
The next corollary is a consequence of Theorem \ref{th3.3} and the result that $(\frac{1}{\theta_1},\cdots,\frac{1}{\theta_n})\succeq^{rm}(\frac{1}{\delta},\cdots,\frac{1}{\delta})$ holds for $n\delta\le \sum_{i=1}^{n}\theta_i$.
\begin{corollary}
	Let $\boldsymbol{X}\sim \mathbb{ELS}(
	\boldsymbol{\lambda},\boldsymbol{\theta},\alpha;F_{b})$ and $\boldsymbol{Y}\sim \mathbb{ELS}(
	\boldsymbol{\mu},\delta,\alpha;F_{b}),$ with $\boldsymbol{\lambda}=\boldsymbol{\mu}$ and $\alpha\leq1$. Also, let $ \boldsymbol{\lambda},~\boldsymbol{\theta}\in\mathcal{I}_+~(or~\mathcal{D}_+)$ and $w^2 r_{b}(w)$ be decreasing in $w>0$. Then, $n\delta\le \sum_{i=1}^{n}\theta_i\Rightarrow X_{2:n}\geq_{st}Y_{2:n}$.
\end{corollary}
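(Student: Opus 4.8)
The plan is to obtain this corollary as a direct application of Theorem \ref{th3.3}; the only substantive step is to show that the scalar hypothesis $n\delta\le\sum_{i=1}^{n}\theta_i$ forces the reciprocal majorization $1/\boldsymbol{\theta}\succeq^{rm}1/\boldsymbol{\delta}$, where $\boldsymbol{\delta}=\delta\boldsymbol{1}_n$. Once this is established, all remaining hypotheses of Theorem \ref{th3.3} are in place: $\boldsymbol{\lambda}=\boldsymbol{\mu}$, $\boldsymbol{\alpha}=\boldsymbol{\beta}=\alpha\boldsymbol{1}_n$ with $\alpha\le1$, $\boldsymbol{\lambda},\boldsymbol{\theta}\in\mathcal{I}_+$ (resp. $\mathcal{D}_+$) by assumption, the degenerate vector $\delta\boldsymbol{1}_n$ trivially lies in $\mathcal{I}_+\cap\mathcal{D}_+$, and $w^2 r_b(w)$ is decreasing in $w>0$; so Theorem \ref{th3.3} yields $X_{2:n}\ge_{st}Y_{2:n}$.

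To verify the reciprocal majorization I would unwind Definition \ref{def2.2}: we must check $1/\boldsymbol{\delta}\preceq^{rm}1/\boldsymbol{\theta}$, i.e. $\sum_{i=1}^{l}(1/\delta)_{i:n}^{-1}\le\sum_{i=1}^{l}(1/\boldsymbol{\theta})_{i:n}^{-1}$ for every $l=1,\dots,n$. The left-hand side is just $l\delta$, since the constant vector $1/\boldsymbol{\delta}$ has all ordered coordinates equal to $1/\delta$. For the right-hand side, when $\boldsymbol{\theta}\in\mathcal{I}_+$ the increasing rearrangement of $(1/\theta_1,\dots,1/\theta_n)$ is $1/\theta_n\le\cdots\le1/\theta_1$, so $\sum_{i=1}^{l}(1/\boldsymbol{\theta})_{i:n}^{-1}=\theta_n+\theta_{n-1}+\cdots+\theta_{n-l+1}$, the sum of the $l$ largest scale parameters; the $\mathcal{D}_+$ case gives the same expression after relabeling. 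Hence the inequality to be proved is $l\delta\le\theta_n+\cdots+\theta_{n-l+1}$ for all $l=1,\dots,n$.

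This follows from a one-line averaging estimate: the arithmetic mean of the $l$ largest of $\theta_1,\dots,\theta_n$ is at least the overall mean, so $\theta_n+\cdots+\theta_{n-l+1}\ge\frac{l}{n}\sum_{i=1}^{n}\theta_i\ge\frac{l}{n}\cdot n\delta=l\delta$, where the last step uses the hypothesis $n\delta\le\sum_{i=1}^{n}\theta_i$. This completes the verification, and the corollary follows. There is no serious obstacle here; the only points requiring care are the ordering conventions in the definition of $\preceq^{rm}$ (so that one indeed ends up comparing against the sum of the \emph{largest} $\theta_i$'s rather than the smallest) and the observation that the degenerate vector $\delta\boldsymbol{1}_n$ is an admissible argument for Theorem \ref{th3.3}.
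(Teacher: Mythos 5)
Your proposal is correct and follows exactly the paper's route: the paper derives this corollary by citing Theorem \ref{th3.3} together with the stated fact that $(1/\theta_1,\cdots,1/\theta_n)\succeq^{rm}(1/\delta,\cdots,1/\delta)$ whenever $n\delta\le\sum_{i=1}^{n}\theta_i$. You simply supply the short averaging verification of that fact (which the paper leaves implicit), and your handling of the ordering convention in $\preceq^{rm}$ and of the constant vector $\delta\boldsymbol{1}_n$ is accurate.
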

In the following result, we assume that both location and scale parameter vectors are different for different collections of random samples. The first part can be proved from Theorems \ref{th3.1} and \ref{th3.2}. The second one can be established based on Theorems \ref{th3.1} and \ref{th3.3}.
\begin{theorem}\label{th3.4}
	Suppose  $\boldsymbol{X}\sim \mathbb{ELS}(
	\boldsymbol{\lambda},\boldsymbol{\theta},\boldsymbol{\alpha};F_{b})$ and $\boldsymbol{Y}\sim \mathbb{ELS}(
	\boldsymbol{\mu},\boldsymbol{\delta},\boldsymbol{\beta};F_{b}),$ with $\boldsymbol{\alpha}=\boldsymbol{\beta}=\alpha \bm{1}_{n}\leq1$. Also, assume $\boldsymbol{\lambda}, ~\boldsymbol{\mu},~ \boldsymbol{\theta},~\boldsymbol{\delta}\in\mathcal{I}_+~(or~\mathcal{D}_+)$ and $w r_{b}(w)$ is decreasing in $w>0$. Then,
	\begin{itemize}
	 \item[(i)] ${\boldsymbol\lambda}\succeq_{w}{\boldsymbol\mu}$ and $1/{\boldsymbol\theta}\succeq^{w}{1/{\boldsymbol\delta}}\Rightarrow X_{2:n}\geq_{st}Y_{2:n}$.
	 \item[(ii)] ${\boldsymbol\lambda}\succeq_{w}{\boldsymbol\mu}$ and $1/{\boldsymbol\theta}\succeq^{rm}{1/{\boldsymbol\delta}}\Rightarrow X_{2:n}\geq_{st}Y_{2:n}$, provided $w^2 r_{b}(w)$ is decreasing in $w>0$.
	 \end{itemize}
\end{theorem}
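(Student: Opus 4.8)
The plan is to deduce Theorem~\ref{th3.4} directly from the earlier results by a transitivity-of-$\leq_{st}$ argument, introducing an intermediate random vector that bridges $\boldsymbol{X}$ and $\boldsymbol{Y}$. For part~(i), I would set $\boldsymbol{Z}\sim\mathbb{ELS}(\boldsymbol{\mu},\boldsymbol{\theta},\alpha\bm{1}_n;F_b)$, i.e. the model that shares the scale vector $\boldsymbol{\theta}$ of $\boldsymbol{X}$ but the location vector $\boldsymbol{\mu}$ of $\boldsymbol{Y}$. Since $\boldsymbol{\lambda},\boldsymbol{\theta},\boldsymbol{\mu}\in\mathcal{I}_+$ (or all in $\mathcal{D}_+$), $\boldsymbol{\alpha}=\boldsymbol{\beta}=\alpha\bm{1}_n\leq 1$, $wr_b(w)$ decreasing, and $\boldsymbol{\lambda}\succeq_w\boldsymbol{\mu}$, Theorem~\ref{th3.1} applied to the pair $(\boldsymbol{X},\boldsymbol{Z})$ (both with scale $\boldsymbol{\theta}$) yields $X_{2:n}\geq_{st}Z_{2:n}$. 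Next, $\boldsymbol{Z}$ and $\boldsymbol{Y}$ share the common location vector $\boldsymbol{\mu}$, have common shape $\alpha\bm{1}_n\leq 1$, and satisfy $\boldsymbol{\mu},\boldsymbol{\theta},\boldsymbol{\delta}\in\mathcal{I}_+$ with $1/\boldsymbol{\theta}\succeq^w 1/\boldsymbol{\delta}$; hence Theorem~\ref{th3.2} applied to $(\boldsymbol{Z},\boldsymbol{Y})$ gives $Z_{2:n}\geq_{st}Y_{2:n}$. Transitivity of $\leq_{st}$ then delivers $X_{2:n}\geq_{st}Y_{2:n}$.

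For part~(ii) the argument is structurally identical: use the same intermediate vector $\boldsymbol{Z}\sim\mathbb{ELS}(\boldsymbol{\mu},\boldsymbol{\theta},\alpha\bm{1}_n;F_b)$. The first leg $X_{2:n}\geq_{st}Z_{2:n}$ comes verbatim from Theorem~\ref{th3.1} (only $wr_b(w)$ decreasing is needed there). The second leg $Z_{2:n}\geq_{st}Y_{2:n}$ now follows from Theorem~\ref{th3.3} instead of Theorem~\ref{th3.2}, which is why the extra hypothesis ``$w^2 r_b(w)$ decreasing in $w>0$'' is imposed; the reciprocal-majorization condition $1/\boldsymbol{\theta}\succeq^{rm}1/\boldsymbol{\delta}$ together with the monotonicity and ordering assumptions on $\boldsymbol{\mu},\boldsymbol{\theta},\boldsymbol{\delta}$ are exactly what Theorem~\ref{th3.3} requires. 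Chaining the two stochastic inequalities again concludes $X_{2:n}\geq_{st}Y_{2:n}$.

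The only points that need care — and the main (mild) obstacle — are verifying that the intermediate vector $\boldsymbol{Z}$ simultaneously meets the hypotheses of both invoked theorems. Concretely: (a) $\boldsymbol{Z}$ must have scale vector exactly $\boldsymbol{\theta}$ so that Theorem~\ref{th3.1} (which demands equal scale vectors for the two compared families) applies to $(\boldsymbol{X},\boldsymbol{Z})$; (b) $\boldsymbol{Z}$ must have location vector exactly $\boldsymbol{\mu}$ so that Theorem~\ref{th3.2} or~\ref{th3.3} (which demand equal location vectors) applies to $(\boldsymbol{Z},\boldsymbol{Y})$; and (c) the ambient monotonicity class is consistent — all relevant vectors lie in $\mathcal{I}_+$ (or all in $\mathcal{D}_+$), which is guaranteed by the blanket assumption $\boldsymbol{\lambda},\boldsymbol{\mu},\boldsymbol{\theta},\boldsymbol{\delta}\in\mathcal{I}_+~(\text{or }\mathcal{D}_+)$. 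Once these bookkeeping checks are made, no new computation is required; the result is a clean corollary of Theorems~\ref{th3.1}, \ref{th3.2} and~\ref{th3.3} via the transitivity of the usual stochastic order, and the proof can be stated in a few lines.
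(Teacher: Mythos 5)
Your proposal is correct and matches the paper's intended argument: the paper itself states that part (i) follows from Theorems \ref{th3.1} and \ref{th3.2} and part (ii) from Theorems \ref{th3.1} and \ref{th3.3}, i.e., exactly the interpolation-plus-transitivity scheme you carry out with the intermediate vector $\boldsymbol{Z}\sim\mathbb{ELS}(\boldsymbol{\mu},\boldsymbol{\theta},\alpha\bm{1}_n;F_b)$. Your explicit verification that $\boldsymbol{Z}$ satisfies the hypotheses of both invoked theorems is the only content the paper leaves implicit, and it checks out.
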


Till now, we have proved various results to compare the second-order statistics in terms of the usual stochastic order. Next, we derive sufficient conditions for the comparison of the second-order statistics with respect to the hazard rate order. Note that the hazard rate order is of particular interest in the field of reliability theory because of the importance of the hazard rate function for various systems.  When the shape parameters are assumed to be $1$, the survival function of $X_{2:n}$ can be re-written as
\begin{align}\label{eq3.16}
	\bar{F}_{X_{2:n}}(x)&=
	\sum\limits_{i=1}^{n}\prod_{j\neq i}^{n}\bar{F_{b}}\left(\frac{x-\lambda_j}{\theta_j}\right)-(n-1)
\prod_{i=1}^{n}\bar{F_{b}}\left(\frac{x-\lambda_i}{\theta_i}\right)\nonumber\\
	 &=\prod_{i=1}^{n}\bar{F_{b}}\left(\frac{x-\lambda_i}{\theta_i}\right)\left[\sum\limits_{i=1}^{n}
\frac{1}{\bar{F_{b}}\left(\frac{x-\lambda_i}{\theta_i}\right)}-(n-1)\right]\nonumber\\
	 &=\prod_{i=1}^{n}\bar{F_{b}}\left(\frac{x-\lambda_i}{\theta_i}\right)\left[\sum\limits_{i=1}^{n}\frac{r_{b}
\left(\frac{x-\lambda_i}{\theta_i}\right)}{\tilde{r}_{b}\left(\frac{x-\lambda_i}{\theta_i}\right)}+1\right],
\end{align}
where $\tilde{r}_{b}$ is the reversed hazard rate function of the baseline distribution.  Thus, the hazard rate function of the second-order statistic $X_{2:n}$ can be obtained as
\begin{equation}\label{eq3.17}
r_{X_{2:n}}(x)=\sum\limits_{i=1}^{n}\frac{1}{\theta_{i}}r_{b}\left(\frac{x-\lambda_{i}}{\theta_{i}}\right)-
\left[\sum\limits_{i=1}^{n}\frac{1}{\theta_{i}}\left[\frac{r_b\left(\frac{x-\lambda_{i}}{\theta_{i}}\right)}
{\tilde{r}_b\left(\frac{x-\lambda_{i}}{\theta_{i}}\right)}\right]'\right]\left[\sum\limits_{i=1}^{n}\left[\frac{r_b
\left(\frac{x-\lambda_{i}}{\theta_{i}}\right)}{\tilde{r}_b\left(\frac{x-\lambda_{i}}{\theta_{i}}\right)}\right]
+1\right]^{-1}.
\end{equation}

The following result establishes that under some conditions, majorized location parameter vector produces system with smaller hazard rate. An interpretation of the next theorem is as follows. Let us consider two fail-safe systems $S$-I and $S$-II with components' lifetime vectors $\bm{X}$ and $\bm{Y}$, respectively. Under the condition that $S$-I and $S$-II have survived up to an arbitrary time point $t,$ the system with components' lifetime vector $\bm{X}$ (here, $S$-I) is more likely to fail in the immediate future than that with components' lifetime vector $\bm{Y}$ (here, $S$-II).
\begin{theorem}\label{th3.5}
	Let  $\boldsymbol{X}\sim \mathbb{ELS}(
	\boldsymbol{\lambda},\theta,\boldsymbol{\alpha};{F_{b}})$ and $\boldsymbol{Y}\sim \mathbb{ELS}(
	\boldsymbol{\mu},\theta,\boldsymbol{\beta};F_{b}),$ with  $\boldsymbol{\alpha}=\boldsymbol{\beta}=\bm{1}_n$. Also, consider $\boldsymbol{\lambda},~\boldsymbol{\mu}\in\mathcal{D}_+~(or~\mathcal{I}_+)$. Then,
	${\boldsymbol\lambda}\succeq^{m}{\boldsymbol\mu}\Rightarrow X_{2:n}\leq_{hr}Y_{2:n}$, provided $r_{b}(w)$ is decreasing, convex, $r_{b}(w)/\tilde{r}_{b}(w)$ is increasing, convex and $[r_{b}(w)/\tilde{r}_{b}(w)]''$ is decreasing in $w>0$.
\end{theorem}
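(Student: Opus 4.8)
The plan is to establish the hazard rate order $X_{2:n}\le_{hr}Y_{2:n}$ by showing that the hazard rate function $r_{X_{2:n}}(x)$, viewed as a function of the location parameter vector $\boldsymbol\lambda$ through the representation \eqref{eq3.17}, is Schur-convex on $\mathcal{D}_+$ (respectively $\mathcal I_+$). Since $\boldsymbol\lambda\succeq^m\boldsymbol\mu$ and $\theta_i=\theta$ for all $i$, Schur-convexity of $\boldsymbol\lambda\mapsto r_{X_{2:n}}(x)$ at each fixed $x$ would give $r_{X_{2:n}}(x)\ge r_{Y_{2:n}}(x)$ for all $x$, which is exactly $X_{2:n}\le_{hr}Y_{2:n}$. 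Writing $g(w)=r_b(w)$ and $h(w)=r_b(w)/\tilde r_b(w)$, and using $\theta_i\equiv\theta$, formula \eqref{eq3.17} becomes
\begin{equation*}
r_{X_{2:n}}(x)=\frac1\theta\sum_{i=1}^n g\!\left(\tfrac{x-\lambda_i}{\theta}\right)-\frac1\theta\,\frac{\sum_{i=1}^n h'\!\left(\tfrac{x-\lambda_i}{\theta}\right)}{1+\sum_{i=1}^n h\!\left(\tfrac{x-\lambda_i}{\theta}\right)}.
\end{equation*}
First I would note that the leading sum $\sum_i g((x-\lambda_i)/\theta)$ is Schur-convex in $\boldsymbol\lambda$ precisely because $g=r_b$ is convex (a separable sum of convex functions composed with an affine map is Schur-convex); but since it enters with a plus sign and the second term with a minus sign, I actually need the whole expression to be Schur-convex, so the second (fractional) term must be handled with care — it needs to be Schur-\emph{concave} in $\boldsymbol\lambda$, or at least its negative must not destroy the Schur-convexity of the first term.

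The main work is therefore to analyze the fractional term $\Phi(\boldsymbol\lambda)=\big(\sum_i h'((x-\lambda_i)/\theta)\big)\big/\big(1+\sum_i h((x-\lambda_i)/\theta)\big)$. I would invoke Lemma~\ref{lem2.1} (for $\mathcal D_+$) or Lemma~\ref{lem2.2} (for $\mathcal I_+$): it suffices to check that the partial derivative of $r_{X_{2:n}}(x)$ with respect to $\lambda_k$ is monotone in $k$ in the appropriate direction. Concretely, with $u_k=(x-\lambda_k)/\theta$ (so that on $\mathcal D_+$ the $u_k$ are increasing in $k$), I would compute $\partial r_{X_{2:n}}(x)/\partial\lambda_k$ as a function of $u_k$ and the two scalar aggregates $A=\sum_i h'(u_i)$ and $B=1+\sum_i h(u_i)$, and then show this function is monotone in $u_k$ using the hypotheses: $r_b$ decreasing and convex, $h=r_b/\tilde r_b$ increasing and convex, and $h''=[r_b/\tilde r_b]''$ decreasing. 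The derivative $\partial/\partial\lambda_k$ of the first term brings down $-\frac1{\theta^2}g'(u_k)$, which is increasing in $u_k$ since $g'=r_b'$ is increasing (as $r_b$ is convex); the derivative of $-\frac1\theta\,A/B$ produces terms of the form $-\frac1{\theta^2}\big(h''(u_k)B - h'(u_k)A\big)/B^2$, and here I would use convexity of $h$ (so $h'\ge0$, $h''\ge0$, $A\ge0$) together with monotonicity of $h''$ to control the sign and monotonicity of this expression in $u_k$.

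The hard part will be precisely this monotonicity-in-$u_k$ check for the fractional term, because $h''(u_k)B-h'(u_k)A$ mixes the local quantity at coordinate $k$ with the global aggregates $A,B$, and one must verify the combined sign works out uniformly; the hypothesis that $h''$ is decreasing is exactly what is needed to dominate the $h'(u_k)A$ contribution. I would also need to be slightly careful about the overall sign bookkeeping: the chain rule gives $\partial u_k/\partial\lambda_k=-1/\theta<0$, so "Schur-convex in $\boldsymbol\lambda$" translates into a monotonicity condition on $\partial r_{X_{2:n}}/\partial\lambda_k$ in $k$ that, after accounting for the sign flip, matches the hypotheses. Once the partial derivative is shown to be decreasing in $k$ on $\mathcal D_+$ (equivalently increasing in $k$ on $\mathcal I_+$), Lemma~\ref{lem2.1} (resp. Lemma~\ref{lem2.2}) yields Schur-convexity of $r_{X_{2:n}}(x)$ in $\boldsymbol\lambda$, hence $r_{X_{2:n}}(x)\ge r_{Y_{2:n}}(x)$ for all $x$, completing the proof.
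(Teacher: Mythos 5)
Your proposal follows essentially the same route as the paper's proof: rewrite $r_{X_{2:n}}$ in terms of $g=r_b/\tilde r_b$, differentiate with respect to $\lambda_k$, show the difference $\partial r_{X_{2:n}}/\partial\lambda_i-\partial r_{X_{2:n}}/\partial\lambda_j$ has the required sign using exactly the stated hypotheses ($r_b'$ increasing from convexity, $g'$ increasing with $\sum g'\ge 0$, $g''$ decreasing), and invoke Lemma~\ref{lem2.2} (resp.\ Lemma~\ref{lem2.1}) to conclude Schur-convexity of the hazard rate in $\boldsymbol\lambda$. One small correction to your sketch: the quotient rule gives the fractional contribution as $+\theta^{-2}g''(u_k)B^{-1}-\theta^{-1}g'(u_k)AB^{-2}$ (not $-\theta^{-2}\bigl(g''(u_k)B-g'(u_k)A\bigr)B^{-2}$), and with this form each of the three terms in the difference of partials is \emph{separately} nonpositive on $\mathcal{I}_+$, so no domination of one term by another is actually needed.
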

\begin{proof}
Under the present set-up, the hazard rate function of $X_{2:n}$ is given by
\begin{equation}\label{eq3.18}
r_{X_{2:n}}(x)=\sum\limits_{i=1}^{n}\frac{1}{\theta}r_{b}\left(\frac{x-\lambda_{i}}
{\theta}\right)-\left[\sum\limits_{i=1}^{n}\frac{1}{\theta}g^{'}\left(\frac{x-\lambda_{i}}
	{\theta}\right)\right]\left[\sum\limits_{i=1}^{n}g\left(\frac{x-\lambda_{i}}{\theta}\right)+1\right]^{-1},
\end{equation}
where $g\left(\frac{x-\lambda_{i}}
{\theta}\right)=r_{b}\left(\frac{x-\lambda_{i}}
	{\theta}\right)/\tilde{r}_{b}\left(\frac{x-\lambda_{i}}
	{\theta}\right).$ On differentiating (\ref{eq3.18}) partially with respect to $\lambda_i$, for $i=1,\cdots,n$, we obtain
	\begin{align}\label{eq3.19}
	\frac{\partial r_{X_{2:n}}(x)}{\partial \lambda_{i}}=&-\frac{1}{{\theta}^2}r^{'}_{b}\left(\frac{x-\lambda_i}{\theta}\right)+\left[\frac{1}{{\theta}^2}g^{''}\left(\frac{x-\lambda_{i}}{\theta}\right)\right]\left[\sum\limits_{i=1}^{n}g\left(\frac{x-\lambda_{i}}{\theta}\right)+1\right]^{-1}\nonumber\\
	 ~&-\left[\frac{1}{\theta}g^{'}\left(\frac{x-\lambda_{i}}{\theta}\right)\sum\limits_{i=1}^{n}\frac{1}{\theta}g^{'}\left(\frac{x-\lambda_{i}}{\theta}\right)\right]\left[\sum\limits_{i=1}^{n}g\left(\frac{x-\lambda_{i}}{\theta}\right)+1\right]^{-2}.
	\end{align}
	The proof will be completed, if we can show that the function $r_{X_{2:n}}(x)$ given by (\ref{eq3.18}) is Schur-convex with respect to $\boldsymbol{\lambda}\in\mathcal{D}_+~(or~\mathcal{I}_+).$ Now, consider
	\begin{align}\label{eq3.20}
	\frac{\partial r_{X_{2:n}}(x)}{\partial \lambda_i}-\frac{\partial r_{X_{2:n}}(x)}{\partial \lambda_j}=&-\frac{1}{{\theta}^2}{r}_{b}'\left(\frac{x-\lambda_i}{\theta}\right)+\frac{1}{{\theta}^2}{r}_{b}'\left(\frac{x-\lambda_j}{\theta}\right)\nonumber\\
	 &+\left[\frac{1}{{\theta}^2}g^{''}\left(\frac{x-\lambda_{i}}{\theta}\right)-\frac{1}{{\theta}^2}g^{''}\left(\frac{x-\lambda_{j}}{\theta}\right)\right]\left[\sum\limits_{i=1}^{n}g\left(\frac{x-\lambda_{i}}{\theta}\right)+1\right]^{-1}\nonumber\\
	 &+\left[-\frac{1}{\theta}g^{'}\left(\frac{x-\lambda_{i}}{\theta}\right)+\frac{1}{\theta}g^{'}\left(\frac{x-\lambda_{j}}{\theta}\right)\right]\left[\sum\limits_{i=1}^{n}\frac{1}{\theta}g^{'}\left(\frac{x-\lambda_{i}}{\theta}\right)\right]\nonumber\\
	 ~&\times\left[\sum\limits_{i=1}^{n}g\left(\frac{x-\lambda_{i}}{\theta}\right)+1\right]^{-2}.
	\end{align}
	Let $\bm{\lambda}\in \mathcal{I}_{+}$.  Then, for $1\le i\le j \le n,$ we have $\lambda_i\le \lambda_j$ implies $\frac{x-\lambda_i}{\theta}\ge \frac{x-\lambda_j}{\theta}$. It is assumed that $r_{b}(w)$ is decreasing and convex. Thus, $-r_{b}'(w)|_{w=\frac{x-\lambda_i}{\theta}}\le -r_{b}'(w)|_{w=\frac{x-\lambda_j}{\theta}}$. Further, $r_{b}(w)/\tilde{r}_{b}(w)$ is increasing and convex. As a result, $-\frac{1}{\theta}g'(w)|_{w=\frac{x-\lambda_i}{\theta}}\le -\frac{1}{\theta}g'(w)|_{w=\frac{x-\lambda_j}{\theta}}$. Again, $[r_{b}(w)/\tilde{r}_{b}(w)]''$ is decreasing. So, $\frac{1}{\theta^2}g''(w)|_{w=\frac{x-\lambda_i}{\theta}}\le \frac{1}{\theta^2}g''(w)|_{w=\frac{x-\lambda_j}{\theta}}$. Combining these inequalities, we obtain $$	 \frac{\partial r_{X_{2:n}}(x)}{\partial \lambda_i}-\frac{\partial r_{X_{2:n}}(x)}{\partial \lambda_j}\leq0,$$  for $\bm{\lambda}\in \mathcal{I}_+.$ Thus, the rest of the proof readily follows from Lemma \ref{lem2.2}.
\end{proof}

By utilizing Lemma $2(i)$ of \cite{hazra2017stochastic}, we get the following corollary, which is a direct consequence of Theorem \ref{th3.5}. Particularly, here, we study comparisons of the lifetimes of fail-safe systems, one with heterogeneous components and other with identical components.
\begin{corollary}\label{col3.5}
	Consider two sets of independent random vectors $\boldsymbol{X}\sim \mathbb{ELS}(
	\boldsymbol{\lambda},\theta,\boldsymbol{\alpha};{F_{b}})$ and $\boldsymbol{Y}\sim \mathbb{ELS}(
	\lambda,\theta,\boldsymbol{\beta};F_{b}),$ with  $\boldsymbol{\alpha}=\boldsymbol{\beta}=\bm{1}_n$. Let $\boldsymbol{\lambda}\in\mathcal{D}_+~(or~\mathcal{I}_+)$. Then,
	$\lambda=\sum_{i=1}^{n}\lambda_i/n\Rightarrow X_{2:n}\leq_{hr}Y_{2:n}$, provided $r_{b}(w)$ is decreasing, convex, $r_{b}(w)/\tilde{r}_{b}(w)$ is increasing, convex and $[r_{b}(w)/\tilde{r}_{b}(w)]''$ is decreasing with respect to $w>0$.
\end{corollary}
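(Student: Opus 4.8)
The plan is to obtain Corollary~\ref{col3.5} as a direct specialization of Theorem~\ref{th3.5}. Write $\bar{\lambda}=\frac{1}{n}\sum_{i=1}^{n}\lambda_i$ for the arithmetic mean of the coordinates of $\boldsymbol{\lambda}$. Under the hypothesis $\lambda=\bar{\lambda}$, the random vector $\boldsymbol{Y}$ in the statement is exactly $\boldsymbol{Y}\sim\mathbb{ELS}(\bar{\lambda}\bm{1}_n,\theta,\bm{1}_n;F_b)$, so it suffices to verify that $\boldsymbol{X}$ and $\boldsymbol{Y}$ meet all the hypotheses of Theorem~\ref{th3.5} with the location vector of $\boldsymbol{Y}$ taken to be $\boldsymbol{\mu}=\bar{\lambda}\bm{1}_n$, and in particular that $\boldsymbol{\lambda}\succeq^{m}\boldsymbol{\mu}$.

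First I would record the elementary majorization relation $\boldsymbol{\lambda}\succeq^{m}\bar{\lambda}\bm{1}_n$, that is, $\bar{\lambda}\bm{1}_n\preceq^{m}\boldsymbol{\lambda}$ in the sense of Definition~\ref{def2.2}. This is precisely Lemma~$2(i)$ of \cite{hazra2017stochastic}; alternatively it is immediate from the definition, since for each $l=1,\dots,n-1$ the average $\frac{1}{l}\sum_{i=1}^{l}\lambda_{i:n}$ of the $l$ smallest coordinates of $\boldsymbol{\lambda}$ cannot exceed the overall mean $\bar{\lambda}$, whence $\sum_{i=1}^{l}(\bar{\lambda}\bm{1}_n)_{i:n}=l\bar{\lambda}\ge\sum_{i=1}^{l}\lambda_{i:n}$, while $\sum_{i=1}^{n}(\bar{\lambda}\bm{1}_n)_{i:n}=n\bar{\lambda}=\sum_{i=1}^{n}\lambda_{i:n}$.

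It then remains to check the remaining hypotheses of Theorem~\ref{th3.5}. The common scale $\theta$ and the common shape vectors $\boldsymbol{\alpha}=\boldsymbol{\beta}=\bm{1}_n$, as well as the regularity conditions imposed on the baseline distribution ($r_b$ decreasing and convex, $r_b/\tilde{r}_b$ increasing and convex, and $[r_b/\tilde{r}_b]''$ decreasing on $(0,\infty)$), are carried over verbatim from the statement of the corollary. The only point requiring a line of verification is the membership $\boldsymbol{\mu}=\bar{\lambda}\bm{1}_n\in\mathcal{D}_+~(\text{or }\mathcal{I}_+)$; but since every $\lambda_i>0$ forces $\bar{\lambda}>0$, the constant vector $\bar{\lambda}\bm{1}_n$ lies simultaneously in $\mathcal{D}_+$ and in $\mathcal{I}_+$, hence in whichever of the two cones contains $\boldsymbol{\lambda}$. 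With all hypotheses of Theorem~\ref{th3.5} in force, that theorem yields $X_{2:n}\le_{hr}Y_{2:n}$, which is the claim. There is no genuine obstacle here: the entire content of the corollary is already packaged in Theorem~\ref{th3.5}, and the proof amounts to recognizing $\bar{\lambda}\bm{1}_n$ as a vector majorized by $\boldsymbol{\lambda}$ that sits in the appropriate ordered cone.
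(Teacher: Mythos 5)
Your proposal is correct and follows exactly the paper's route: the corollary is obtained by applying Theorem \ref{th3.5} with $\boldsymbol{\mu}=\bar{\lambda}\bm{1}_n$, using the standard majorization fact $\boldsymbol{\lambda}\succeq^{m}\bar{\lambda}\bm{1}_n$ (Lemma $2(i)$ of \cite{hazra2017stochastic}) and noting that the constant vector lies in the required ordered cone.
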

This corollary can be used to obtain a lower bound of the hazard rate function of a fail-safe system having independent heterogeneous components in terms of the hazard rate function of a fail-safe system with independent identical components. A lower bound is given by
\begin{eqnarray}
r_{X_{2:n}}(x)\ge \frac{n}{\theta}r_{b}\left(\frac{x-\lambda}
{\theta}\right)-\left[\frac{n}{\theta}g^{'}\left(\frac{x-\lambda}
{\theta}\right)\right]\left[ng\left(\frac{x-\lambda}{\theta}\right)+1\right]^{-1},
\end{eqnarray}
where $g\left(\frac{x-\lambda}
{\theta}\right)=r_{b}\left(\frac{x-\lambda}
{\theta}\right)/\tilde{r}_{b}\left(\frac{x-\lambda}
{\theta}\right).$ It is noted that the conditions for the baseline distribution in Corollary \ref{col3.5} are satisfied by the Pareto distribution function $F(x)=1-x^{-a},~x\ge 1,~1<a<2.$ For this distribution, we have
\begin{eqnarray}
r_{X_{2:n}}(x)\geq \frac{{n}\left[a+(an-1)\left[\left(\frac{x-\lambda}{\theta}\right)^a+1\right]\right]}{\theta\left[n\left(\frac{x-\lambda}{\theta}\right)^a+1-n\right]} .
\end{eqnarray}

The following result provides conditions, under which the hazard rate ordering holds between the second-order statistics. Here, we have taken a common scale parameter vector for both sets of observations.
\begin{theorem}\label{th3.6}
	Let  $\boldsymbol{X}\sim \mathbb{ELS}(
	\boldsymbol{\lambda},\boldsymbol{\theta},\boldsymbol{\alpha};{F_{b}})$ and $\boldsymbol{Y}\sim \mathbb{ELS}(
	\boldsymbol{\mu},\boldsymbol{\delta},\boldsymbol{\beta};F_{b}),$ with $\boldsymbol{\delta}=\boldsymbol{\theta}$ and $\boldsymbol{\alpha}=\boldsymbol{\beta}=\bm{1}_n$. Also, consider $\boldsymbol{\lambda},~ \boldsymbol{\theta},~\boldsymbol{\mu}\in\mathcal{D}_+~(or~\mathcal{I}_+)$. Then,
	${\boldsymbol\lambda}\succeq^{m}{\boldsymbol\mu}\Rightarrow X_{2:n}\leq_{hr}Y_{2:n}$, provided $r_{b}(w)$ is decreasing, $w^2r'_{b}(w)$ is increasing,  $r_{b}(w)/\tilde{r}_{b}(w)$ is increasing and convex and $w^2[r_{b}(w)/\tilde{r}_{b}(w)]''$ is decreasing in $w>0$.
\end{theorem}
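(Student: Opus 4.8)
Because $\boldsymbol{\delta}=\boldsymbol{\theta}$ and $\boldsymbol{\alpha}=\boldsymbol{\beta}=\bm{1}_n$, both $r_{X_{2:n}}(x)$ and $r_{Y_{2:n}}(x)$ are given by (\ref{eq3.17}), the latter obtained from the former by replacing each $\lambda_i$ by $\mu_i$ while the common vector $\boldsymbol{\theta}$ is kept fixed. Hence ${\boldsymbol\lambda}\succeq^{m}{\boldsymbol\mu}\Rightarrow X_{2:n}\leq_{hr}Y_{2:n}$ will follow once I show that, for each fixed $x$ and fixed $\boldsymbol{\theta}$, the map $\boldsymbol{\lambda}\mapsto r_{X_{2:n}}(x)$ is Schur-convex on $\mathcal{D}_+$ (respectively $\mathcal{I}_+$); the conclusion then comes from Lemma \ref{lem2.1} (respectively Lemma \ref{lem2.2}). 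I would carry out the $\mathcal{I}_+$ case in detail, the $\mathcal{D}_+$ case being symmetric. Put $g=r_b/\tilde r_b$ and $w_i=(x-\lambda_i)/\theta_i$; since $g=F_b/\bar F_b\ge0$ we have $D:=\sum_{k}g(w_k)+1\ge1$.

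First I would differentiate (\ref{eq3.17}) with respect to $\lambda_i$, obtaining an expression with the same structure as (\ref{eq3.19}) but with $\theta$ replaced by $\theta_i$: a term $-r_b'(w_i)/\theta_i^{2}$, a term $g''(w_i)/(\theta_i^{2}D)$, and a term $-g'(w_i)N/(\theta_i D^{2})$, where $N:=\sum_{k}g'(w_k)/\theta_k$. For $1\le i\le j\le n$ I would then form $\partial r_{X_{2:n}}(x)/\partial\lambda_i-\partial r_{X_{2:n}}(x)/\partial\lambda_j$ and rewrite the coefficients through $1/\theta_i^{2}=w_i^{2}/(x-\lambda_i)^{2}$ and $1/\theta_i=w_i/(x-\lambda_i)$. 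On $\mathcal{I}_+$, $i\le j$ forces $\lambda_i\le\lambda_j$ and $\theta_i\le\theta_j$, hence $w_i\ge w_j$, $x-\lambda_i\ge x-\lambda_j>0$, and $1/\theta_i\ge1/\theta_j$; moreover $g$ increasing gives $g'\ge0$, so $N\ge0$.

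The core is a term-by-term sign check of the difference, which splits into an $r_b'$-contribution $-r_b'(w_i)/\theta_i^{2}+r_b'(w_j)/\theta_j^{2}$, a $g''$-contribution $D^{-1}(g''(w_i)/\theta_i^{2}-g''(w_j)/\theta_j^{2})$, and a $g'$-contribution $-ND^{-2}(g'(w_i)/\theta_i-g'(w_j)/\theta_j)$. (i) Since $r_b$ is decreasing, $w^{2}r_b'(w)\le0$; as $w^{2}r_b'(w)$ is increasing and $w_i\ge w_j$, $w_j^{2}r_b'(w_j)\le w_i^{2}r_b'(w_i)\le0$, and dividing by $(x-\lambda_i)^{2}\ge(x-\lambda_j)^{2}>0$ gives $r_b'(w_i)/\theta_i^{2}\ge r_b'(w_j)/\theta_j^{2}$, so the $r_b'$-contribution is $\le0$. (ii) Since $g$ is convex, $w^{2}g''(w)\ge0$; as $w^{2}g''(w)$ is decreasing and $w_i\ge w_j$, $0\le w_i^{2}g''(w_i)\le w_j^{2}g''(w_j)$, and dividing by $(x-\lambda_i)^{2}\ge(x-\lambda_j)^{2}$ gives $g''(w_i)/\theta_i^{2}\le g''(w_j)/\theta_j^{2}$; since $D\ge1>0$, the $g''$-contribution is $\le0$. (iii) Since $g$ is increasing and convex, $g'\ge0$ is increasing, so $0\le g'(w_j)\le g'(w_i)$; together with $0<1/\theta_j\le1/\theta_i$ this gives $g'(w_i)/\theta_i\ge g'(w_j)/\theta_j$, and since $N\ge0$ and $D^{-2}>0$ the $g'$-contribution is $\le0$ as well. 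Summing, $\partial r_{X_{2:n}}(x)/\partial\lambda_i\le\partial r_{X_{2:n}}(x)/\partial\lambda_j$ for $i\le j$, so $r_{X_{2:n}}(x)$ is Schur-convex in $\boldsymbol{\lambda}$ on $\mathcal{I}_+$ by Lemma \ref{lem2.2}; the $\mathcal{D}_+$ case is handled in the same way with Lemma \ref{lem2.1} in place of Lemma \ref{lem2.2} and all inequalities reversed. This completes the proof.

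The step I expect to need the most care is this three-part comparison: because $\boldsymbol{\theta}$ is now a vector, each coefficient must be split as $1/\theta_i^{k}=w_i^{k}/(x-\lambda_i)^{k}$, and the signs of $r_b'$ and $g''$ must be pinned down (this is exactly why the decrease of $r_b$ and the convexity of $g$ are assumed) so that dividing by the ordered quantities $(x-\lambda_i)^{2}\ge(x-\lambda_j)^{2}$ preserves, rather than reverses, the relevant inequalities. The hypotheses that $w^{2}r_b'(w)$ be increasing and $w^{2}[r_b/\tilde r_b]''(w)$ be decreasing replace the convexity of $r_b$ and the monotonicity of $[r_b/\tilde r_b]''$ used in Theorem \ref{th3.5} precisely in order to absorb these additional $\theta_i$-factors. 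It is also essential here that $\boldsymbol{\theta}\in\mathcal{I}_+$, and not just $\boldsymbol{\lambda},\boldsymbol{\mu}\in\mathcal{I}_+$, both to guarantee $w_i\ge w_j$ and to order $1/\theta_i\ge1/\theta_j$.
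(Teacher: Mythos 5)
Your proposal is correct and follows essentially the same route as the paper: differentiate (\ref{eq3.17}) in $\lambda_i$, form the difference of partials for $i\le j$, split it into the $r_b'$-, $g''$- and $g'$-contributions (the paper's $T_1,T_2,T_3$), and conclude Schur-convexity via Lemma \ref{lem2.2} (resp.\ \ref{lem2.1}). The only difference is that you carry out the sign check of the three terms explicitly — including the rewriting $1/\theta_i^{k}=w_i^{k}/(x-\lambda_i)^{k}$ needed to exploit the hypotheses on $w^{2}r_b'(w)$ and $w^{2}g''(w)$ — whereas the paper dismisses this with ``under the assumptions made, it can be shown that''; your verification is sound.
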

\begin{proof}
	On differentiating (\ref{eq3.17}) with respect to $\lambda_{i}$, we have
	\begin{align}
	\frac{\partial r_{X_{2:n}}(x)}{\partial \lambda_{i}}=&-\frac{1}{{\theta_i}^2}r^{'}_{b}\left(\frac{x-\lambda_i}{\theta_i}\right)+
	 \left[\frac{1}{{\theta_{i}}^2}g^{''}\left(\frac{x-\lambda_{i}}{\theta_{i}}\right)\right]\left[\sum\limits_{i=1}^{n}g\left(\frac{x-\lambda_{i}}{\theta_{i}}\right)+1\right]^{-1}\nonumber\\
	 &-\left[\frac{1}{\theta_{i}}g^{'}\left(\frac{x-\lambda_{i}}{\theta_{i}}\right)\sum\limits_{i=1}^{n}\frac{1}{\theta_{i}}g^{'}\left(\frac{x-\lambda_{i}}{\theta_{i}}\right)\right]
	\left[\sum\limits_{i=1}^{n}g\left(\frac{x-\lambda_{i}}{\theta_{i}}\right)+1\right]^{-2},
	\end{align}
	where $g\left(\frac{x-\lambda_{i}}
	{\theta}\right)=r_{b}\left(\frac{x-\lambda_{i}}
	{\theta}\right)/\tilde{r}_{b}\left(\frac{x-\lambda_{i}}
	{\theta}\right).$ Now, consider
	\begin{align}\label{eq3.15}
	\frac{\partial r_{X_{2:n}}(x)}{\partial \lambda_i}-\frac{\partial r_{X_{2:n}}(x)}{\partial \lambda_j}\overset{sign}{=} T_1+T_2+T_3,
	\end{align}
	where
	\begin{eqnarray*}
	T_1&=&	 \left[-\frac{1}{{\theta_i}^2}{r}_{b}'\left(\frac{x-\lambda_i}{\theta_i}\right)+\frac{1}{{\theta_j}^2}{r}_{b}'\left(\frac{x-\lambda_j}{\theta_j}\right)\right]\left[\sum\limits_{i=1}^{n}g\left(\frac{x-\lambda_{i}}{\theta_{i}}\right)+1\right]^2,\\
	 T_2&=&\left[\frac{1}{(x-\lambda_{i})^2}\left(\frac{x-\lambda_{i}}{\theta_{i}}\right)^2g^{''}\left(\frac{x-\lambda_{i}}{\theta_{i}}\right)-\frac{1}{(x-\lambda_{j})^2}\left(\frac{x-\lambda_{j}}{\theta_{j}}\right)^2g^{''}\left(\frac{x-\lambda_{j}}{\theta_{j}}\right)\right]
	\\
	&~&\times\left[\sum\limits_{i=1}^{n}g\left(\frac{x-\lambda_{i}}{\theta_{i}}\right)
	+1\right]~\mbox{and}\\
	 T_3&=&\left[-\frac{1}{\theta_{i}}g^{'}\left(\frac{x-\lambda_{i}}{\theta_{i}}\right)+\frac{1}{\theta_{j}}g^{'}\left(\frac{x-\lambda_{j}}{\theta_{j}}\right)\right]\left[\sum\limits_{i=1}^{n}\frac{1}{\theta_{i}}g^{'}\left(\frac{x-\lambda_{i}}{\theta_{i}}\right)\right].
	\end{eqnarray*}
	Further, under the assumptions made, it can be shown that
	$$	\frac{\partial r_{X_{2:n}}(x)}{\partial \lambda_i}-\frac{\partial r_{X_{2:n}}(x)}{\partial \lambda_j}\geq(\leq) ~0 ,$$
	for $\bm{\lambda}\in\mathcal{D}_+~(\mbox{or}~\mathcal{I}_+).$ Thus, $r_{X_{2:n}}(x)$ is Schur-convex with respect to $\boldsymbol{\lambda}\in\mathcal{D}_+~(\mbox{or}~\mathcal{I}_+).$ Hence, the desired result readily follows.
\end{proof}

Next, we assume that the location parameters are equal and fixed, and the scale parameter vectors are different. The stated result provides sufficient conditions, under which we get a better fail-safe system in the sense of the instantaneous failure rate. Particularly, it states that the system with majorized reciprocal of the scale parameter vector yields a system with smaller hazard rate function.
\begin{theorem}\label{th3.7}
	Let  $\boldsymbol{X}\sim \mathbb{ELS}(
	\boldsymbol{\lambda},\boldsymbol{\theta},\boldsymbol{\alpha};{F_{b}})$ and $\boldsymbol{Y}\sim \mathbb{ELS}(
	\boldsymbol{\mu},\boldsymbol{\delta},\boldsymbol{\beta};F_{b}),$ with $\boldsymbol{\lambda}=\boldsymbol{\mu}=\lambda \bm{1}_{n}$ and $\boldsymbol{\alpha}=\boldsymbol{\beta}=\bm{1}_n$. Further, consider $ \boldsymbol{\theta},~\boldsymbol{\delta}\in\mathcal{D}_+~(or~\mathcal{I}_+)$. Then,
	$1/{\boldsymbol\theta}\succeq^{m}1/{\boldsymbol\delta}\Rightarrow X_{2:n}\geq_{hr}Y_{2:n}$, provided $wr_{b}(w)$, $[r_{b}(w)/\tilde{r}_{b}(w)]$ are increasing, concave and $w[r_{b}(w)/\tilde{r}_{b}(w)]'$ is convex with respect to $w>0.$
\end{theorem}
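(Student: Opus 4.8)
The plan is to mimic the proof of Theorem \ref{th3.5}, working now with the general scale-parameter version of the hazard rate function \eqref{eq3.17}, but viewed as a function of the reciprocal scales $v_i = 1/\theta_i$. First I would set $\bm{\lambda}=\bm{\mu}=\lambda\bm{1}_n$ and $\bm{\alpha}=\bm{\beta}=\bm{1}_n$ in \eqref{eq3.17}, writing $r_{X_{2:n}}(x)$ as a function $\eta(\bm{v})$ of $\bm{v}=(1/\theta_1,\dots,1/\theta_n)$; concretely, each summand becomes $v_i\,r_b\big(v_i(x-\lambda)\big)$ in the first sum, and $g\big(v_i(x-\lambda)\big)$, $v_i g'\big(v_i(x-\lambda)\big)$ in the correction term, where $g=r_b/\tilde r_b$. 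Since $1/\bm{\theta}\succeq^m 1/\bm{\delta}$ means $\bm{v}_X \succeq^m \bm{v}_Y$, by Lemma \ref{lem2.1} (for $\bm{v}\in\mathcal{D}_+$) or Lemma \ref{lem2.2} (for $\bm{v}\in\mathcal{I}_+$), it suffices to show that $\eta(\bm{v})$ is Schur-concave in $\bm{v}$, i.e. that $\partial\eta/\partial v_i - \partial\eta/\partial v_j$ has the appropriate sign whenever $v_i$ and $v_j$ are ordered. The conclusion $X_{2:n}\ge_{hr}Y_{2:n}$ (note the reversed direction compared with Theorem \ref{th3.5}) is consistent with Schur-concavity of the hazard rate in $\bm{v}$.

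Next I would compute $\partial\eta/\partial v_i$ explicitly. Differentiating the first sum gives a term of the form $\big(r_b(w_i)+w_i r_b'(w_i)\big)(x-\lambda)$ with $w_i=v_i(x-\lambda)$, which is $(x-\lambda)\cdot\frac{d}{dv_i}\big[v_i r_b(w_i)\big]$ — and this is exactly $(x-\lambda)$ times the derivative of $w r_b(w)$ evaluated appropriately, so the hypothesis that $w r_b(w)$ is increasing and concave controls both its sign and its monotonicity in $w_i$. Differentiating the correction term produces (after the chain rule) contributions involving $(x-\lambda)\big[g(w_i)+w_i g'(w_i)\big]'$-type expressions and $\big(g'(w_i)+w_i g''(w_i)\big)$-type expressions; the hypotheses that $g$ is increasing and concave and that $w g'(w)$ is convex are precisely what is needed so that the relevant pieces are monotone in $w_i$ with the right sign. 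I would then form the difference $\partial\eta/\partial v_i - \partial\eta/\partial v_j$, group it (as in the $T_1+T_2+T_3$ decomposition of Theorem \ref{th3.6}) into the "first-sum" part, the "$g''$" part weighted by $\big[\sum g(w_k)+1\big]^{-1}$, and the "$g'$ squared" part weighted by $\big[\sum g(w_k)+1\big]^{-2}$, and check that each group has a consistent sign when $w_i\ge w_j$ (equivalently $v_i\ge v_j$ with $x>\lambda$ fixed).

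The main obstacle, as in Theorems \ref{th3.5} and \ref{th3.6}, will be the cross term coming from $\big[\sum_k v_k g'(w_k)\big]$ multiplied by the difference $\big[v_i g'(w_i)-v_j g'(w_j)\big]$: its overall sign is not immediately a consequence of the monotonicity of $w g'(w)$ alone, because $g'$ itself (being the derivative of a concave increasing function) is positive and decreasing, while $w g'(w)=wg'$ is assumed convex; reconciling these to pin down the sign of the whole group is the delicate step. I expect one handles it exactly as the referenced proofs do — using $w g'(w)$ convex together with $g$ concave to show $v g'(w)=w g'(w)/(x-\lambda)$ is monotone in the required direction, and noting that $\sum_k v_k g'(w_k)>0$ so the sign of the product is governed by the bracketed difference alone. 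Once all three groups are shown to share the sign dictated by the ordering of $v_i,v_j$, Schur-concavity of $\eta(\bm{v})$ follows, and Lemma \ref{lem2.1} (respectively Lemma \ref{lem2.2}) yields $r_{Y_{2:n}}(x)\ge r_{X_{2:n}}(x)$, i.e. $X_{2:n}\ge_{hr}Y_{2:n}$, for $\bm{\theta},\bm{\delta}\in\mathcal{D}_+$ (respectively $\mathcal{I}_+$), completing the proof.
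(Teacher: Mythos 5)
Your proposal follows essentially the same route as the paper's proof: reparametrize by $m_i=1/\theta_i$, show the hazard rate $\Phi(\bm{m})$ in \eqref{eq3.24} is Schur-concave via the $T_1+T_2+T_3$ decomposition of $\partial\Phi/\partial m_i-\partial\Phi/\partial m_j$, and invoke Lemmas \ref{lem2.1}/\ref{lem2.2}. The only quibble is that the cross term is actually easier than you anticipate: in the paper's $T_3$ the bracketed difference is $(x-\lambda)\left[g'(w_i)-g'(w_j)\right]$ rather than $v_ig'(w_i)-v_jg'(w_j)$, so concavity of $g$ (making $g'$ decreasing) together with $g'\ge 0$ settles its sign directly, without needing monotonicity of $wg'(w)$.
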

\begin{proof}
	Rewrite the hazard rate function of $X_{2:n}$ as
	\begin{eqnarray}\label{eq3.24}
	 r_{X_{2:n}}(x)&=&\Phi(\boldsymbol{m})\nonumber\\
&=&\left[\sum\limits_{i=1}^{n}m_{i}r_{b}
\left((x-\lambda)m_{i}\right)\right]-\left[\sum\limits_{i=1}^{n}m_{i}g^{'}
\left((x-\lambda)m_{i}\right)\right]\left[\sum\limits_{i=1}^{n}g\left((x-\lambda)m_{i}\right)+1\right]^{-1},\nonumber\\
	\end{eqnarray}
	where $g\left((x-\lambda)m_i
	\right)=r_{b}\left((x-\lambda)m_i\right)/\tilde{r}_{b}\left((x-\lambda)m_i\right)$, $m_{i}=1/\theta_i,~i=1,\cdots,n$ and $\bm{m}=(m_1,\cdots,m_n).$
	After differentiating Equation \eqref{eq3.24} partially with respect to $m_{i}$, we obtain
	\begin{align}\label{eq3.25}
	\frac{\partial \Phi(\boldsymbol{m})}{\partial m_{i}}\overset{sign}{=}&\left[\frac{d}{dw}\left[wr_{b}(w)\right]_{w=\left((x-\lambda)m_{i}\right)}\right]
\left[\sum\limits_{i=1}^{n}g\left((x-\lambda)m_{i}\right)+1\right]^2-\left[\frac{d}{dw}\left[wg'(w)
\right]_{w=\left((x-\lambda)m_{i}\right)}\right]\nonumber\\
	 \times&\left[\sum\limits_{i=1}^{n}g\left((x-\lambda)m_{i}\right)+1\right]+
\left[[(x-\lambda)g^{'}(w)]_{w=\left((x-\lambda){m_{i}}\right)}\right]\left[
\sum\limits_{i=1}^{n}m_{i}g^{'}\left((x-\lambda_{i}){m_{i}}\right)\right].
	\end{align}
	To prove the result, it is sufficient to show that  $\Phi(\boldsymbol{m})$ is Schur-concave with respect to $\boldsymbol{m}\in\mathcal{I}_+~(or~\mathcal{D}_+).$ Using \eqref{eq3.25}, we have
	\begin{align}\label{eq3.26}
	\frac{\partial \Phi(\boldsymbol{m})}{\partial m_i}-\frac{\partial \Phi(\boldsymbol{m})}{\partial m_j}&\overset{sign}{=} T_1 + T_2 + T_3,
	\end{align}
	where
	\begin{eqnarray*}
	T_1&=& \left[\frac{d}{dw}\left[wr_{b}(w)\right]_{w=\left((x-\lambda)m_{i}\right)}-\frac{d}{dw}\left[wr_{b}(w)
\right]_{w=\left((x-\lambda)m_{j}\right)}\right]\left[\sum\limits_{i=1}^{n}g\left((x-\lambda){m_{i}}
\right)+1\right]^2,\\
	T_2 &=& \left[-\frac{d}{dw}\left[wg'(w)\right]_{w=\left((x-\lambda)m_{i}\right)}+\frac{d}{dw}\left[wg'(w)\right]_{w
=\left((x-\lambda)m_{j}\right)}\right]\left[\sum\limits_{i=1}^{n}g\left((x-\lambda)){m_{i}}\right)+1\right]~\mbox{and}\\
	T_3 &=& \left[(x-\lambda)[g^{'}(w)]_{w=\left((x-\lambda){m_{i}}\right)}-(x-\lambda)[g^{'}(w)]_{w=\left((x-\lambda){m_{j}}\right)}\right]\left[\sum\limits_{i=1}^{n}m_{i}[g^{'}(w)]_{w=\left((x-\lambda){m_{i}}\right)}\right].
	\end{eqnarray*}
	Now, applying the given assumptions and then, after some simplification, we obtain
	$$	\frac{\partial \Phi(\boldsymbol{m})}{\partial m_i}-\frac{\partial \Phi(\boldsymbol{m})}{\partial m_j}\geq(\leq)~0, \text{ for each } m_i\in\mathcal{I}_+~(or~\mathcal{D}_+).$$
	Thus, the result follows from Lemma \ref{lem2.2} (\ref{lem2.1})  and Definition \ref{def2.3}. This completes the proof of the theorem.
\end{proof}
\begin{remark}
	Let $F_b(x)=\frac{x-1}{x+1},~x\geq1$ be the cumulative distribution function of the baseline distribution. It can be shown that for this baseline distribution, $wr_{b}(w)$ and $[r_{b}(w)/\tilde{r}_{b}(w)]$ are increasing, concave and $w[r_{b}(w)/\tilde{r}_{b}(w)]'$ is convex  with respect to $w>0$.
\end{remark}
The following corollary is a direct consequence of the result given in the immediately preceding theorem.
\begin{corollary}\label{col3.6}
	Suppose  $\boldsymbol{X}\sim \mathbb{ELS}(
	\lambda,\boldsymbol{\theta},\boldsymbol{\alpha};F_{b})$ and $\boldsymbol{Y}\sim \mathbb{ELS}(
	\lambda,\theta,\boldsymbol{\beta};F_{b}),$ with $\boldsymbol{\alpha}=\boldsymbol{\beta}=\bm{1}_n$. Also, assume $  \boldsymbol{\theta}\in\mathcal{I}_+~(or~\mathcal{D}_+).$ Then,
	$1/ \theta= \sum_{i=1}^{n}1/{(n\theta_i)}\Rightarrow X_{2:n}\geq_{hr}Y_{2:n}$, provided $wr_{b}(w)$ and $[r_{b}(w)/\tilde{r}_{b}(w)]$ are increasing, concave and $w[r_{b}(w)/\tilde{r}_{b}(w)]'$ is convex  with respect to $w>0$.
	
\end{corollary}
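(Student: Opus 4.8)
The plan is to derive Corollary~\ref{col3.6} as an immediate consequence of Theorem~\ref{th3.7} by exhibiting a suitable majorization relation. First I would set $\bm{\theta}=(\theta_1,\dots,\theta_n)$ with $\bm{\theta}\in\mathcal{I}_+$ (the case $\mathcal{D}_+$ is analogous), and take $\bm{\delta}=\theta\bm{1}_n$ where $\theta$ is chosen so that $1/\theta=\frac{1}{n}\sum_{i=1}^n 1/\theta_i$, i.e.\ the reciprocal of $\theta$ equals the arithmetic mean of the reciprocals $1/\theta_i$. With $\bm{m}=(1/\theta_1,\dots,1/\theta_n)$ and $\bm{m}'=(1/\theta,\dots,1/\theta)$, the vector $\bm{m}'$ is the constant vector having the same coordinate sum as $\bm{m}$; hence $\bm{m}\succeq^{m}\bm{m}'$, that is, $1/\bm{\theta}\succeq^{m}1/\bm{\delta}$. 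This is just the elementary fact that any vector majorizes the constant vector with the same sum.

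Next I would verify that the remaining hypotheses of Theorem~\ref{th3.7} are met: the shape parameters satisfy $\bm{\alpha}=\bm{\beta}=\bm{1}_n$ by assumption; the location parameters are the common scalar $\lambda$, so $\bm{\lambda}=\bm{\mu}=\lambda\bm{1}_n$; and the baseline conditions --- $wr_b(w)$ and $r_b(w)/\tilde r_b(w)$ increasing and concave, and $w[r_b(w)/\tilde r_b(w)]'$ convex on $w>0$ --- are carried over verbatim from the corollary's statement. Then, applying Theorem~\ref{th3.7} with these $\bm{\theta}$ and $\bm{\delta}$, the majorization $1/\bm{\theta}\succeq^{m}1/\bm{\delta}$ yields $X_{2:n}\geq_{hr}Y_{2:n}$, which is exactly the assertion of the corollary.

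There is essentially no obstacle here: the only content is recognizing that the condition $1/\theta=\sum_{i=1}^n 1/(n\theta_i)$ is precisely what makes $1/\bm{\delta}$ the arithmetic-mean (constant) vector majorized by $1/\bm{\theta}$, after which Theorem~\ref{th3.7} applies directly. If one wishes to be slightly more careful, one should note that $\bm{\delta}=\theta\bm{1}_n$ trivially lies in both $\mathcal{I}_+$ and $\mathcal{D}_+$, so the requirement $\bm{\theta},\bm{\delta}\in\mathcal{D}_+~(\text{or}~\mathcal{I}_+)$ in Theorem~\ref{th3.7} is satisfied whenever $\bm{\theta}$ is monotone, matching the hypothesis $\bm{\theta}\in\mathcal{I}_+~(\text{or}~\mathcal{D}_+)$ of the corollary. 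Thus the proof reduces to one line invoking Theorem~\ref{th3.7}, and I would simply state that the corollary follows immediately from Theorem~\ref{th3.7} together with the majorization $(1/\theta_1,\dots,1/\theta_n)\succeq^{m}(1/\theta,\dots,1/\theta)$ valid when $n/\theta=\sum_{i=1}^n 1/\theta_i$.
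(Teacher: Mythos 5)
Your proposal is correct and matches the paper's own route: the paper also obtains this corollary as a direct consequence of Theorem \ref{th3.7} by taking $\bm{\delta}=\theta\bm{1}_n$ and using the fact that $(1/\theta_1,\dots,1/\theta_n)\succeq^{m}(1/\theta,\dots,1/\theta)$ when $n/\theta=\sum_{i=1}^{n}1/\theta_i$. Your added remark that the constant vector $\bm{\delta}$ automatically lies in $\mathcal{I}_+$ and $\mathcal{D}_+$ is a correct and slightly more careful verification than the paper bothers to spell out.
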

Till, we have established ordering results between the second-order statistics arising from two independent batches of independent heterogeneous random observations. However, there are many practical situations, where the components of a system may have a structural dependence due to various reasons. The components may be manufactured by the same company, or by different companies but using similar technology. This results in a set of statistically dependent observations. In the following subsection, we consider that the collection of random observations are interdependent.
\subsection{Interdependent case}
In this subsection, we consider two fail-safe systems with heterogeneous exponentiated location-scale family distributed components. The additional assumption is that the components are dependent. The dependency is structured by Archimedean copula with generator $\psi$. Consider two $n$-dimensional random vectors $\bm{X}$ and $\bm{Y}$, where the $i$-th components of each vectors follow exponentiated location-scale models. Notationally, $X_{i}\sim \mathbb{ELS}(\lambda_i,\theta_i,\alpha_i;F_b,\psi)$ and  $Y_{i}\sim \mathbb{ELS}(\mu_i,\delta_i,\beta_i;F_b,\psi)$, for $i=1,\cdots,n.$ In terms of the vector notation, $\bm{X}\sim \mathbb{ELS}(\bm{\lambda},\bm{\theta},\bm{\alpha};F_b,\psi)$ and  $\bm{Y}\sim \mathbb{ELS}(\bm{\mu},\bm{\delta},\bm{\beta};F_b,\psi)$. Under the general set-up, the survival functions of $X_{2:n}$ and $Y_{2:n}$ are respectively obtained as
\begin{align}\label{eq3.27}
\small\bar{F}_{X_{2:n}}(x)=&\sum\limits_{l=1}^{n}\psi\left[\sum\limits_{k\neq l}^{n}\phi\left\{1-\left[F_{b}\left(\frac{x-\lambda_k}{\theta_k}\right)\right]^{\alpha_k}\right\}\right]-(n-1)\psi\left[\sum\limits_{k=1}^{n}\phi\left\{1-\left[F_{b}\left(\frac{x-\lambda_k}{\theta_k}\right)\right]^{\alpha_k}\right\}\right],\\
\small\bar{G}_{Y_{2:n}}(y)=&\sum\limits_{l=1}^{n}\psi\left[\sum\limits_{k\neq l}^{n}\phi\left\{1-\left[F_{b}\left(\frac{y-\mu_k}{\delta_k}\right)\right]^{\beta_k}\right\}\right]-(n-1)\psi\left[\sum\limits_{k=1}^{n}\phi\left\{1-\left[F_{b}\left(\frac{y-\mu_k}{\delta_k}\right)\right]^{\beta_k}\right\}\right],
\end{align}
where $x>\max\{\lambda_1,\cdots,\lambda_n \}$  and $y>\max\{\mu_1,\cdots,\mu_n\}.$ The first theorem in this subsection shows that the usual stochastic order exists between the second-order statistics under sub weak majorization order of the location parameter vectors. The proof is obtained using Lemmas \ref{lem2.1}, \ref{lem2.2} and Theorem $A.8$ of \cite{Marshall2011}.

\begin{theorem}\label{th3.8}
	Let  $\boldsymbol{X}\sim \mathbb{ELS}(
	\boldsymbol{\lambda},\boldsymbol{\theta},\boldsymbol{\alpha};F_{b},\psi)$ and $\boldsymbol{Y}\sim \mathbb{ELS}(
	\boldsymbol{\mu},\boldsymbol{\delta},\boldsymbol{\beta};F_{b},\psi),$ with $\boldsymbol{\alpha}=\boldsymbol{\beta}=\alpha \bm{1}_n\leq 1$ and $\boldsymbol{\theta}=\boldsymbol{\delta}$. Assume $\boldsymbol{\lambda},~ \boldsymbol{\theta},~\boldsymbol{\mu}\in\mathcal{I}_+~(or~\mathcal{D}_+)$ and $w r_{b}(w)$  is decreasing in $w>0$. Then, $\boldsymbol{\lambda}\succeq_{w}\boldsymbol{\mu}\Rightarrow X_{2:n}\geq_{st}Y_{2:n}$, provided $\psi$ is log-concave.
\end{theorem}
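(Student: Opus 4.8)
The plan is to mimic the structure of the proof of Theorem \ref{th3.1}, replacing the finite products over $\{1-F_b^\alpha\}$ with their Archimedean analogues $\psi[\sum \phi(\cdot)]$. Write $\bar F_{X_{2:n}}(x)=\zeta(\boldsymbol\lambda)$ using \eqref{eq3.27} with $\alpha_k$ replaced by the common $\alpha$. Since the hazard rate order is only claimed via the usual stochastic order, and by Theorem $A.8$ of \cite{Marshall2011} the implication $\boldsymbol\lambda\succeq_w\boldsymbol\mu\Rightarrow\zeta(\boldsymbol\lambda)\ge\zeta(\boldsymbol\mu)$ follows once we show $\zeta$ is increasing and Schur-convex on $\mathcal I_+$, it suffices to establish these two monotonicity facts. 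By Lemma \ref{lem2.2}, Schur-convexity on $\mathcal I_+$ follows if $\partial\zeta/\partial\lambda_i$ is increasing in $i$ (for $1\le i\le j\le n$, we need $\partial\zeta/\partial\lambda_i\le\partial\zeta/\partial\lambda_j$ when $\lambda_i\le\lambda_j,\ \theta_i\le\theta_j$), together with each partial derivative being nonnegative for the ``increasing'' part.

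First I would compute $\partial\zeta(\boldsymbol\lambda)/\partial\lambda_i$. Let $u_k=1-[F_b((x-\lambda_k)/\theta_k)]^\alpha$, so $\partial u_i/\partial\lambda_i=\tfrac{1}{\theta_i}\alpha F_b^{\alpha-1}f_b>0$, and by the chain rule each term $\psi[\sum_{k\in S}\phi(u_k)]$ (for $S=\{1,\dots,n\}\setminus\{l\}$ or $S=\{1,\dots,n\}$, whenever $i\in S$) contributes $\psi'[\cdot]\phi'(u_i)\,\partial u_i/\partial\lambda_i$. Using $\phi=\psi^{-1}$ so that $\phi'(u_i)=1/\psi'(\phi(u_i))$, the derivative factors as a product of a positive term depending only on $u_i$ (essentially $\vartheta_1(\lambda_i)$ from Theorem \ref{th3.1}, which is already shown to be positive and increasing in $\lambda_i$ under ``$wr_b(w)$ decreasing'' via Lemma \ref{lem2.4}) and a copula-dependent factor
\[
\Theta_i(\boldsymbol\lambda)=\sum_{\substack{l=1\\ l\ne i}}^n\frac{\psi'\!\left[\sum_{k\ne l}\phi(u_k)\right]}{\psi'(\phi(u_i))}-(n-1)\frac{\psi'\!\left[\sum_{k=1}^n\phi(u_k)\right]}{\psi'(\phi(u_i))}.
\]
The two tasks are then: (a) show $\Theta_i(\boldsymbol\lambda)\ge 0$, and (b) show $\Theta_i(\boldsymbol\lambda)$ is increasing in $\lambda_i$ and that $\Theta_i\ge\Theta_j$ whenever $u_i\le u_j$ (which holds since $(x-\lambda_i)/\theta_i\ge(x-\lambda_j)/\theta_j$).

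The main obstacle will be part (a)/(b): controlling the sign and monotonicity of the copula factor $\Theta_i$. This is exactly where log-concavity of $\psi$ enters. Log-concavity means $\psi'/\psi$ is decreasing, equivalently $\psi'$ decreases ``faster'' than $\psi$; more usefully here, for a log-concave generator the ratio $\psi'(a)/\psi'(b)$ behaves monotonically as the arguments shift, and since $\sum_{k\ne l}\phi(u_k)\le\sum_{k=1}^n\phi(u_k)$ (as $\phi\ge 0$) while $\psi'<0$ with $|\psi'|$ comparable across these close arguments, one gets $\psi'[\sum_{k\ne l}\phi(u_k)]\ge\psi'[\sum_k\phi(u_k)]$ pointwise; summing the $n-1$ such inequalities against the single subtracted term gives $\Theta_i\ge 0$. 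For monotonicity in $\lambda_i$, note increasing $\lambda_i$ decreases $u_i$, hence increases $\phi(u_i)$, which shifts every bracketed argument and the denominator $\psi'(\phi(u_i))$; log-concavity of $\psi$ is precisely the hypothesis that makes the resulting ratio move in the right direction. I would verify these sign comparisons carefully using $(-1)^i\psi^{(i)}\ge 0$ (so $\psi'\le 0$, $\psi''\ge 0$) together with log-concavity, and then conclude by invoking Lemma \ref{lem2.2} to get Schur-convexity and the nonnegativity of the partials to get monotonicity, finishing via Theorem $A.8$ of \cite{Marshall2011}. The case $\boldsymbol\lambda,\boldsymbol\theta,\boldsymbol\mu\in\mathcal D_+$ is symmetric, using Lemma \ref{lem2.1} instead.
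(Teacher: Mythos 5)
Your high-level strategy coincides with the paper's: differentiate $\bar F_{X_{2:n}}$ with respect to $\lambda_i$, factor the derivative, show each factor is nonnegative and suitably monotone in the index, and conclude via Lemma \ref{lem2.2} and Theorem $A.8$ of \cite{Marshall2011}. The execution, however, has concrete sign and grouping problems. First, the inequality you invoke for $\Theta_i\ge 0$ is backwards: every Archimedean generator is convex, so $\psi'$ is increasing, and $\sum_{k\ne l}\phi(u_k)\le\sum_{k}\phi(u_k)$ gives $\psi'\bigl[\sum_{k\ne l}\phi(u_k)\bigr]\le\psi'\bigl[\sum_{k}\phi(u_k)\bigr]$, not $\ge$; it is this ``$\le$'' (a consequence of convexity, available for free, not of log-concavity) that, after division by the negative quantity $\psi'(\phi(u_i))$, yields $\Theta_i\ge 0$. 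Second, your target ``$\Theta_i\ge\Theta_j$'' is also in the wrong direction: Lemma \ref{lem2.2} requires the partial derivatives to be \emph{increasing} in the index on $\mathcal{I}_+$, so with all factors nonnegative you need $\Theta_i\le\Theta_j$ for $i\le j$.

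More importantly, the factorization you propose cannot deliver that comparison factor-wise. Your $\Theta_i$ carries the bare factor $-1/\psi'(\phi(u_i))$; since $\psi'$ is negative and increasing while $\phi(u_i)\ge\phi(u_j)$ for $i\le j$, one gets $-1/\psi'(\phi(u_i))\ge -1/\psi'(\phi(u_j))$ --- the wrong way --- so the product of this with the (correctly ordered) copula sum cannot be compared across indices term by term. The paper's proof resolves exactly this point by regrouping: it pairs $\psi(t_i)$ with $1/\psi'(t_i)$ to form $-\psi(t_i)/\psi'(t_i)$, which \emph{is} decreasing in $t_i$ precisely because $\psi$ is log-concave ($\psi'/\psi$ decreasing), and compensates by dividing the remaining scalar factor by $\psi(t_i)=1-F_b^{\alpha}$, so that it becomes $\bigl[\alpha t^{\alpha-1}(1-t)/(1-t^{\alpha})\bigr]\cdot[wr_b(w)]/(x-\lambda_i)$, each piece of which is ordered by Lemma \ref{lem2.4} and the hypothesis that $wr_b(w)$ is decreasing. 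This cancellation-and-regrouping is the essential step of the proof and the exact point where log-concavity enters; your sketch gestures at it (``log-concavity makes the resulting ratio move in the right direction'') but never identifies the ratio $-\psi/\psi'$ or the compensating division, so as written the argument does not close.
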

\begin{proof}
	 Denote
$\Psi_1(\boldsymbol{\lambda})=\bar{F}_{X_{2:n}}(x),$ where $\alpha_k$ is replaced by $\alpha$ in (\ref{eq3.27}). Here, we need to show that $\Psi_1(\boldsymbol{\lambda})$ is increasing and  Schur-convex with respect to $\boldsymbol{\lambda}\in \mathcal{I}_+~(or~\mathcal{D}_+).$
For convenience, we further denote  ${t_i=\phi[1-[F_{b}(\frac{x-\lambda_i}{\theta_i})]^{\alpha}]}.$
Differentiating $\Psi_1(\boldsymbol{\lambda})=\bar{F}_{X_{2:n}}(x),$ where $\bar{F}_{X_{2:n}}(x)$ is given by \eqref{eq3.27} with respect to $\lambda_i,$ for $i=1,\cdots,n$, we have
\begin{equation}\label{eq4}
\frac{\partial\Psi_1(\boldsymbol{\lambda})}{\partial\lambda_i}=\varrho_{1i}(\bm{\lambda})\varrho_1(\lambda_i),
\end{equation}
where
\begin{eqnarray*}
\varrho_{1i}(\bm{\lambda})&=&(n-1)\psi'\left(\sum\limits_{k=1}^{n}t_k\right)-\sum\limits_{l\notin\{i,j\}}^{n}\psi'\left(\sum\limits_{k\neq l}^{n}t_k\right)
-\psi'\left(t_i+\sum\limits_{k\notin\{i,j\}}^{n}\left\{t_k\right\}\right)~\mbox{and}
\\
\varrho_1(\lambda_i)&=&-\left[\frac{{\psi}({t_i})}{{\psi}'\left(t_i\right) }\right]\left[\frac{\alpha t^{\alpha-1}(1-t)}{1-t^{\alpha}}\right]_{t=F_{b}\left(\frac{x-\lambda_i}{\theta_i}\right)}\left[\frac{[wr_{b}(w)]_{w=\left(\frac{x-\lambda_i}{\theta_i}\right)}}{x-\lambda_i}\right].
\end{eqnarray*}
Let the vectors belong to $\mathcal{I}_+$. Then, for $1\leq i\leq j \leq n,$ we have $\theta_i\leq\theta_j$ and $\lambda_i\leq\lambda_j.$ Thus, we obtain $\frac{x-\lambda_i}{\theta_i}\geq\frac{x-\lambda_j}{\theta_j}$ and $[F_{b}(\frac{x-\lambda_i}{\theta_i})]^{\alpha}\geq[F_{b}(\frac{x-\lambda_j}{\theta_j})]^{\alpha}$.  Hence, $t_i\geq t_j$, for all $i=1,\cdots,n.$ Now, for $l=1,\cdots,n$, by the properties of the generator of an Archimedean copula, we have
\begin{eqnarray}
\psi'\left(\sum\limits_{k=1}^{n}t_k\right)&\geq& \psi'\left(\sum\limits_{k\neq l}^{n}t_k\right)\nonumber\\
\Rightarrow (n-1)\psi'\left(\sum\limits_{k=1}^{n}t_k\right)&\geq& \sum\limits_{l\neq i}^{n}\psi'\left(\sum\limits_{k\neq l}^{n}t_k\right),
\end{eqnarray}
where $i=1,\cdots,n.$
 Thus,
 \begin{align}
 \varrho_{1i}(\bm{\lambda})&=(n-1)\psi'\left(\sum\limits_{k=1}^{n}t_k\right)-\sum\limits_{l\notin\{i,j\}}^{n}\psi'\left(\sum\limits_{k\neq l}^{n}t_k\right)
 -\psi'\left(\sum\limits_{k\neq j}^{n}\left\{t_k\right\}\right)\nonumber\\
 &=(n-1)\psi'\left(\sum\limits_{k=1}^{n}t_k\right)-\sum\limits_{l\neq i}^{n}\psi'\left(\sum\limits_{k\neq l}^{n}t_k\right)\nonumber\\&\geq 0.
 \end{align}
Furthermore, $\varrho_1(\lambda_i)$ is positive, since  $\psi(x)$ is decreasing. We will show that  both $\varrho_{1i}(\bm{\lambda})$ and $ \varrho_1(\lambda_i)$ are increasing with respect to $\lambda_i,$ for $i=1,\cdots,n.$ Note that $\psi(x)$ is decreasing and convex. So, $\psi'(x)$ is negative and increasing. Again, $\phi[1-[F_{b}(\frac{x-\lambda_i}{\theta_i})]^{\alpha}]$ is decreasing in $\lambda_i,$ for $i=1,\cdots,n.$ Thus, $\varrho_{1i}(\bm{\lambda})$ is increasing in $\lambda_i,$ for all $i= 1,\cdots,n.$ Under the assumptions made and Lemma \ref{lem2.4}, we obtain the following three inequalities:
\begin{eqnarray*}\label{eq3.32}
-\left[\frac{{\psi}({t_i})}{{\psi}'\left(t_i\right) }\right]&\leq&	  -\left[\frac{{\psi}({t_j})}{{\psi}'\left(t_j\right) }\right], \\		\left[\frac{\alpha t^{\alpha-1}(1-t)}{1-t^{\alpha}}\right]_{t=F_{b}\left(\frac{x-\lambda_i}{\theta_i}\right)}&\leq&\left[\frac{\alpha t^{\alpha-1}(1-t)}{1-t^{\alpha}}\right]_{t=F_{b}\left(\frac{x-\lambda_j}{\theta_j}\right)},\\
\left[w r_{b}(w)\right]_{w=\left(\frac{x-\lambda_i}{\theta_i}\right)}&\leq&\left[w r_{b}(w)\right]_{w=\left(\frac{x-\lambda_j}{\theta_j}\right)}.
\end{eqnarray*}
Making use of these inequities, it can be shown that $\varrho_1(\lambda_i)$ is increasing  with respect to $\lambda_i,$ for all $i=1,\cdots,n.$ Thus,  $\Psi_1(\boldsymbol{\lambda})$ is increasing and  Schur-convex with respect to $\boldsymbol{\lambda}\in \mathcal{I}_+.$ Rest of the proof follows from Theorem $A.8$ of \cite{Marshall2011}. We note that the proof for the other case is similar and thus, it is omitted for the sake of conciseness.
\end{proof}

In the previous theorem, we assume that the scale parameters of the model are equal and vector-valued. Next, we consider $\boldsymbol{\theta}=\boldsymbol{\delta}=\theta \bm{1}_{n}$ and obtain similar result. The proof is omitted.
\begin{theorem}\label{th3.8*}
Under the set-up as in Theorem \ref{th3.8}, we further assume that $\boldsymbol{\theta}=\boldsymbol{\delta}=\theta \bm{1}_{n}$. Let $\boldsymbol{\lambda},~\boldsymbol{\mu}\in\mathcal{I}_+~(or~\mathcal{D_+})$ and $ r_{b}(w)$ be decreasing in $w>0$. Then, $\boldsymbol{\lambda}\succeq_{w}\boldsymbol{\mu}\Rightarrow X_{2:n}\geq_{st}Y_{2:n}$, provided $\psi$ is log-concave.
\end{theorem}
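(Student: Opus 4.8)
The plan is to reprove the statement by repeating the argument of Theorem~\ref{th3.8} in the special case $\boldsymbol{\theta}=\boldsymbol{\delta}=\theta\bm{1}_n$; note that this is \emph{not} a mere corollary of Theorem~\ref{th3.8}, since the hypothesis is genuinely weakened from ``$wr_b(w)$ decreasing'' to ``$r_b(w)$ decreasing'', and this weakening has to be justified. Put $\Psi_1^{*}(\boldsymbol{\lambda})=\bar{F}_{X_{2:n}}(x)$, where $\bar{F}_{X_{2:n}}(x)$ is obtained from \eqref{eq3.27} by replacing every $\alpha_k$ by $\alpha$ and every $\theta_k$ by $\theta$. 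Exactly as in Theorem~\ref{th3.8}, by Theorem~$A.8$ of \cite{Marshall2011} it suffices to show that $\Psi_1^{*}(\boldsymbol{\lambda})$ is increasing and Schur-convex with respect to $\boldsymbol{\lambda}\in\mathcal{I}_+$ (the case $\boldsymbol{\lambda}\in\mathcal{D}_+$ being entirely analogous).

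First I would differentiate $\Psi_1^{*}$ partially and factor, just as in the proof of Theorem~\ref{th3.8}, $\partial\Psi_1^{*}(\boldsymbol{\lambda})/\partial\lambda_i=\varrho_{1i}(\boldsymbol{\lambda})\,\varrho_1^{*}(\lambda_i)$, where $\varrho_{1i}(\boldsymbol{\lambda})$ is precisely the factor appearing there (it depends on $\boldsymbol{\lambda}$ only through the quantities $t_k:=\phi(1-[F_b((x-\lambda_k)/\theta)]^{\alpha})$ and the derivative $\psi'$), while the second factor now simplifies to
\[
\varrho_1^{*}(\lambda_i)=-\,\frac{\psi(t_i)}{\psi'(t_i)}\cdot\left[\frac{\alpha t^{\alpha-1}(1-t)}{1-t^{\alpha}}\right]_{t=F_{b}((x-\lambda_{i})/\theta)}\cdot\frac{1}{\theta}\,r_{b}\!\left(\frac{x-\lambda_{i}}{\theta}\right),
\]
because $[wr_{b}(w)]_{w=(x-\lambda_i)/\theta}\big/(x-\lambda_i)=r_{b}((x-\lambda_i)/\theta)/\theta$ when the scale is the common scalar $\theta$. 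This identity is the crux of the matter: the factor that in Theorem~\ref{th3.8} forced the stronger hypothesis on $wr_b(w)$ here collapses to $\theta^{-1}r_b((x-\lambda_i)/\theta)$, so that ``$r_b(w)$ decreasing'' is exactly what is needed to make it monotone in $\lambda_i$.

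Next I would verify that each factor is nonnegative and increasing in $\lambda_i$, so that $\Psi_1^{*}$ is increasing in each argument. The sign and the monotonicity in $\lambda_i$ of $\varrho_{1i}(\boldsymbol{\lambda})$ follow verbatim from the proof of Theorem~\ref{th3.8}: log-concavity of $\psi$ makes $\psi$ decreasing and convex, hence $\psi'$ negative and increasing, while $t_i$ is decreasing in $\lambda_i$ (since $\phi$ is decreasing), and the properties of the Archimedean generator give $\varrho_{1i}(\boldsymbol{\lambda})\ge 0$ together with its monotonicity. For $\varrho_1^{*}(\lambda_i)$, all three displayed factors are positive, and each increases with $\lambda_i$ on $\mathcal{I}_+$: the first by log-concavity of $\psi$ applied to $t_i\ge t_j$ whenever $\lambda_i\le\lambda_j$; the second by Lemma~\ref{lem2.4} (the part stating that $\omega(\alpha,t)$ is decreasing in $t$ for $\alpha\le1$), noting that $\frac{\alpha t^{\alpha-1}(1-t)}{1-t^{\alpha}}=t^{-1}\omega(\alpha,t)$ is a product of two factors each decreasing in $t$ and that raising $\lambda_i$ lowers $t=F_b((x-\lambda_i)/\theta)$; and the third because $r_b$ is decreasing while $(x-\lambda_i)/\theta$ decreases as $\lambda_i$ grows. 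Combining these, and appealing to Lemma~\ref{lem2.2} (respectively Lemma~\ref{lem2.1} for $\mathcal{D}_+$) and Definition~\ref{def2.3}, gives Schur-convexity of $\Psi_1^{*}$; Theorem~$A.8$ of \cite{Marshall2011} then converts ``increasing $+$ Schur-convex'' into $\boldsymbol{\lambda}\succeq_{w}\boldsymbol{\mu}\Rightarrow\Psi_1^{*}(\boldsymbol{\lambda})\ge\Psi_1^{*}(\boldsymbol{\mu})$, i.e. $X_{2:n}\ge_{st}Y_{2:n}$.

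The step I expect to require the most care is the index comparison underlying Schur-convexity: with the scale parameters equalised, $t_i$ and $t_j$ enter $\varrho_{1i}(\boldsymbol{\lambda})$ and $\varrho_{1j}(\boldsymbol{\lambda})$ symmetrically, so one must confirm that $\lambda_i\le\lambda_j$ (equivalently $t_i\ge t_j$) forces the partial derivatives of $\Psi_1^{*}$ to be monotone in the index in the direction required by Lemma~\ref{lem2.2}. This amounts to combining the factorwise monotonicities listed above with the fact that $\psi'$ is increasing, and is carried out exactly as in the corresponding passage of Theorem~\ref{th3.8}; apart from this bookkeeping and the simplification of the $wr_b(w)$ term noted above, the argument is a direct repetition, which is why the proof may reasonably be omitted in the text.
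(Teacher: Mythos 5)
Your proposal is correct and follows exactly the route the paper intends (the proof is omitted there as "analogous to Theorem \ref{th3.8}"): you reproduce the factorization of $\partial\Psi_1/\partial\lambda_i$ and rightly identify that with a common scalar $\theta$ the factor $[wr_b(w)]_{w=(x-\lambda_i)/\theta}/(x-\lambda_i)$ collapses to $\theta^{-1}r_b((x-\lambda_i)/\theta)$, which is precisely why the hypothesis weakens from "$wr_b(w)$ decreasing" to "$r_b(w)$ decreasing". No gaps.
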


In order to illustrate Theorem \ref{th3.8*}, we consider the following example. It is noted that there are lot of distributions with decreasing hazard rate function.
\begin{example}
	Suppose $\{X_{1},X_{2},X_{3}\}$ and
	$\{Y_{1},Y_{2},Y_{3}\}$ are two sets of dependent random variables such that $X_{i}\sim	 F_{b}^{\alpha}(\frac{x-\lambda_{i}}{\theta})$ and $Y_{i}\sim
	F_{b}^{\alpha}(\frac{x-\mu_i}{\theta})$, for $i=1,2,3,$ with a common generator $\psi.$ Let $\psi(x)=e^{(1-e^x)/a}, ~x>0,~0<a\leq1.$ Here, $\psi$ can be easily shown to be log-concave. Consider the baseline distribution function as exponentiated Weibull distribution, where $F_{b}(x)=(1-\exp(-x^d))^c, ~x,~d,~c>0$. Note that for $d\leq1$ and $dc\leq1$, the hazard rate function of the baseline distribution is decreasing. Set $\boldsymbol{\lambda}=(4,6,7)$,
	$\boldsymbol{\mu}=(2.5,3.5,4)$,
	$\theta=5,~a=0.5,~ \alpha=0.1,~d=0.5$ and $c=0.2$. Therefore, clearly, $\boldsymbol{\lambda}\succeq_{w}\boldsymbol{\mu}$ and $\boldsymbol{\lambda},~\boldsymbol{\mu}\in\mathcal{I}_{+}$.
	Thus, as an application of Theorem \ref{th3.8*},
	we have $X_{2:3}\ge_{st}Y_{2:3}.$
\end{example}

Below, in the consecutive theorems, we consider different sets of scale parameters. It is shown that under some conditions, the reciprocally majorization order or weak super majorization order between the reciprocal of the scale parameter vectors implies the usual stochastic order between the second-order statistics.
\begin{theorem}\label{th3.9}
	Assume that  $\boldsymbol{X}\sim \mathbb{ELS}(
	\boldsymbol{\lambda},\boldsymbol{\theta},\boldsymbol{\alpha};F_{b},\psi)$ and $\boldsymbol{Y}\sim \mathbb{ELS}(
	\boldsymbol{\mu
	},\boldsymbol{\delta},\boldsymbol{\beta};F_{b},\psi),$ with $\boldsymbol{\lambda}=\boldsymbol{\mu}$ and $\boldsymbol{\alpha}=\boldsymbol{\beta}=\alpha \bm{1}_{n}\leq1$. Also, let $\boldsymbol{\lambda},~ \boldsymbol{\theta},~\boldsymbol{\delta}\in\mathcal{I}_+~(or~\mathcal{D}_+)$ and $w^2 r_{b}(w)$ be decreasing in $w>0$. Then, ${\frac{1}{\boldsymbol\theta}}\succeq^{rm}{\frac{1}{\boldsymbol\delta}}\Rightarrow X_{2:n}\geq_{st}Y_{2:n}$, provided $\psi$ is log-concave.
\end{theorem}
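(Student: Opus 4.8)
The plan is to imitate the proof of Theorem~\ref{th3.3}, replacing the products of baseline survival terms by the sums-of-generator expressions dictated by the Archimedean structure, and to use Lemma~\ref{lem2.3} to convert the reciprocal-majorization hypothesis into a Schur-convexity-plus-monotonicity statement. I would treat the case $\boldsymbol{\lambda},\boldsymbol{\theta},\boldsymbol{\delta}\in\mathcal{I}_+$ in detail, the $\mathcal{D}_+$ case being entirely analogous with Lemma~\ref{lem2.1} playing the role of Lemma~\ref{lem2.2}. Write $w_i=(x-\lambda_i)/\theta_i$, $u_i=F_b(w_i)$ and $s_i=\phi\!\left(1-u_i^{\alpha}\right)$; then, for $x>\max_k\lambda_k$, formula \eqref{eq3.27} with $\alpha_k=\alpha$ reads
$$
\zeta_3\!\left(\tfrac{1}{\boldsymbol{\theta}}\right):=\bar F_{X_{2:n}}(x)=\sum_{l=1}^{n}\psi\!\left(\sum_{k\neq l}s_k\right)-(n-1)\,\psi\!\left(\sum_{k=1}^{n}s_k\right),
$$
and $\bar F_{Y_{2:n}}(x)$ is the same expression with $\boldsymbol{\theta}$ replaced by $\boldsymbol{\delta}$ (recall $\boldsymbol{\mu}=\boldsymbol{\lambda}$, $\boldsymbol{\beta}=\boldsymbol{\alpha}=\alpha\bm{1}_n$). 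By Lemma~\ref{lem2.3} (here $a_i=\theta_i$), it is enough to show that $\zeta_3(1/\boldsymbol{\theta})$, regarded as a function of $\boldsymbol{\theta}$, is increasing in each $\theta_i$ and Schur-convex on $\mathcal{I}_+$.

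Differentiating in $\theta_i$ and using $\phi'(1-u_i^{\alpha})=1/\psi'(s_i)$ together with $f_b=r_b\bar F_b$ and $(x-\lambda_i)/\theta_i^2=w_i^2/(x-\lambda_i)$, one obtains the factorization
$$
\frac{\partial \zeta_3(1/\boldsymbol{\theta})}{\partial\theta_i}=\varrho_{3i}\!\left(\tfrac{1}{\boldsymbol{\theta}}\right)\,\varrho_3\!\left(\tfrac{1}{\theta_i}\right),
$$
where $\varrho_{3i}(1/\boldsymbol{\theta})=(n-1)\psi'\!\big(\sum_{k}s_k\big)-\sum_{l\neq i}\psi'\!\big(\sum_{k\neq l}s_k\big)$ and $\varrho_3(1/\theta_i)=-\dfrac{\psi(s_i)}{\psi'(s_i)}\Big[\dfrac{\alpha t^{\alpha-1}(1-t)}{1-t^{\alpha}}\Big]_{t=u_i}\dfrac{[w^2 r_b(w)]_{w=w_i}}{x-\lambda_i}$. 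As in the proof of Theorem~\ref{th3.8}, I would first note that both factors are nonnegative: $\varrho_{3i}\ge0$ because $\psi$ is convex, so $\psi'$ is increasing, whence $\psi'(\sum_{k\neq l}s_k)\le\psi'(\sum_k s_k)$ and summing the $n-1$ terms $l\neq i$ yields the claim; and $\varrho_3(1/\theta_i)\ge0$ because $\psi>0>\psi'$, $0<u_i<1$ and $w^2r_b(w)>0$. This already establishes that $\zeta_3(1/\boldsymbol{\theta})$ is increasing in each $\theta_i$, i.e.\ condition (ii) of Lemma~\ref{lem2.3}.

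For the Schur-convexity on $\mathcal{I}_+$, by Lemma~\ref{lem2.2} it suffices that $\partial\zeta_3(1/\boldsymbol{\theta})/\partial\theta_i$ is increasing in the index $i$; since both factors are nonnegative, it is enough that neither $\varrho_{3i}(1/\boldsymbol{\theta})$ nor $\varrho_3(1/\theta_i)$ decreases on passing from $i$ to a larger index $j$. For $1\le i\le j\le n$ we have $\theta_i\le\theta_j$ and $\lambda_i\le\lambda_j$, hence $w_i\ge w_j$, $u_i\ge u_j$, and therefore $s_i\ge s_j$; consequently $\varrho_{3i}-\varrho_{3j}=\psi'\!\big(\sum_k s_k-s_i\big)-\psi'\!\big(\sum_k s_k-s_j\big)\le0$ since $\psi'$ is increasing. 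For $\varrho_3$, I would treat its three nonnegative factors in turn: $-\psi(s)/\psi'(s)$ is decreasing in $s$ because $\psi$ is log-concave, and $s_i\ge s_j$; $[\alpha t^{\alpha-1}(1-t)/(1-t^{\alpha})]_{t=u}=\omega(\alpha,u)/u$ is decreasing in $u$ for $\alpha\le1$ by Lemma~\ref{lem2.4}, and $u_i\ge u_j$; and $[w^2r_b(w)]_{w=w_i}/(x-\lambda_i)$ is no larger at index $i$ than at index $j$, since $w^2r_b(w)$ is decreasing with $w_i\ge w_j$ while $1/(x-\lambda_i)\le1/(x-\lambda_j)$. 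Multiplying, $\varrho_3(1/\theta_i)\le\varrho_3(1/\theta_j)$, so $\partial\zeta_3(1/\boldsymbol{\theta})/\partial\theta_i\le\partial\zeta_3(1/\boldsymbol{\theta})/\partial\theta_j$, which gives Schur-convexity. Lemma~\ref{lem2.3} then yields $\bar F_{X_{2:n}}(x)\ge\bar F_{Y_{2:n}}(x)$ for all $x$, that is $X_{2:n}\ge_{st}Y_{2:n}$.

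The step I expect to be the main obstacle is establishing the monotonicity of $\varrho_3(1/\theta_i)$: one must check that the generator term $-\psi(s_i)/\psi'(s_i)$ and the baseline term $[w^2r_b(w)]_{w=w_i}$ move in the same direction under a variation of $\theta_i$, which is exactly where the log-concavity of $\psi$ (making $-\psi/\psi'$ monotone) and the hypothesis that $w^2r_b(w)$ be decreasing are both used, together with Lemma~\ref{lem2.4} (case $\alpha\le1$) for the $\omega$-factor. The remaining ingredients — the sign of $\varrho_{3i}$ and its index-monotonicity, obtained from $\psi$ being decreasing and convex, and the bookkeeping of which of $w_i,u_i,s_i$ increase or decrease along $\mathcal{I}_+$ — are routine.
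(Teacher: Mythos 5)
Your proposal is correct and follows essentially the same route as the paper: the same factorization of $\partial\bar F_{X_{2:n}}/\partial\theta_i$ into a copula factor and a baseline factor, the same appeal to Lemma \ref{lem2.3} (increasingness in each $\theta_i$ plus Schur-convexity on $\mathcal{I}_+$ via Lemma \ref{lem2.2}), and the same Theorem \ref{th3.8}-style bookkeeping using the convexity and log-concavity of $\psi$, Lemma \ref{lem2.4} with $\alpha\le 1$, and the decreasingness of $w^2r_b(w)$. In fact you supply the index-monotonicity details that the paper omits by reference to Theorem \ref{th3.8}, and your single factor of $\alpha$ in $\varrho_3$ corrects a harmless duplication in the paper's displayed expression.
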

\begin{proof}
	Let us denote
	$\Psi_2\left({\frac{1}{\boldsymbol\theta}}\right)=\bar{F}_{X_{2:n}}(x)$ and
	 ${t_i=\phi[1-[F_{b}(\frac{x-\lambda_i}{\theta_i})]^{\alpha}]},$ for $i=1,\cdots,n.$ Here, the proof is presented for $\boldsymbol{\lambda},~\bm{\theta}\in \mathcal{I}_+.$ The proof for other case is analogous.  The partial derivative of $\Psi_2\left({\frac{1}{\boldsymbol\theta}}\right)$ with respect to $\theta_i$, for $i=1,\cdots,n$, is obtained as
	 \begin{equation}\label{eqd6}
	 \frac{\partial\Psi_2\left({\frac{1}{\boldsymbol\theta}}\right)}{\partial\theta_i}=
	 \varrho_{2i}\left(\frac{1}{\bm{\theta}}\right)\varrho_2\left(\frac{1}{\theta_i}\right),
	 \end{equation}
	 where
	 \begin{eqnarray}\label{eqd7}
	 \varrho_{2i}\left(\frac{1}{\bm{\theta}}\right)&=&(n-1)\psi'\left(\sum\limits_{k=1}^{n}t_k\right)-\sum\limits_{l\notin\{i,j\}}^{n}\psi'\left(\sum\limits_{k\neq l}^{n}t_k\right)
	 -\psi'\left(t_i+\sum\limits_{k\notin\{i,j\}}^{n}\left\{t_k\right\}\right),~\mbox{and}\\
	 \varrho_2\left(\frac{1}{\theta_i}\right)&=&-\left(\frac{\alpha}{x-\lambda_i}\right)[w^2{{r}_b}(w)]_{w=(\frac{x-\lambda_i}{\theta_i})}\left[\frac{{\psi}({t_i})}{{\psi}'\left(t_i\right) }\right]\left[\frac{\alpha t^{\alpha-1}(1-t)}{1-t^{\alpha}}\right]_{t=F_{b}\left(\frac{x-\lambda_i}{\theta_i}\right)}.
	 \end{eqnarray}
	 To establish the stated result, we need to show that $\Psi_2\left({\frac{1}{\boldsymbol\theta}}\right)$ is increasing and  Schur-convex with respect to $\boldsymbol{\theta}\in \mathcal{I}_+$. This follows similarly to the proof of Theorem \ref{th3.8}. Thus, it is omitted.
\end{proof}

\begin{theorem}\label{th3.10}
	Suppose  $\boldsymbol{X}\sim \mathbb{ELS}(
	\boldsymbol{\lambda},\boldsymbol{\theta},\boldsymbol{\alpha};F_{b},\psi)$ and $\boldsymbol{Y}\sim \mathbb{ELS}(
	\boldsymbol{\mu},\boldsymbol{\delta},\boldsymbol{\beta};F_{b},\psi),$ with $\boldsymbol{\lambda}=\boldsymbol{\mu}$ and $\boldsymbol{\alpha}=\boldsymbol{\beta}=\alpha \bm{1}_{n}\leq1$. Further, let $\boldsymbol{\lambda},~ \boldsymbol{\theta},~\boldsymbol{\delta}\in\mathcal{I}_+~(or~\mathcal{D}_+)$ and $w r_{b}(w)$ be decreasing in $w>0$. Then, ${\frac{1}{\boldsymbol\theta}}\succeq^{w}{\frac{1}{\boldsymbol\delta}}\Rightarrow X_{2:n}\geq_{st}Y_{2:n}$, provided $\psi$ is log-concave.
\end{theorem}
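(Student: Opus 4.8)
The plan is to reproduce the scheme of the proof of Theorem~\ref{th3.8}, but after passing to the reciprocals of the scale parameters so that the hypothesis $1/\boldsymbol\theta\succeq^{w}1/\boldsymbol\delta$ can be used exactly as in Theorem~\ref{th3.2}. I would present the proof for $\boldsymbol\lambda,\boldsymbol\theta,\boldsymbol\delta\in\mathcal{I}_{+}$, the case $\mathcal{D}_{+}$ being symmetric. Put $v_{i}=1/\theta_{i}$, so $\boldsymbol v=(v_{1},\dots,v_{n})\in\mathcal{D}_{+}$, and rewrite the survival function in \eqref{eq3.27}, with every $\alpha_{k}$ replaced by $\alpha$, as $\Psi(\boldsymbol v)=\bar{F}_{X_{2:n}}(x)$, where the argument of $F_{b}$ is now $(x-\lambda_{k})v_{k}$. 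Since $\boldsymbol\lambda=\boldsymbol\mu$ and $\boldsymbol\beta=\boldsymbol\alpha$, one has $\bar{G}_{Y_{2:n}}(x)=\Psi(1/\boldsymbol\delta)$ with the same functional form; hence, by the same Marshall--Olkin argument used in Theorem~\ref{th3.2} (Theorem~A.8 of \cite{Marshall2011}), it suffices to show that $\Psi$ is decreasing and Schur-convex on $\mathcal{D}_{+}$. Then $1/\boldsymbol\theta\succeq^{w}1/\boldsymbol\delta$ yields $\Psi(1/\boldsymbol\delta)\le\Psi(1/\boldsymbol\theta)$, i.e.\ $X_{2:n}\ge_{st}Y_{2:n}$.

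Next I would factor the partial derivative of $\Psi$. Writing $t_{k}=\phi\{1-[F_{b}((x-\lambda_{k})v_{k})]^{\alpha}\}$ and using $\phi'=1/(\psi'\circ\phi)$, a chain-rule computation gives $\partial\Psi/\partial v_{i}=-\varrho_{2i}(\boldsymbol v)\,\varrho_{2}(v_{i})$, where $\varrho_{2i}(\boldsymbol v)=(n-1)\psi'(\sum_{k}t_{k})-\sum_{l\neq i}\psi'(\sum_{k\neq l}t_{k})$ and $\varrho_{2}(v_{i})=-[\psi(t_{i})/\psi'(t_{i})]\,[\alpha t^{\alpha-1}(1-t)/(1-t^{\alpha})]_{t=F_{b}((x-\lambda_{i})v_{i})}\,[w r_{b}(w)]_{w=(x-\lambda_{i})v_{i}}/v_{i}$; this is exactly the $\varrho_{1}$ of Theorem~\ref{th3.8} with the factor $1/(x-\lambda_{i})$ replaced by $1/v_{i}$. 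Both factors are nonnegative: $\varrho_{2i}(\boldsymbol v)\ge 0$ because $\psi$ is decreasing and convex, so $\psi'\le 0$ is increasing while $\sum_{k}t_{k}\ge\sum_{k\neq l}t_{k}>0$; and $\varrho_{2}(v_{i})\ge 0$ because $\psi>0>\psi'$ makes $-\psi/\psi'\ge 0$ and the remaining two factors are visibly positive. Hence $\partial\Psi/\partial v_{i}\le 0$, so $\Psi$ is decreasing on $\mathcal{D}_{+}$.

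For Schur-convexity, by Lemma~\ref{lem2.1} it is enough to show that $\partial\Psi/\partial v_{k}$ is decreasing in $k$, equivalently that $\varrho_{2k}(\boldsymbol v)\varrho_{2}(v_{k})$ is increasing in $k$. Fix $i<j$; in $\mathcal{D}_{+}$ this gives $v_{i}\ge v_{j}$, and since $\boldsymbol\lambda\in\mathcal{I}_{+}$ we get $w_{i}:=(x-\lambda_{i})v_{i}\ge(x-\lambda_{j})v_{j}=:w_{j}$, hence $F_{b}(w_{i})\ge F_{b}(w_{j})$ and $t_{i}\ge t_{j}$. Then $\varrho_{2i}(\boldsymbol v)-\varrho_{2j}(\boldsymbol v)=\psi'(\sum_{k}t_{k}-t_{i})-\psi'(\sum_{k}t_{k}-t_{j})\le 0$ because $\psi'$ is increasing, so $\varrho_{2i}(\boldsymbol v)\le\varrho_{2j}(\boldsymbol v)$; and $\varrho_{2}(v_{i})\le\varrho_{2}(v_{j})$ follows factor by factor, since $-\psi(t)/\psi'(t)$ is decreasing in $t$ (as $\psi$ is log-concave, $\psi'/\psi$ is decreasing), $[\alpha t^{\alpha-1}(1-t)/(1-t^{\alpha})]_{t=F_{b}(w)}=\omega(\alpha,F_{b}(w))/F_{b}(w)$ is decreasing in $w$ for $\alpha\le 1$ by Lemma~\ref{lem2.4}, $[w r_{b}(w)]$ is decreasing by hypothesis, and $1/v_{i}\le 1/v_{j}$. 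Multiplying these nonnegative, monotone factors shows $\varrho_{2k}(\boldsymbol v)\varrho_{2}(v_{k})$ is increasing in $k$, so $\Psi$ is Schur-convex on $\mathcal{D}_{+}$ and the proof concludes as above.

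The differentiation is routine; the point that needs care is the bookkeeping of the monotonicity directions once the problem is reparametrized through $\boldsymbol v=1/\boldsymbol\theta\in\mathcal{D}_{+}$. One must check that $w_{i}=(x-\lambda_{i})v_{i}$ is \emph{decreasing} in the index when $\boldsymbol\lambda\in\mathcal{I}_{+}$ and $\boldsymbol v\in\mathcal{D}_{+}$, so that $t_{i}\ge t_{j}$ and $F_{b}(w_{i})\ge F_{b}(w_{j})$ for $i<j$, and that the log-concavity of $\psi$ is precisely what makes $-\psi/\psi'$ decreasing; everything else reuses Lemma~\ref{lem2.4} and the template of Theorems~\ref{th3.1}, \ref{th3.2} and \ref{th3.8}.
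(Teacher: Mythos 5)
Your proposal is correct and follows essentially the same route as the paper: the paper's own proof reparametrizes via $v_i=1/\theta_i$, factors $\partial\Psi_3/\partial v_i$ into the same two terms, and then defers the verification that $\Psi_3$ is decreasing and Schur-convex on $\mathcal{D}_+$ to the argument of Theorem \ref{th3.8}. You have simply filled in the details the paper omits (including the $1/v_i$ factor from the chain rule and the correct monotonicity bookkeeping after passing to $\boldsymbol v\in\mathcal{D}_+$), and your verification is sound.
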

\begin{proof}
	To prove the stated result, first we define
	\begin{eqnarray}\label{eq3.35}
	\Psi_3\left({{\boldsymbol v}}\right)&=&\bar{F}_{X_{2:n}}(x)\nonumber\\
	&=&\sum\limits_{l=1}^{n}\psi\left[\sum\limits_{k\neq l}^{n}\phi\left\{1-\left[F_{b}\left(({x-\lambda_k}){v_k}\right)\right]^{\alpha}\right\}\right]-(n-1)\psi\left[\sum\limits_{k=1}^{n}\phi\left\{1-\left[F_{b}\left({(x-\lambda_k)}{v_k}\right)\right]^{\alpha}\right\}\right],\nonumber\\
	\end{eqnarray}
	where $v_i=1/\theta_i$, for $i=1,\cdots,n.$ Denote $t_i=\phi\left[1-\left[F_{b}\left(({x-\lambda_i}){v_i}\right)\right]^{\alpha}\right]$ for simplicity of the presentation. On differentiating (\ref{eq3.35}) partially with respect to $v_i$, for $i=1,\cdots,n$, we obtain
	\begin{equation}\label{eq14}
	\frac{\partial\Psi_3\left({\boldsymbol v}\right)}{\partial v_i}=\varrho_{3i}(\bm{v})\varrho_3(v_i),
	\end{equation}
	where
	\begin{eqnarray}	 \varrho_{3i}(\bm{v})&=&(n-1)\psi'\left(\sum\limits_{k=1}^{n}t_k\right)-\sum\limits_{l\notin\{i,j\}}^{n}\psi'\left(\sum\limits_{k\neq l}^{n}t_k\right)
	-\psi'\left(t_i+\sum\limits_{k\notin\{i,j\}}^{n}\left\{t_k\right\}\right)~\mbox{and}
	\\
	 \varrho_3(v_i)&=&[w{r_b}(w)]_{w=(\left({(x-\lambda_i)}{v_i}\right))}\left[\frac{{\psi}({t_i})}{{\psi}'\left(t_i\right) }\right]\left[\frac{\alpha t^{\alpha-1}(1-t)}{1-t^{\alpha}}\right]_{t=F_{b}\left({(x-\lambda_i)}{v_i}\right)}.
	\end{eqnarray}
	The required result readily follows, if we show that $\Psi_3(\boldsymbol{v})$ is decreasing and  Schur-convex with respect to $\boldsymbol{v}\in \mathcal{D}_+~(or~\mathcal{I}_+).$ We omit the remaining steps of the proof, since these are similar to that of Theorem \ref{th3.8}.
\end{proof}

In the next theorem, we show that under the restriction $\boldsymbol{\lambda}=\boldsymbol{\mu}=\lambda\bm{1}_{n}$, the usual stochastic order between $X_{2:n}$ and $Y_{2:n}$ exists when the reciprocal of the scale parameter vectors are connected with the weak super majorization order. The proof is similar to that of Theorem \ref{th3.10}. Thus, we only present the statement of the result.
\begin{theorem}\label{th3.14}
	Suppose  $\boldsymbol{X}\sim \mathbb{ELS}(
	\boldsymbol{\lambda},\boldsymbol{\theta},\boldsymbol{\alpha};F_{b},\psi)$ and $\boldsymbol{Y}\sim \mathbb{ELS}(
	\boldsymbol{\mu},\boldsymbol{\delta},\boldsymbol{\beta};F_{b},\psi),$ with $\boldsymbol{\lambda}=\boldsymbol{\mu}=\lambda\bm{1}_{n}$ and $\boldsymbol{\alpha}=\boldsymbol{\beta}=\alpha\bm{1}_{n}\leq1$. Let $ \boldsymbol{\theta},~\boldsymbol{\delta}\in\mathcal{I}_{+}~(or~\mathcal{D}_{+})$ and $ r_{b}(w)$ be decreasing in $w$. Then, ${\frac{1}{\boldsymbol\theta}}\succeq^{w}{\frac{1}{\boldsymbol\delta}}\Rightarrow X_{2:n}\geq_{st}Y_{2:n}$, provided $\psi$ is log-concave.
\end{theorem}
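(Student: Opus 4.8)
The plan is to follow the scheme used for Theorem \ref{th3.10}, exploiting the fact that here the common location parameter is a scalar $\lambda$; this is precisely what will let the hypothesis ``$wr_{b}(w)$ decreasing'' be relaxed to ``$r_{b}(w)$ decreasing''. I present the case $\boldsymbol{\theta},\boldsymbol{\delta}\in\mathcal{I}_{+}$ (so $1/\boldsymbol{\theta},1/\boldsymbol{\delta}\in\mathcal{D}_{+}$); the case $\mathcal{D}_{+}$ is analogous, with Lemma \ref{lem2.1} replaced by Lemma \ref{lem2.2} and the monotonicities reversed. First I would fix $x>\lambda$, put $c=x-\lambda>0$, $v_{i}=1/\theta_{i}$, $\boldsymbol{v}=(v_{1},\dots,v_{n})\in\mathcal{D}_{+}$, and define
\begin{equation}\label{eqt14a}
\Psi(\boldsymbol{v})=\bar{F}_{X_{2:n}}(x)=\sum_{l=1}^{n}\psi\!\left(\sum_{k\neq l}^{n}t_{k}\right)-(n-1)\,\psi\!\left(\sum_{k=1}^{n}t_{k}\right),\qquad t_{k}=\phi\!\left(1-[F_{b}(cv_{k})]^{\alpha}\right),
\end{equation}
which is the specialization of \eqref{eq3.27} to the present parameters, so that $\psi(t_{k})=1-[F_{b}(cv_{k})]^{\alpha}$. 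Since $\boldsymbol{Y}$ has the same baseline, shape $\alpha$, scalar location $\lambda$ and generator $\psi$, one also has $\bar{F}_{Y_{2:n}}(x)=\Psi(1/\boldsymbol{\delta})$. As $1/\boldsymbol{\theta}\succeq^{w}1/\boldsymbol{\delta}$, by Theorem $A.8$ of \cite{Marshall2011} it then suffices to show that $\Psi$ is decreasing in each $v_{i}$ and Schur-convex on $\mathcal{D}_{+}$; this gives $\Psi(1/\boldsymbol{\theta})\geq\Psi(1/\boldsymbol{\delta})$ for every $x>\lambda$, and since both survival functions equal $1$ for $x\leq\lambda$, we conclude $X_{2:n}\geq_{st}Y_{2:n}$.

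Next I would differentiate \eqref{eqt14a}. As only $t_{i}$ depends on $v_{i}$,
\begin{equation}\label{eqt14b}
\frac{\partial\Psi(\boldsymbol{v})}{\partial v_{i}}=\frac{\partial t_{i}}{\partial v_{i}}\,Q_{i}(\boldsymbol{v}),\qquad Q_{i}(\boldsymbol{v})=\sum_{l\neq i}^{n}\psi'\!\left(\sum_{k\neq l}^{n}t_{k}\right)-(n-1)\,\psi'\!\left(\sum_{k=1}^{n}t_{k}\right),
\end{equation}
while, using $\phi=\psi^{-1}$ (hence $\phi'(\psi(t))=1/\psi'(t)$) and $f_{b}=r_{b}\bar{F}_{b}$,
\begin{equation}\label{eqt14c}
\frac{\partial t_{i}}{\partial v_{i}}=c\,r_{b}(cv_{i})\left[\frac{\psi(t_{i})}{-\psi'(t_{i})}\right]\left[\frac{\alpha\,[F_{b}(cv_{i})]^{\alpha-1}\bar{F}_{b}(cv_{i})}{1-[F_{b}(cv_{i})]^{\alpha}}\right]>0 .
\end{equation}
The decisive difference from Theorem \ref{th3.10} is that the prefactor $c=x-\lambda$ in \eqref{eqt14c} is constant, so $r_{b}$ rather than $wr_{b}$ enters. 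I would then observe that each factor on the right of \eqref{eqt14c} is positive and decreasing in $v_{i}$: the factor $r_{b}(cv_{i})$ by hypothesis; the factor $\psi(t_{i})/(-\psi'(t_{i}))$ because log-concavity of $\psi$ makes $-\psi'/\psi$ increasing, hence $\psi/(-\psi')$ decreasing in $t_{i}$, while $t_{i}=\phi(1-[F_{b}(cv_{i})]^{\alpha})$ is increasing in $v_{i}$ ($\phi$ being decreasing); and the bracket $[\alpha t^{\alpha-1}(1-t)/(1-t^{\alpha})]_{t=F_{b}(cv_{i})}$, which is decreasing in $F_{b}(cv_{i})$ and hence in $v_{i}$, by Lemma \ref{lem2.4} together with $\alpha\leq1$ (exactly as in the proof of Theorem \ref{th3.8}). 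Thus $\partial t_{i}/\partial v_{i}$ is positive and decreasing in $v_{i}$. For $Q_{i}$ in \eqref{eqt14b}: convexity of $\psi$ makes $\psi'$ increasing, and $\sum_{k\neq l}t_{k}=\sum_{k}t_{k}-t_{l}\leq\sum_{k}t_{k}$, so each of the $n-1$ terms $\psi'(\sum_{k}t_{k}-t_{l})-\psi'(\sum_{k}t_{k})$ is $\leq0$; hence $Q_{i}\leq0$ and $\partial\Psi/\partial v_{i}\leq0$, so $\Psi$ is decreasing.

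For Schur-convexity I would invoke Lemma \ref{lem2.1} and check that $\partial\Psi/\partial v_{k}$ is decreasing in $k$. Writing $S=\sum_{k}t_{k}$, one has $|Q_{i}|=\sum_{l\neq i}\big(\psi'(S)-\psi'(S-t_{l})\big)$, so $|Q_{j}|-|Q_{i}|=\psi'(S-t_{j})-\psi'(S-t_{i})$. For $1\leq i<j\leq n$ on $\mathcal{D}_{+}$ we have $v_{i}\geq v_{j}$, whence $F_{b}(cv_{i})\geq F_{b}(cv_{j})$ and, since $\phi$ is decreasing, $t_{i}\geq t_{j}\geq0$; therefore $S-t_{i}\leq S-t_{j}$ and, $\psi'$ being increasing, $|Q_{j}|-|Q_{i}|\geq0$. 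Moreover $\partial t_{i}/\partial v_{i}\leq\partial t_{j}/\partial v_{j}$, because $v_{i}\geq v_{j}$ and $\partial t/\partial v$ is decreasing. Multiplying the two nonnegative inequalities $0\leq\partial t_{i}/\partial v_{i}\leq\partial t_{j}/\partial v_{j}$ and $0\leq|Q_{i}|\leq|Q_{j}|$ gives $(\partial t_{i}/\partial v_{i})|Q_{i}|\leq(\partial t_{j}/\partial v_{j})|Q_{j}|$, that is,
\begin{equation}\label{eqt14d}
\frac{\partial\Psi(\boldsymbol{v})}{\partial v_{i}}=-\frac{\partial t_{i}}{\partial v_{i}}\,|Q_{i}|\geq-\frac{\partial t_{j}}{\partial v_{j}}\,|Q_{j}|=\frac{\partial\Psi(\boldsymbol{v})}{\partial v_{j}} .
\end{equation}
Hence $\Psi_{(k)}(\boldsymbol{v})$ is decreasing in $k$, so by Lemma \ref{lem2.1} $\Psi$ is Schur-convex on $\mathcal{D}_{+}$, and together with $\Psi$ decreasing the conclusion follows from the reduction in the first paragraph.

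The hard part will be the monotonicity claim for $\partial t_{i}/\partial v_{i}$ in \eqref{eqt14c}: this is the one place where log-concavity of $\psi$, the restriction $\alpha\leq1$ (through Lemma \ref{lem2.4}), and the monotonicity of $r_{b}$ all have to be combined, and it is precisely there that the scalar-location assumption pays off, lowering the requirement on the baseline from $wr_{b}(w)$ decreasing to $r_{b}(w)$ decreasing. The estimates on $Q_{i}$ and the passage through Lemma \ref{lem2.1} and Theorem $A.8$ of \cite{Marshall2011} are then routine, being essentially the bookkeeping already carried out for Theorems \ref{th3.8} and \ref{th3.10}.
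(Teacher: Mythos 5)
Your proof is correct and follows essentially the same route the paper intends: the paper omits the proof of this theorem, stating only that it is "similar to that of Theorem \ref{th3.10}", and your argument is exactly that adaptation — define $\Psi(\boldsymbol{v})$ from \eqref{eq3.27}, show it is decreasing and Schur-convex in $\boldsymbol{v}=1/\boldsymbol{\theta}$ via the factorization $\partial\Psi/\partial v_i=(\partial t_i/\partial v_i)Q_i$, and invoke Theorem A.8 of \cite{Marshall2011}. You also correctly pinpoint the one genuine difference from Theorem \ref{th3.10}: with a scalar location the prefactor $x-\lambda$ in $\partial t_i/\partial v_i$ is common to all $i$, which is precisely why the hypothesis weakens from $wr_b(w)$ decreasing to $r_b(w)$ decreasing.
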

The numerical example given below,  illustrates Theorem \ref{th3.14}.
\begin{example}\label{ex3.5}
	Let $\{X_{1},X_{2},X_{3}\}$ and
	$\{Y_{1},Y_{2},Y_{3}\}$ be two sets of dependent random variables such that $X_{i}\sim	 F^{\alpha}_b(\frac{x-\lambda}{\theta_i})$ and $Y_{i}\sim
	F^{\alpha}_b(\frac{x-\lambda}{\theta_i})$, for $i=1,2,3.$ Let $\psi(x)=e^{-x},~x>0.$ Here, $\psi$ is log-concave. Take the baseline distribution function as lower-truncated Weibull distribution with $F_{b}(x)=1-\exp(1-x^a),~x\geq1,~a>0$. The hazard rate function $r_b(x)$ of this distribution is decreasing for $a\leq 1.$ Set $\boldsymbol{\theta}=(4.5,6.5,7.5)$,
	$\boldsymbol{\delta}=(2.5,3.5,4)$,
	$\lambda=5$, $a=0.12$ and $\alpha=0.9$. For these values, it can be verified that $1/\boldsymbol{\theta}\succeq^{w}1/\boldsymbol{\delta}$. Further, clearly,  $\boldsymbol{\theta},~\boldsymbol{\delta}\in\mathcal{I}_{+}$.
	Therefore, Theorem \ref{th3.14} provides
	$X_{2:3}\ge_{st}Y_{2:3},$ which can be verified from Figure $1b$.
\end{example}

In the above theorems of this subsection, we have taken a common Archimedean copula to describe the dependence among the random variables. It is then natural to ask if the result holds for the case of different Archimedean copulas. In this part of the paper, we will search the answers of this question. First, we present the following lemmas, which will be useful in this sequel. The first lemma can be found in \cite{li2015ordering}.
\begin{lemma}\label{lem3.1}
	For two $n$-dimensional Archimedean copulas $C_{\psi_1}$ and  $C_{\psi_2}$, if $\phi_2\circ\psi_1$ is super-additive, then $C_{\psi_1}(\boldsymbol{v})\leq C_{\psi_2}(\boldsymbol{v})$, for all $\boldsymbol{v}\in[0,1]^n.$ A function $f$ is said to be super-additive, if $f(x+y)\ge f(x)+f(y),$ for all $x$ and $y$ in the domain of $f.$
\end{lemma}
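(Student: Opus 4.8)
The plan is to reduce the desired inequality to the one-line definition of super-additivity by passing through the generators. First I would write the two copulas explicitly in additive form,
$$C_{\psi_1}(\boldsymbol{v})=\psi_1\!\left(\sum_{i=1}^{n}\phi_1(v_i)\right),\qquad C_{\psi_2}(\boldsymbol{v})=\psi_2\!\left(\sum_{i=1}^{n}\phi_2(v_i)\right),$$
where $\phi_j=\psi_j^{-1}$. Fix $\boldsymbol{v}\in[0,1]^n$ and put $u_i=\phi_1(v_i)\in[0,\infty)$, so that $v_i=\psi_1(u_i)$, and introduce the composition $h:=\phi_2\circ\psi_1$, which is super-additive by hypothesis and satisfies $h(0)=\phi_2(\psi_1(0))=\phi_2(1)=0$. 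In this notation $C_{\psi_1}(\boldsymbol{v})=\psi_1\!\left(\sum_{i=1}^n u_i\right)$ and $C_{\psi_2}(\boldsymbol{v})=\psi_2\!\left(\sum_{i=1}^n h(u_i)\right)$, so everything is reduced to comparing these two expressions.

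Next I would extend super-additivity from two summands to $n$ summands by a trivial induction, $h\!\left(\sum_{i=1}^n u_i\right)\ge h(u_1)+h\!\left(\sum_{i=2}^n u_i\right)\ge\cdots\ge\sum_{i=1}^{n}h(u_i)$. Since $\psi_2$ is non-increasing, applying it to both sides of $\sum_{i=1}^n h(u_i)\le h\!\left(\sum_{i=1}^n u_i\right)$ reverses the inequality, giving
$$C_{\psi_2}(\boldsymbol{v})=\psi_2\!\left(\sum_{i=1}^{n}h(u_i)\right)\ \ge\ \psi_2\!\left(h\!\left(\sum_{i=1}^{n}u_i\right)\right).$$
Finally I would simplify the right-hand side using $\psi_2\circ\phi_2=\mathrm{id}$ together with the definition of $h$, namely $\psi_2\!\left(h\!\left(\sum_i u_i\right)\right)=\psi_2\!\left(\phi_2\!\left(\psi_1\!\left(\sum_i u_i\right)\right)\right)=\psi_1\!\left(\sum_i u_i\right)=C_{\psi_1}(\boldsymbol{v})$, which yields $C_{\psi_1}(\boldsymbol{v})\le C_{\psi_2}(\boldsymbol{v})$, as claimed.

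There is essentially no serious obstacle here; the only points requiring a little care are bookkeeping ones — checking that $u_i=\phi_1(v_i)$ lies in the domain $[0,\infty)$ on which $h$ is super-additive (true since $\phi_1$ maps $[0,1]$ onto $[0,\infty)$), noting $h(0)=0$ so that the inductive step in the extension of super-additivity is clean, and making sure one invokes the monotonicity of the correct generator ($\psi_2$, not $\psi_1$) at the step where the inequality is reversed. The boundary cases $v_i\in\{0,1\}$ are covered by the same computation under the usual conventions $\phi_1(1)=0$, $\phi_1(0)=\infty$, $\psi_1(\infty)=0$.
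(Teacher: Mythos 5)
Your proof is correct. Note that the paper does not actually prove this lemma: it is quoted from \cite{li2015ordering}, and for the companion sub-additive version (Lemma 3.2) the authors merely point to Theorem 4.4.2 of \cite{nelsen2006introduction}. Your argument --- substituting $u_i=\phi_1(v_i)$, writing $C_{\psi_2}(\boldsymbol{v})=\psi_2\bigl(\sum_i h(u_i)\bigr)$ with $h=\phi_2\circ\psi_1$, extending super-additivity to $n$ summands by induction, and then applying the non-increasing $\psi_2$ together with $\psi_2\circ\phi_2=\mathrm{id}$ --- is exactly the standard proof of that cited result, generalized from the bivariate to the $n$-dimensional case, so it supplies the missing details rather than deviating from the intended route. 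The only point worth flagging is that when $\psi_2$ is not strictly decreasing one should say $\phi_2$ is the pseudo-inverse and check that $\psi_2\circ\phi_2=\mathrm{id}$ still holds on $[0,1]$ (it does, in the direction you use it), and your remark on the boundary conventions for $v_i\in\{0,1\}$ already covers the remaining degenerate cases.
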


\begin{lemma}\label{lem3.2}
	 For two $n$-dimensional Archimedean copulas $C_{\psi_1}$ and  $C_{\psi_2}$, if $\phi_2\circ\psi_1$ is sub-additive, then $C_{\psi_1}(\boldsymbol{v})\geq C_{\psi_2}(\boldsymbol{v})$, for all $\boldsymbol{v}\in[0,1]^n.$ A function $f$ is said to be sub-additive, if $f(x+y)\le f(x)+f(y),$ for all $x$ and $y$ in the domain of $f.$
\end{lemma}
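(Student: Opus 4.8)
The plan is to mirror the proof of Lemma \ref{lem3.1}, simply reversing every inequality. Writing the Archimedean copulas in generator form, $C_{\psi_1}(\boldsymbol{v})=\psi_1\!\left(\sum_{i=1}^{n}\phi_1(v_i)\right)$ and $C_{\psi_2}(\boldsymbol{v})=\psi_2\!\left(\sum_{i=1}^{n}\phi_2(v_i)\right)$, I would introduce the substitution $x_i=\phi_1(v_i)\in[0,\infty)$, so that $v_i=\psi_1(x_i)$, and set $g=\phi_2\circ\psi_1:[0,\infty)\to[0,\infty)$. The algebraic identity that drives everything is $\psi_2\circ g=\psi_2\circ\phi_2\circ\psi_1=\psi_1$, since $\psi_2=\phi_2^{-1}$. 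This lets me express both sides through the single generator $\psi_2$: namely $C_{\psi_1}(\boldsymbol{v})=\psi_1\!\left(\sum_{i=1}^{n}x_i\right)=\psi_2\!\left(g\!\left(\sum_{i=1}^{n}x_i\right)\right)$, whereas $C_{\psi_2}(\boldsymbol{v})=\psi_2\!\left(\sum_{i=1}^{n}\phi_2(\psi_1(x_i))\right)=\psi_2\!\left(\sum_{i=1}^{n}g(x_i)\right)$.

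From this point the conclusion is immediate. Iterating the sub-additivity hypothesis on $g$ yields $g\!\left(\sum_{i=1}^{n}x_i\right)\le\sum_{i=1}^{n}g(x_i)$, and since $\psi_2$ is nonincreasing, applying $\psi_2$ reverses this inequality, giving $C_{\psi_1}(\boldsymbol{v})=\psi_2\!\left(g\!\left(\sum_{i=1}^{n}x_i\right)\right)\ge\psi_2\!\left(\sum_{i=1}^{n}g(x_i)\right)=C_{\psi_2}(\boldsymbol{v})$ for all $\boldsymbol{v}\in[0,1]^n$.

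The only points requiring a line of care are domain and boundary bookkeeping: one checks that $g$ indeed maps $[0,\infty)$ into $[0,\infty)$ so that $\sum_{i=1}^{n}g(x_i)$ lies in the domain of $\psi_2$, which follows from $\psi_1:[0,\infty)\to[0,1]$ together with $\phi_2:[0,1]\to[0,\infty)$; and the degenerate cases (some $v_i=0$, or an infinite argument) are handled by continuity and monotonicity of the generators, exactly as in Lemma \ref{lem3.1}. I do not expect any genuine obstacle here, since the statement is the exact mirror image of Lemma \ref{lem3.1}: exchanging ``super-additive'' for ``sub-additive'' flips the single monotonicity step, and the rest of the argument goes through verbatim.
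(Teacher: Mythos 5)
Your argument is correct and is essentially the same as the paper's, which simply defers to Theorem 4.4.2 of Nelsen (2006); the identity $\psi_2\circ(\phi_2\circ\psi_1)=\psi_1$, the iterated sub-additivity of $g=\phi_2\circ\psi_1$, and the monotonicity of $\psi_2$ are exactly the ingredients of that standard proof, here with the inequality reversed. Your handling of the boundary cases via the pseudo-inverse identities is also the right bookkeeping.
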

\begin{proof}
The proof of the theorem can be done by similar approach as in Theorem 4.4.2 of \cite{nelsen2006introduction}. Therefore, it is omitted.
\end{proof}

In the following theorems, we obtain sufficient conditions, under which the usual stochastic order holds between the second-order statistics. It is assumed that  two collections of dependent observations have different dependent structures.

\begin{theorem}\label{th3.15}
	Let  $\boldsymbol{X}\sim \mathbb{ELS}(
	\boldsymbol{\lambda},\boldsymbol{\theta},\boldsymbol{\alpha};F_{b},\psi_1)$ and $\boldsymbol{Y}\sim \mathbb{ELS}(
	\boldsymbol{\mu},\boldsymbol{\delta},\boldsymbol{\beta};F_{b},\psi_2),$ with $\boldsymbol{\alpha}=\boldsymbol{\beta}=\alpha \bm{1}_n\leq 1$ and $\boldsymbol{\theta}=\boldsymbol{\delta}$. Let $\phi_1=\psi^{-1}_1$, $\phi_2=\psi^{-1}_2$ and  $\psi_1$ or $\psi_2$ is log-concave.
Also, assume $\boldsymbol{\lambda},~ \boldsymbol{\theta},~\boldsymbol{\mu}\in\mathcal{I}_+~(or~\mathcal{D_+})$ and $w r_{b}(w)$  is decreasing in $w$. Then,
 \begin{itemize}
 \item[(i)] $\boldsymbol{\lambda}\succeq_{w}\boldsymbol{\mu}\Rightarrow X_{2:n}\geq_{st}Y_{2:n}$, provided $\phi_2\circ\psi_1$ is sub-additive.
     \item[(ii)] $\boldsymbol{\mu}\succeq_{w}\boldsymbol{\lambda}\Rightarrow X_{2:n}\leq_{st}Y_{2:n}$, provided $\phi_2\circ\psi_1$ is super-additive.
 \end{itemize}
\end{theorem}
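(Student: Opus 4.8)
The plan is to prove (i) and to deduce (ii) by the mirror-image argument; I take the vectors in $\mathcal{I}_+$, the $\mathcal{D}_+$ case being symmetric. The idea is to interpolate between $\boldsymbol{X}$ and $\boldsymbol{Y}$ through an auxiliary vector $\boldsymbol{Z}$ that shares its one-dimensional marginals with one of them and its Archimedean generator with the other, arranged so that the \emph{location} comparison always uses the generator assumed to be log-concave. If $\psi_1$ is log-concave I would take $\boldsymbol{Z}\sim\mathbb{ELS}(\boldsymbol{\mu},\boldsymbol{\theta},\alpha\boldsymbol{1}_n;F_b,\psi_1)$; if $\psi_2$ is log-concave I would take $\boldsymbol{Z}\sim\mathbb{ELS}(\boldsymbol{\lambda},\boldsymbol{\theta},\alpha\boldsymbol{1}_n;F_b,\psi_2)$ (recall $\boldsymbol{\theta}=\boldsymbol{\delta}$). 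In either case one of the two comparisons $X_{2:n}$ vs.\ $Z_{2:n}$ and $Z_{2:n}$ vs.\ $Y_{2:n}$ is a pure location comparison (identical generator; location vectors $\boldsymbol{\lambda},\boldsymbol{\mu}$ with $\boldsymbol{\lambda}\succeq_{w}\boldsymbol{\mu}$) and the other a pure copula comparison (identical marginals; generators $\psi_1,\psi_2$). Chaining the two $\geq_{st}$ relations gives $X_{2:n}\geq_{st}Y_{2:n}$.

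The location comparison is Theorem~\ref{th3.8} applied verbatim: $\boldsymbol{\lambda}\succeq_{w}\boldsymbol{\mu}$, $\boldsymbol{\lambda},\boldsymbol{\theta},\boldsymbol{\mu}\in\mathcal{I}_+$, $wr_b(w)$ decreasing, $\boldsymbol{\alpha}=\boldsymbol{\beta}=\alpha\boldsymbol{1}_n\leq1$, and the common generator log-concave are exactly its hypotheses.

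For the copula comparison, write $h:=\phi_2\circ\psi_1$, so that $\phi_2=h\circ\phi_1$, $\psi_2=\psi_1\circ h^{-1}$, with $h$ increasing, $h(0)=0$, and — by the hypothesis of~(i) — sub-additive, hence $h^{-1}$ super-additive. Let $\bar{u}_k(x)$ denote the shared survival marginals and, for $S\subseteq\{1,\dots,n\}$, set $A_j(S)=\psi_j\big(\sum_{k\in S}\phi_j(\bar{u}_k(x))\big)$, the probability under generator $\psi_j$ that all components indexed by $S$ exceed $x$, so that by~(\ref{eq3.27})
\[
\bar{F}^{(\psi_j)}_{2:n}(x)=\sum_{l=1}^{n}A_j\big(\{1,\dots,n\}\setminus\{l\}\big)-(n-1)\,A_j\big(\{1,\dots,n\}\big).
\]
Lemma~\ref{lem3.2} gives $A_1(S)\geq A_2(S)$ for every $S$, so the leading sum for $\psi_1$ dominates that for $\psi_2$; but the subtracted term points the other way, and what remains is to show $\sum_{l}\big[A_1(S_l)-A_2(S_l)\big]\geq(n-1)\big[A_1(S_0)-A_2(S_0)\big]$, with $S_l=\{1,\dots,n\}\setminus\{l\}$ and $S_0=\{1,\dots,n\}$. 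Writing $a_k=\phi_1(\bar{u}_k(x))$ and using $\psi_2=\psi_1\circ h^{-1}$ together with sub-additivity of $h$ (which yields $h^{-1}\big(\sum_{k\in S}h(a_k)\big)\geq\sum_{k\in S}a_k$), this reduces to an inequality among values of the single function $\psi_1$ at shifted arguments, to be closed using that $\psi_1$ is decreasing, convex and log-concave. I expect this copula step to be the main obstacle: $\bar{F}_{2:n}$ is \emph{not} monotone in the copula because of the $-(n-1)\psi(\cdot)$ term, so the pointwise inequality $C_{\psi_1}\geq C_{\psi_2}$ alone is insufficient and the finer structural properties of the generator must be exploited to trade the loss in the subtracted term against the gain spread over the $n$ summands.

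For part~(ii) the structure is unchanged: $\boldsymbol{\mu}\succeq_{w}\boldsymbol{\lambda}$ reverses the location comparison, and super-additivity of $\phi_2\circ\psi_1$ reverses the copula comparison via Lemma~\ref{lem3.1} ($C_{\psi_1}\leq C_{\psi_2}$, equivalently $h^{-1}$ sub-additive); the same interpolation through $\boldsymbol{Z}$ then yields $X_{2:n}\leq_{st}Y_{2:n}$.
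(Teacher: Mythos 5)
Your decomposition is exactly the one the paper uses: the paper interpolates through the quantity $A(x,\boldsymbol{\mu},\boldsymbol{\theta};F_{b},\psi_1)$ (your $\boldsymbol{Z}$ with locations $\boldsymbol{\mu}$ and generator $\psi_1$), settles the location comparison by Theorem \ref{th3.8}, and settles the copula comparison by citing Lemma \ref{lem3.2}. The location half of your argument is fine. The copula half, which you flag as ``the main obstacle,'' is indeed the crux — and your suspicion is correct in the strongest sense: the inequality \eqref{eqn1}, which the paper asserts as an immediate consequence of Lemma \ref{lem3.2}, does not follow from the pointwise domination $C_{\psi_1}\geq C_{\psi_2}$, and it is false under the stated hypotheses. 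Take $n=3$, $\boldsymbol{\lambda}=\boldsymbol{\mu}$, $\boldsymbol{\theta}=\theta\bm{1}_3$, $\alpha=1$ and a Pareto baseline (so $wr_b(w)$ is constant, hence decreasing in the nonstrict sense); then at each $x$ all three marginal survival probabilities equal a common $u\in(0,1)$. Choose $\psi_1(t)=e^{-\sqrt{t}}$ (Gumbel) and $\psi_2(t)=e^{-t}$ (independence), so that $\phi_2\circ\psi_1(t)=\sqrt{t}$ is sub-additive and $\psi_2$ is log-concave: every hypothesis of part $(i)$ holds ($\boldsymbol{\lambda}=\boldsymbol{\mu}$ trivially satisfies $\boldsymbol{\lambda}\succeq_{w}\boldsymbol{\mu}$). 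The two sides of \eqref{eqn1} are $3u^{\sqrt{2}}-2u^{\sqrt{3}}$ and $3u^{2}-2u^{3}$; at $u=0.9$ these are approximately $0.918$ and $0.972$, so the asserted inequality fails there (it holds at $u=0.1$ — the two survival functions cross). Since the marginals coincide, this is simultaneously a counterexample to the conclusion $X_{2:3}\geq_{st}Y_{2:3}$ of part $(i)$; interchanging the roles of the two generators (so that $\phi_2\circ\psi_1(t)=t^{2}$ is super-additive) defeats part $(ii)$ in the same way.

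So your proposal is incomplete at exactly the point you identified, and it cannot be completed along these lines: no combination of convexity and log-concavity of the generator closes the inequality $\sum_{l}[A_1(S_l)-A_2(S_l)]\geq(n-1)[A_1(S_0)-A_2(S_0)]$, because it genuinely fails for admissible generators. The structural reason is the one you named: $\bar{F}_{X_{2:n}}$ is not a monotone functional of the (survival) copula. Already for $n=2$ the second order statistic is the maximum, whose survival function $\bar{u}_1+\bar{u}_2-\hat{C}(\bar{u}_1,\bar{u}_2)$ is \emph{decreasing} in the copula, and for $n\geq3$ the dependence effect changes sign as the marginal survival level varies, so no pointwise comparison of copulas can deliver a uniform-in-$x$ ordering. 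The paper's own proof does not do more than you did — it simply asserts \eqref{eqn1} from Lemma \ref{lem3.2} — so the gap you were unable to close is a genuine gap in the theorem as stated, not a deficiency of your write-up.
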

\begin{proof} $(i)$ To prove the first part, we denote
\begin{eqnarray}
A(x,\boldsymbol{ \lambda},\boldsymbol{ \theta}; F_{b},\psi_1)&=&\sum\limits_{l=1}^{n}\psi_1\left[\sum\limits_{k\neq l}^{n}\phi_1\left\{1-\left[F_{b}\left(\frac{x-\lambda_k}{\theta_k}\right)\right]^{\alpha}\right\}\right]\nonumber\\&~&-(n-1)\psi_1\left[\sum\limits_{k=1}^{n}\phi_1\left\{1-\left[F_{b}\left(\frac{x-\lambda_k}{\theta_k}\right)\right]^{\alpha}\right\}\right]
\end{eqnarray}
and
\begin{eqnarray}
B(x,\boldsymbol{ \mu},\boldsymbol{ \theta}; F_{b},\psi_2)&=&\sum\limits_{l=1}^{n}\psi_2\left[\sum\limits_{k\neq l}^{n}\phi_2\left\{1-\left[F_{b}\left(\frac{x-\mu_k}{\theta_k}\right)\right]^{\alpha}\right\}\right]\nonumber\\
&~&-(n-1)\psi_2\left[\sum\limits_{k=1}^{n}\phi_2\left\{1-\left[F_{b}\left(\frac{x-\mu_k}{\theta_k}\right)\right]^{\alpha}\right\}\right].
\end{eqnarray}
Applying sub-additive property of $\phi_2\circ\psi_1$ and Lemma \ref{lem3.2}, we obtain
\begin{eqnarray}\label{eqn1}
A(x,\boldsymbol{ \mu},\boldsymbol{ \theta}; F_{b},\psi_1)\geq B(x,\boldsymbol{ \mu},\boldsymbol{ \theta};F_{b},\psi_2).
\end{eqnarray}
Further, using Theorem \ref{th3.8}, we obtain that $\boldsymbol{ \lambda}\succeq_{w}\boldsymbol{\mu}$ implies
\begin{eqnarray}\label{eqn2}
A(x,\boldsymbol{ \lambda},\boldsymbol{ \theta}; F_{b},\psi_1)\geq A(x,\boldsymbol{ \mu},\boldsymbol{ \theta};F_{b},\psi_1).
\end{eqnarray}
Now, combining \eqref{eqn1} and \eqref{eqn2}, we get the required result. This completes the proof of the theorem. \\
\\
$(ii)$
	Denote
	\begin{eqnarray}
	A(x,\boldsymbol{ \lambda},\boldsymbol{ \theta}; F_{b},\psi_1)&=&\sum\limits_{l=1}^{n}\psi_1\left[\sum\limits_{\overset{k=1}{k\neq l}}^{n}\phi_1\left\{1-\left[F_{b}\left(\frac{x-\lambda_k}{\theta_k}\right)\right]^{\alpha}\right\}\right]\nonumber\\
	 &~&-(n-1)\psi_1\left[\sum\limits_{k=1}^{n}\phi_1\left\{1-\left[F_{b}\left(\frac{x-\lambda_k}{\theta_k}\right)\right]^{\alpha}\right\}\right]
	\end{eqnarray}
	and
	\begin{eqnarray}
	B(x,\boldsymbol{ \mu},\boldsymbol{ \theta}; F_{b},\psi_2)&=&\sum\limits_{l=1}^{n}\psi_2\left[\sum\limits_{\overset{k=1}{k\neq l}}^{n}\phi_2\left\{1-\left[F_{b}\left(\frac{x-\mu_k}{\theta_k}\right)\right]^{\alpha}\right\}\right]\nonumber\\&~&-(n-1)\psi_2\left[\sum\limits_{k=1}^{n}\phi_2\left\{1-\left[F_{b}\left(\frac{x-\mu_k}{\theta_k}\right)\right]^{\alpha}\right\}\right].
	\end{eqnarray}
	The super-additive property of $\phi_2\circ\psi_1$ and Lemma \ref{lem3.1} yield
	\begin{eqnarray}\label{eqn3}
	A(x,\boldsymbol{ \lambda},\boldsymbol{ \theta}; F_{b},\psi_1)\leq B(x,\boldsymbol{ \lambda},\boldsymbol{ \theta}; F_{b},\psi_2).
	\end{eqnarray}
	To prove the result, we have to establish
	\begin{eqnarray}\label{eqn4}
	B(x,\boldsymbol{ \mu},\boldsymbol{ \theta};F_{b},\psi_2)\geq B(x,\boldsymbol{ \lambda},\boldsymbol{ \theta};F_{b},\psi_2),
	\end{eqnarray}
	which is equivalent to show that $B(x,\boldsymbol{ \mu},\boldsymbol{ \theta};F_{b},\psi_2)$ is Schur-convex with respect to $\boldsymbol{ \mu}\in\mathcal{I_+}~ (\text{or } \mathcal{D_+}).$ This follows similarly from the proof of Theorem \ref{th3.8}. Hence, it is not presented.
\end{proof}

In the next result, we consider different scale parameters. We only present the statement, since the proof can be completed using similar arguments as in Theorem \ref{th3.15}.
\begin{theorem}\label{th3.16}
	Let  $\boldsymbol{X}\sim \mathbb{ELS}(
	\boldsymbol{\lambda},\boldsymbol{\theta},\boldsymbol{\alpha};F_{b},\psi_1)$ and $\boldsymbol{Y}\sim \mathbb{ELS}(
	\boldsymbol{\mu},\boldsymbol{\delta},\boldsymbol{\beta};F_{b},\psi_2),$ with $\boldsymbol{\alpha}=\boldsymbol{\beta}=\alpha \bm{1}_{n}\leq 1$ and $\boldsymbol{\lambda}=\boldsymbol{\mu}$. Let $\phi_1=\psi^{-1}_1$, $\phi_2=\psi^{-1}_2$ and $\psi_1$ or $\psi_2$ is log-concave. Further, assume $\boldsymbol{\lambda},~ \boldsymbol{\theta},~\boldsymbol{\delta}\in\mathcal{I}_{+}~(or~\mathcal{D}_{+})$ and $w r_{b}(w)$  is decreasing in $w$. Then,
 \begin{itemize}
 \item[(i)] ${\frac{1}{\boldsymbol\theta}}\succeq^{w}{\frac{1}{\boldsymbol\delta}}\Rightarrow X_{2:n}\geq_{st}Y_{2:n}$, provided $\phi_2\circ\psi_1$ is sub-additive.
 \item[(ii)] ${\frac{1}{\boldsymbol\delta}}\succeq^{w}{\frac{1}{\boldsymbol\theta}}\Rightarrow X_{2:n}\leq_{st}Y_{2:n}$, provided $\phi_2\circ\psi_1$ is super-additive.
 \end{itemize}
\end{theorem}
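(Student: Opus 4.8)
The plan is to reduce Theorem \ref{th3.16} to a combination of Theorem \ref{th3.10} (the common-copula result for the $\preceq^{w}$ order on $1/\boldsymbol\theta$) and the copula-comparison Lemmas \ref{lem3.1} and \ref{lem3.2}, exactly as Theorem \ref{th3.15} was reduced to Theorem \ref{th3.8}. For part $(i)$, I would write $\bar F_{X_{2:n}}(x)=A(x,\boldsymbol\lambda,\boldsymbol\theta;F_b,\psi_1)$ and $\bar G_{Y_{2:n}}(x)=B(x,\boldsymbol\lambda,\boldsymbol\delta;F_b,\psi_2)$, where $A$ and $B$ have the same structural form as in the proof of Theorem \ref{th3.15} but with the scale parameters $\theta_k$ (respectively $\delta_k$) now playing the variable role and the common location vector $\boldsymbol\lambda=\boldsymbol\mu$ held fixed. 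Since $\phi_2\circ\psi_1$ is sub-additive, Lemma \ref{lem3.2} gives $C_{\psi_1}\ge C_{\psi_2}$ pointwise on $[0,1]^n$, and applying this inside both the $\sum_{l}$ term and the $(n-1)$-weighted term of the representation \eqref{eq3.27}, evaluated at the \emph{same} scale vector $\boldsymbol\delta$, yields
\begin{eqnarray}\label{eqn5}
A(x,\boldsymbol\lambda,\boldsymbol\delta;F_b,\psi_1)\ge B(x,\boldsymbol\lambda,\boldsymbol\delta;F_b,\psi_2).
\end{eqnarray}
Here one must check, as in Lemma \ref{lem3.2}/Lemma \ref{lem3.1}, that the same copula inequality survives the linear combination with the negative coefficient $-(n-1)$; this is precisely the content of the cited lemmas applied to the $(n-1)$-out-of-$n$ survival function, so it is inherited rather than re-proved.

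Next I would invoke Theorem \ref{th3.10} with the common generator $\psi_1$ (legitimate because $\psi_1$ or $\psi_2$ is log-concave; if it is $\psi_2$ that is log-concave, one instead builds the chain through $A(x,\boldsymbol\lambda,\boldsymbol\theta;F_b,\psi_2)$ and uses Lemma \ref{lem3.1} in the opposite direction, so both sub-cases are covered). Since $\tfrac{1}{\boldsymbol\theta}\succeq^{w}\tfrac{1}{\boldsymbol\delta}$ and $\boldsymbol\lambda,\boldsymbol\theta,\boldsymbol\delta\in\mathcal I_+$ with $wr_b(w)$ decreasing, Theorem \ref{th3.10} gives $A(x,\boldsymbol\lambda,\boldsymbol\theta;F_b,\psi_1)\ge A(x,\boldsymbol\lambda,\boldsymbol\delta;F_b,\psi_1)$. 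Chaining this with \eqref{eqn5} produces $\bar F_{X_{2:n}}(x)\ge\bar G_{Y_{2:n}}(x)$ for all $x$, i.e. $X_{2:n}\ge_{st}Y_{2:n}$. Part $(ii)$ is the mirror image: $\phi_2\circ\psi_1$ super-additive gives $C_{\psi_1}\le C_{\psi_2}$ via Lemma \ref{lem3.1}, hence $A(x,\boldsymbol\lambda,\boldsymbol\delta;F_b,\psi_1)\le B(x,\boldsymbol\lambda,\boldsymbol\delta;F_b,\psi_2)$, and then $\tfrac{1}{\boldsymbol\delta}\succeq^{w}\tfrac{1}{\boldsymbol\theta}$ together with Theorem \ref{th3.10} (applied with $\boldsymbol\delta$ in the dominant role) gives $B(x,\boldsymbol\mu,\boldsymbol\delta;F_b,\psi_2)\ge B(x,\boldsymbol\lambda,\boldsymbol\theta;\ldots)$-type domination; combining yields $X_{2:n}\le_{st}Y_{2:n}$.

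The only genuinely delicate step is \eqref{eqn5}: one needs the copula ordering $C_{\psi_1}\ge C_{\psi_2}$ to push through a convex-type combination of the form $\sum_{l}C(\cdot)-(n-1)C(\cdot)$ rather than through a single copula evaluation, and one must be careful that the arguments fed into $C_{\psi_1}$ and $C_{\psi_2}$ (namely $1-[F_b((x-\lambda_k)/\delta_k)]^{\alpha}$) are identical on both sides — which holds here only because we first fix the scale vector at $\boldsymbol\delta$ and handle the $\boldsymbol\theta\to\boldsymbol\delta$ transition separately via Theorem \ref{th3.10}. Since exactly this pattern is already executed in the proof of Theorem \ref{th3.15} (equations \eqref{eqn1}--\eqref{eqn2} and \eqref{eqn3}--\eqref{eqn4}) with location parameters in place of scale parameters, and the structural dependence of the survival function on its arguments is formally the same, the present proof follows by the identical argument with $\boldsymbol\lambda$ and $\boldsymbol\theta$ interchanging roles. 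Hence the statement is established and the detailed computation is omitted.
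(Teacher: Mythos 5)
Your proposal is correct and follows essentially the same route the paper intends: the paper omits this proof entirely, remarking only that it ``can be completed using similar arguments as in Theorem \ref{th3.15}'', and your chain --- Lemma \ref{lem3.2} (or \ref{lem3.1}) to pass from $\psi_1$ to $\psi_2$ at a fixed scale vector, followed by Theorem \ref{th3.10} to handle the $\boldsymbol\theta\to\boldsymbol\delta$ transition under $\frac{1}{\boldsymbol\theta}\succeq^{w}\frac{1}{\boldsymbol\delta}$ --- is exactly that argument with scale parameters in place of location parameters. The one delicate point you flag, namely that the pointwise copula inequality must be pushed through the combination $\sum_{l}C(\cdot)-(n-1)C(\cdot)$ whose last term carries a negative coefficient, is treated with precisely the same brevity in the paper's own proof of Theorem \ref{th3.15} (its inequality \eqref{eqn1}), so your write-up is at the same level of detail as the source.
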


\begin{remark}
If we take $\boldsymbol{ \lambda}=(0,\cdots,0)$ and $1/\boldsymbol{ \theta}=\boldsymbol{ \lambda}$, $1/\boldsymbol{ \delta}=\boldsymbol{ \mu}$ and $\psi_1=\psi_2=\psi$, then the second part of Theorem  \ref{th3.16} reduces to Theorem $4.3$ of \cite{Li2016}.
\end{remark}
The following example is an explanation of the second part of Theorem  \ref{th3.16}.
\begin{example}
	Let $\{X_{1},X_{2},X_{3}\}$ and
	$\{Y_{1},Y_{2},Y_{3}\}$ be two sets of interdependent random variables such that $X_{i}\sim	 F_{b}^{\alpha}(\frac{x-\lambda_{i}}{\theta_i})$ and $Y_{i}\sim
	F_{b}^{\alpha}(\frac{x-\lambda_{i}}{\delta_i})$. The generators are taken as $\psi_1(x)=e^{(1-e^x)/a_1}$ and $\psi_2(x)=e^{(1-e^x)/a_2},$ where $x>0,~0<a_1,~a_2\leq1$. Here, $\psi_1$ and $\psi_2$ both are log-concave.  Also, it is clear that for $a_1\geq a_2>0,$ $\phi_2\circ\psi_1(x)=\log(1-\frac{a_2}{a_1}(1-e^x))$ is convex. This implies $\phi_2\circ\psi_1$ is super-additive. Let the baseline distribution function be the Pareto distribution as in Example \ref{ex3.1}. Clearly, $wr_b(w)$ is decreasing. Consider $\boldsymbol{\delta}=(6,7,8)$,
	$\boldsymbol{\theta}=(3,4,5)$,
	$\boldsymbol\lambda=(2,5,6),~ \alpha=0.8,~b=0.12,~a_1=0.5,$ and $a_2=0.1$. It can be checked that the conditions $1/\boldsymbol{\delta}\succeq^{w}1/\boldsymbol{\theta}$ and $\boldsymbol{\theta},~\boldsymbol{\delta}, ~\boldsymbol{\lambda}\in\mathcal{I}_{+}$ hold.
	Therefore, as an application of the second part of Theorem  \ref{th3.16},
	we have $X_{2:3}\le_{st}Y_{2:3}.$
\end{example}

In the next theorem, we consider that the scale parameters are equal and fixed. For the sake of brevity, we skip the proofs of the following two results.
\begin{theorem}\label{th3.17}
	Let  $\boldsymbol{X}\sim \mathbb{ELS}(
	\boldsymbol{\lambda},\boldsymbol{\theta},\boldsymbol{\alpha};F_{b},\psi_1)$ and $\boldsymbol{Y}\sim \mathbb{ELS}(
	\boldsymbol{\mu},\boldsymbol{\delta},\boldsymbol{\beta};F_{b},\psi_2),$ with $\boldsymbol{\alpha}=\boldsymbol{\beta}=\alpha \bm{1}_n\leq 1$ and $\boldsymbol{\theta}=\boldsymbol{\delta}=\theta\bm{1}_{n}$. Let $\phi_1=\psi^{-1}_1$, $\phi_2=\psi^{-1}_2$ and $\phi_2\circ\psi_1$ be super-additive. Also, assume $\boldsymbol{\lambda},~\boldsymbol{\mu}\in\mathcal{I}_{+}~(or~\mathcal{D}_{+})$ and $ r_{b}(w)$  is decreasing in $w$. Then, $\boldsymbol{\mu}\succeq_{w}\boldsymbol{\lambda}\Rightarrow X_{2:n}\leq_{st}Y_{2:n}$, provided $\psi_1$ or $\psi_2$ is log-concave.
\end{theorem}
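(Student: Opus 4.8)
The plan is to combine the two structural facts already available in this subsection. First I would reduce the claim to the one‑copula case plus a copula‑comparison step, exactly in the spirit of the proof of Theorem~\ref{th3.15}. Write
\[
A(x,\boldsymbol{\lambda},\theta\bm{1}_n;F_b,\psi_1)=\sum_{l=1}^{n}\psi_1\!\left[\sum_{k\neq l}^{n}\phi_1\!\left\{1-\left[F_b\!\left(\tfrac{x-\lambda_k}{\theta}\right)\right]^{\alpha}\right\}\right]-(n-1)\psi_1\!\left[\sum_{k=1}^{n}\phi_1\!\left\{1-\left[F_b\!\left(\tfrac{x-\lambda_k}{\theta}\right)\right]^{\alpha}\right\}\right],
\]
and define $B(x,\boldsymbol{\mu},\theta\bm{1}_n;F_b,\psi_2)$ analogously with $\psi_2,\phi_2,\boldsymbol{\mu}$ in place of $\psi_1,\phi_1,\boldsymbol{\lambda}$. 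Then $\bar F_{X_{2:n}}(x)=A(x,\boldsymbol{\lambda},\theta\bm{1}_n;F_b,\psi_1)$ and $\bar G_{Y_{2:n}}(x)=B(x,\boldsymbol{\mu},\theta\bm{1}_n;F_b,\psi_2)$, so it suffices to show $A(x,\boldsymbol{\lambda},\theta\bm{1}_n;F_b,\psi_1)\le B(x,\boldsymbol{\mu},\theta\bm{1}_n;F_b,\psi_2)$ for every admissible $x$.

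Next I would split this into two inequalities chained through the intermediate object $A(x,\boldsymbol{\lambda},\theta\bm{1}_n;F_b,\psi_2)$, i.e. the same parameter vector $\boldsymbol{\lambda}$ but with the generator $\psi_2$. The first inequality,
\[
A(x,\boldsymbol{\lambda},\theta\bm{1}_n;F_b,\psi_1)\le A(x,\boldsymbol{\lambda},\theta\bm{1}_n;F_b,\psi_2),
\]
comes from the super‑additivity of $\phi_2\circ\psi_1$: by Lemma~\ref{lem3.1} we have $C_{\psi_1}(\boldsymbol{v})\le C_{\psi_2}(\boldsymbol{v})$ for all $\boldsymbol{v}\in[0,1]^n$, applied coordinate‑wise to each of the $n$ copula terms $\psi_1[\sum_{k\neq l}\phi_1\{\cdot\}]$ (each of which is $C_{\psi_1}$ evaluated at the $(n-1)$-tuple of survival probabilities $1-[F_b((x-\lambda_k)/\theta)]^{\alpha}$, $k\neq l$, padded with a $1$ in the $l$-th slot so that it genuinely equals an $n$-variate copula value), together with the matching inequality $C_{\psi_1}\le C_{\psi_2}$ on the single full‑product term $\psi_1[\sum_{k=1}^{n}\phi_1\{\cdot\}]$, which enters with a negative sign and hence flips. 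Care is needed here: the $n$ ``smaller'' terms push $A$ up under $\psi_1\le C_{\psi_2}$ while the one ``subtracted'' term pushes it down, so one must check that the net effect is still $A_{\psi_1}\le A_{\psi_2}$; this is precisely the content of the copula comparison applied termwise to the representation~(\ref{eq3.27}), and it is exactly what was used (implicitly, for the super‑additive branch) inside the proof of Theorem~\ref{th3.15}(ii). The second inequality,
\[
A(x,\boldsymbol{\mu},\theta\bm{1}_n;F_b,\psi_2)\le A(x,\boldsymbol{\lambda},\theta\bm{1}_n;F_b,\psi_2)\quad\text{whenever }\boldsymbol{\mu}\succeq_w\boldsymbol{\lambda},
\]
wait — I want it the other way: with $\boldsymbol{\mu}\succeq_w\boldsymbol{\lambda}$ I need $A(x,\boldsymbol{\lambda};\psi_2)\le A(x,\boldsymbol{\mu};\psi_2)=B(x,\boldsymbol{\mu};\psi_2)$. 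Since $\boldsymbol{\theta}=\boldsymbol{\delta}=\theta\bm{1}_n$ and $r_b$ (equivalently $wr_b(w)$, the scalar scale being fixed) is decreasing and $\psi_2$ is log‑concave, Theorem~\ref{th3.8*} applied with generator $\psi_2$ gives that $\boldsymbol{\mu}\succeq_w\boldsymbol{\lambda}$ implies $A(x,\boldsymbol{\mu};\psi_2)\ge A(x,\boldsymbol{\lambda};\psi_2)$, i.e. $A(x,\boldsymbol{\lambda},\theta\bm{1}_n;F_b,\psi_2)\le B(x,\boldsymbol{\mu},\theta\bm{1}_n;F_b,\psi_2)$. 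Concatenating the two displayed inequalities yields $\bar F_{X_{2:n}}(x)\le\bar G_{Y_{2:n}}(x)$ for all $x$, which is exactly $X_{2:n}\le_{st}Y_{2:n}$.

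The only delicate point — the step I expect to be the main obstacle — is the bookkeeping in the copula‑comparison inequality $A_{\psi_1}\le A_{\psi_2}$. One has to write each summand of~(\ref{eq3.27}) honestly as an $n$-variate Archimedean copula value (inserting the trivial argument $v_l=1$, using $\phi(1)=0$), verify that Lemma~\ref{lem3.1} applies uniformly to all these values for a fixed super‑additive $\phi_2\circ\psi_1$, and then confirm that summing $n$ ``$\le$'' inequalities and subtracting one ``$\le$'' inequality still produces an overall ``$\le$'' — this works because the subtracted copula term is the one evaluated at the pointwise-smallest tuple, but it should be stated carefully rather than asserted. One then also needs the hypothesis ``$\psi_1$ or $\psi_2$ log‑concave'' only for the invocation of Theorem~\ref{th3.8*}, which requires the log‑concavity of the generator actually carrying the majorization argument, namely $\psi_2$; if instead $\psi_1$ is the log‑concave one, I would run the chain through the intermediate object $B(x,\boldsymbol{\lambda},\theta\bm{1}_n;F_b,\psi_1)$, first using Theorem~\ref{th3.8*} with $\psi_1$ to get $A(x,\boldsymbol{\lambda};\psi_1)\le A(x,\boldsymbol{\mu};\psi_1)$ and then the copula comparison to pass from $\psi_1$ to $\psi_2$ at the vector $\boldsymbol{\mu}$. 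Either way the structure is identical to Theorem~\ref{th3.15}(ii) with the scale vector specialised to $\theta\bm{1}_n$ and the decreasing‑hazard condition correspondingly weakened from ``$wr_b(w)$ decreasing'' to ``$r_b(w)$ decreasing'', so the write‑up can legitimately defer all routine steps to that earlier proof; accordingly I would present only the reduction and the two key inequalities, and omit the remaining verification, as the paper does for its companion results.
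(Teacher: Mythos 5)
Your overall plan is exactly the route the paper intends here: Theorem~\ref{th3.17} is meant to follow from the argument of Theorem~\ref{th3.15}(ii) with the scale vector specialised to $\theta\bm{1}_n$, i.e.\ a chain consisting of one copula-comparison step (Lemma~\ref{lem3.1}) and one majorization step (Theorem~\ref{th3.8*}). Your bookkeeping of the majorization half is correct, and your treatment of the hypothesis ``$\psi_1$ or $\psi_2$ log-concave'' --- rerouting the chain so that Theorem~\ref{th3.8*} is invoked with whichever generator is actually log-concave --- is more careful than the paper's own proof of the companion Theorem~\ref{th3.15}, which silently uses log-concavity of $\psi_2$ only.

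However, the step you yourself single out as delicate is not closed by the justification you offer, and it is a genuine gap. Lemma~\ref{lem3.1} delivers only the pointwise inequality $C_{\psi_1}(\boldsymbol{v})\le C_{\psi_2}(\boldsymbol{v})$, whereas $\bar F_{X_{2:n}}(x)$ is the \emph{signed} combination $\sum_{l}C(\boldsymbol{v}^{(l)})-(n-1)C(\boldsymbol{v})$, where $\boldsymbol{v}=(\bar F_1(x),\dots,\bar F_n(x))$ and $\boldsymbol{v}^{(l)}$ has its $l$-th coordinate replaced by $1$. Pointwise domination of copulas does not pass through such a combination: writing $D=C_{\psi_2}-C_{\psi_1}\ge 0$, what is needed is $\sum_{l}D(\boldsymbol{v}^{(l)})\ge (n-1)D(\boldsymbol{v})$, and $D$ is not monotone. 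A concrete failure: for $n=3$ with equal marginals $\bar F_i(x)=v$, compare the independence copula $C_1$ with a Clayton copula $C_2$ of large parameter (close to the upper Fr\'echet bound), so that $C_1\le C_2$ everywhere; the signed combination equals $3v^2-2v^3$ for $C_1$ but approximately $v$ for $C_2$, and at $v=0.8$ these are $0.896$ versus $0.8$, so the inequality runs the wrong way. Your heuristic that ``the subtracted term is evaluated at the pointwise-smallest tuple'' therefore does not rescue the step. To be fair, the paper asserts exactly the same implication without proof inside Theorem~\ref{th3.15} (``Applying \dots\ Lemma~\ref{lem3.2}, we obtain \eqref{eqn1}''), so your proposal faithfully reproduces the intended argument, gap included; but as written, the inequality $A(x,\boldsymbol{\lambda},\theta\bm{1}_n;F_b,\psi_1)\le A(x,\boldsymbol{\lambda},\theta\bm{1}_n;F_b,\psi_2)$ needs either additional hypotheses or a genuinely different justification before the chain can be concatenated.
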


In the following theorem, the location parameters are assumed to be equal and fixed. We only provide the statement of the result for the sake of brevity.

\begin{theorem}\label{th3.18}
	Let  $\boldsymbol{X}\sim \mathbb{ELS}(
	\boldsymbol{\lambda},\boldsymbol{\theta},\boldsymbol{\alpha};F_{b},\psi_1)$ and $\boldsymbol{Y}\sim \mathbb{ELS}(
	\boldsymbol{\mu},\boldsymbol{\delta},\boldsymbol{\beta};F_{b},\psi_2),$ with $\boldsymbol{\alpha}=\boldsymbol{\beta}=\alpha \bm{1}_n\leq 1$ and $\boldsymbol{\lambda}=\boldsymbol{\mu}=\lambda \bm{1}_n$. Let $\phi_1=\psi^{-1}_1$, $\phi_2=\psi^{-1}_2$ and $\phi_2\circ\psi_1$ be super-additive. Also, assume $ \boldsymbol{\theta},~\boldsymbol{\delta}\in\mathcal{I}_{+}~(or~\mathcal{D}_{+})$ and $ r_{b}(w)$  is decreasing in $w$. Then, ${\frac{1}{\boldsymbol\delta}}\succeq^{w}{\frac{1}{\boldsymbol\theta}}\Rightarrow X_{2:n}\leq_{st}Y_{2:n}$, provided $\psi_1$ or $\psi_2$ is log-concave.
\end{theorem}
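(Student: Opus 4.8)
The plan is to reduce Theorem~\ref{th3.18} to the single-generator comparison Theorem~\ref{th3.14} by means of the copula-ordering Lemma~\ref{lem3.1}, following closely the template of the proofs of Theorems~\ref{th3.16}$(ii)$ and~\ref{th3.17}. For a generator $\psi$ with inverse $\phi$ and a scale vector $\boldsymbol{\eta}=(\eta_1,\dots,\eta_n)$, let $A(x;\boldsymbol{\eta},\psi)$ denote the right-hand side of~\eqref{eq3.27} in which $\boldsymbol{\lambda}$ is replaced by $\lambda\bm{1}_n$, every $\alpha_k$ by $\alpha$, every $\theta_k$ by $\eta_k$, and the generator is $\psi$. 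Under the hypotheses one has $\bar{F}_{X_{2:n}}(x)=A(x;\boldsymbol{\theta},\psi_1)$ and $\bar{F}_{Y_{2:n}}(x)=A(x;\boldsymbol{\delta},\psi_2)$ for $x>\lambda$, so it suffices to establish $A(x;\boldsymbol{\theta},\psi_1)\le A(x;\boldsymbol{\delta},\psi_2)$ for every such $x$.

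I would isolate two independent moves. The \emph{copula move}: since $\phi_2\circ\psi_1$ is super-additive, Lemma~\ref{lem3.1} gives $C_{\psi_1}(\boldsymbol{v})\le C_{\psi_2}(\boldsymbol{v})$ for all $\boldsymbol{v}$, and likewise for every lower-dimensional Archimedean marginal; substituting this into~\eqref{eq3.27} yields $A(x;\boldsymbol{\eta},\psi_1)\le A(x;\boldsymbol{\eta},\psi_2)$ for any fixed scale vector $\boldsymbol{\eta}$, exactly as in the proofs of Theorems~\ref{th3.15}$(ii)$ and~\ref{th3.16}$(ii)$. The \emph{scale move}: fix the generator to be whichever of $\psi_1,\psi_2$ is log-concave, and apply Theorem~\ref{th3.14} to the two $\mathbb{ELS}$ families with that common generator, common location $\lambda\bm{1}_n$, common shape $\alpha\bm{1}_n\le1$, baseline $F_b$ with $r_b$ decreasing, and scale vectors $\boldsymbol{\theta},\boldsymbol{\delta}\in\mathcal{I}_+$; since $1/\boldsymbol{\delta}\succeq^{w}1/\boldsymbol{\theta}$, Theorem~\ref{th3.14} (with scale vectors $\boldsymbol{\delta}$ and $\boldsymbol{\theta}$ playing the roles of the scales of $\boldsymbol{X}$ and $\boldsymbol{Y}$ there) gives $A(x;\boldsymbol{\theta},\psi)\le A(x;\boldsymbol{\delta},\psi)$ for that generator $\psi$.

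Concatenating the two moves finishes the argument. If $\psi_2$ is log-concave, chain the copula move at $\boldsymbol{\eta}=\boldsymbol{\theta}$ with the scale move at generator $\psi_2$: $A(x;\boldsymbol{\theta},\psi_1)\le A(x;\boldsymbol{\theta},\psi_2)\le A(x;\boldsymbol{\delta},\psi_2)$. If instead $\psi_1$ is log-concave, carry out the scale move first (at generator $\psi_1$) and then the copula move at $\boldsymbol{\eta}=\boldsymbol{\delta}$: $A(x;\boldsymbol{\theta},\psi_1)\le A(x;\boldsymbol{\delta},\psi_1)\le A(x;\boldsymbol{\delta},\psi_2)$. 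In either case $\bar{F}_{X_{2:n}}(x)\le\bar{F}_{Y_{2:n}}(x)$ for all $x$, i.e. $X_{2:n}\le_{st}Y_{2:n}$. The case $\boldsymbol{\theta},\boldsymbol{\delta}\in\mathcal{D}_+$ is handled identically, invoking the $\mathcal{D}_+$ branch of Theorem~\ref{th3.14}; Lemma~\ref{lem3.1} is orthant-free.

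I expect the copula move to be the main obstacle. The survival function~\eqref{eq3.27} is a \emph{signed} combination of Archimedean-copula values --- a sum of $n$ values of the $(n-1)$-dimensional copula minus $(n-1)$ times the value of the $n$-dimensional copula --- so it is not a monotone functional of the copula in the pointwise order, and $A(x;\boldsymbol{\eta},\psi_1)\le A(x;\boldsymbol{\eta},\psi_2)$ cannot be read off from $C_{\psi_1}\le C_{\psi_2}$ alone; it must be argued from the specific $(n-1)$-out-of-$n$ structure together with the Archimedean representation, precisely as in the proofs of Theorems~\ref{th3.15}$(ii)$ and~\ref{th3.16}$(ii)$. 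A minor additional point of care is that only one of $\psi_1,\psi_2$ is assumed log-concave, which is exactly what dictates whether the copula move or the scale move is performed first.
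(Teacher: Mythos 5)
Your overall architecture is exactly the one the paper intends: the proof of Theorem~\ref{th3.18} is omitted there, but the template is the proof of Theorem~\ref{th3.15}, and your decomposition into a ``scale move'' (apply Theorem~\ref{th3.14} with whichever generator is log-concave, roles of $\boldsymbol{\theta}$ and $\boldsymbol{\delta}$ swapped so that $1/\boldsymbol{\delta}\succeq^{w}1/\boldsymbol{\theta}$ gives $A(x;\boldsymbol{\theta},\psi)\le A(x;\boldsymbol{\delta},\psi)$) and a ``copula move'' (pass from $\psi_1$ to $\psi_2$ at a fixed scale vector via Lemma~\ref{lem3.1}), chained in the order dictated by which generator is log-concave, is precisely the paper's two-step argument. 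The scale move is sound.

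The gap is the copula move, and you have diagnosed it correctly but not closed it. You observe that $\bar F_{X_{2:n}}$ is a signed combination of copula values, so $A(x;\boldsymbol{\eta},\psi_1)\le A(x;\boldsymbol{\eta},\psi_2)$ ``cannot be read off from $C_{\psi_1}\le C_{\psi_2}$ alone,'' and then defer to ``precisely as in the proofs of Theorems~\ref{th3.15}$(ii)$ and~\ref{th3.16}$(ii)$'' --- but those proofs do nothing more than cite Lemma~\ref{lem3.1}, which only gives that \emph{every} joint survival probability $\psi\bigl(\sum_{k\in I}\phi(u_k)\bigr)$ increases when $\psi_1$ is replaced by $\psi_2$; in the combination $\sum_{l}\psi\bigl(\sum_{k\ne l}\phi(u_k)\bigr)-(n-1)\psi\bigl(\sum_{k}\phi(u_k)\bigr)$ the positive and negative terms then move in conflicting directions, so the claimed inequality does not follow. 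Worse, for $n=2$ it is false in the direction you need: there $A(x;\boldsymbol{\eta},\psi)=u_1+u_2-\psi\bigl(\phi(u_1)+\phi(u_2)\bigr)$ is \emph{decreasing} in the copula, so super-additivity of $\phi_2\circ\psi_1$ (which by Lemma~\ref{lem3.1} gives $C_{\psi_1}\le C_{\psi_2}$) forces $A(x;\boldsymbol{\eta},\psi_1)\ge A(x;\boldsymbol{\eta},\psi_2)$, the reverse of your copula move. This defect is inherited from the paper's own proofs of Theorems~\ref{th3.15}--\ref{th3.16}, but since your argument rests on exactly that step, the proof as proposed does not go through; an honest completion would need either a direct Schur-type analysis of the signed combination or an additional hypothesis making $\bar F_{2:n}$ monotone in the generator.
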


We end this section with the following counterexamples. The first one shows that if some of the conditions of Theorem \ref{th3.17} get violated, then the usual stochastic order between the second-order statistics does not hold.
\begin{counterexample}\label{cex3.1}
	Consider the baseline distribution function as  $F_{b}(x)=1-\exp(1-x^a),~x\geq1,~a>0$. For this, when $0<a\leq 1,$ the hazard rate function is decreasing. Let $\{X_{1},X_{2},X_{3}\}$ and
	$\{Y_{1},Y_{2},Y_{3}\}$ be the collections of dependent random variables with $X_{i}\sim	 F_{b}^{\alpha}(\frac{x-\lambda_i}{\theta})$ and $Y_{i}\sim
	F_{b}^{\alpha}(\frac{x-\mu_i}{\theta})$, for $i=1,2,3$ with generators $\psi_1(x)=e^{\frac{1}{a_1}(1-e^x)}$ and $\psi_2(x)=e^{\frac{1}{a_2}(1-e^x)},$ respectively, where $x>0,~0<a_1,~a_2\leq1$. Here, $\psi_1$ and $\psi_2$ both are log-concave.  Assume $a_1=0.001$ and $a_2=0.5$. Then, $\phi_2\circ\psi_1(x)$ is sub-additive, and hence is not super-additive.  Set
	$\boldsymbol{\lambda}=(3,4,5)$,
	$\boldsymbol{\mu}=(6,7,8)$,
	$\theta=6,~ \alpha=8$ and $a=0.9$. Thus, all the conditions of Theorem \ref{th3.17} are satisfied except super-additivity of $\phi_2\circ\psi_1(x)$ and the restriction on $\alpha$. Now,
	we plote the graphs of $\bar{F}_{X_{2:3}}(x)$ and $\bar{F}_{Y_{2:3}}(x)$ in Figure $2a$. The graphs cross each other, which implies that Theorem \ref{th3.17} does not hold.
\end{counterexample}

The following counterexample shows that the result in Theorem \ref{th3.18} does not hold if we ignore the assumption on the generators and $\boldsymbol{\theta}$ does not belong to $\mathcal{I}_{+}~(or~\mathcal{D}_{+})$.

\begin{counterexample}\label{cex3.2}
	For the baseline distribution as in Counterexample \ref{cex3.1}, let $\{X_{1},X_{2},X_{3}\}$ and
	$\{Y_{1},Y_{2},Y_{3}\}$ be two sets of dependent random variables such that $X_{i}\sim	 F_{b}^{\alpha}(\frac{x-\lambda}{\theta_i})$ and $Y_{i}\sim
	F_{b}^{\alpha}(\frac{x-\lambda}{\delta_i})$, for $i=1,2,3$ with generator $\psi_1(x)=e^{-x^\frac{1}{a_1}}$ and $\psi_2(x)=e^{-x^\frac{1}{a_2}},$ respectively, where $x>0,~a_1,~a_2\geq1$. It is not difficult to check that $\psi_1$ and $\psi_2$ both are log-convex.  Also, for $a_2\leq a_1,$ $\phi_2\circ\psi_1(x)=x^{\frac{a_2}{a_1}}$ is concave, implies $\phi_2\circ\psi_1$ is not super-additive.  Consider $\boldsymbol{\delta}=(4.5,6.5,7.5)$,
	$\boldsymbol{\theta}=(2.5,6.5,3.1)$,
	$\lambda=5,~ \alpha=0.1,~a=0.5,~a_2=1.0001$ and $a_1=2.5$. Here, $\boldsymbol{\theta}$ does not belong to $\mathcal{I}_+ ~(\text{or }\mathcal{D}_+)$ and $\boldsymbol{\delta}\in\mathcal{I}_+.$ Hence,  all the conditions of Theorem \ref{th3.18} except the log-concavity of the generators, super-additivity of $\phi_2\circ\psi_1$ and $\boldsymbol{\theta}\in\mathcal{I}_{+}~(or~\mathcal{D}_{+})$ are satisfied. Next,
	we plot the graphs of $\bar{F}_{X_{2:3}}(x)$ and $\bar{F}_{Y_{2:3}}(x)$ in Figure $2b$, which show that Theorem \ref{th3.18} does not hold.
\end{counterexample}
\begin{figure}[h]
	\begin{center}
		\subfigure[]{\label{c1}\includegraphics[height=2.41in]{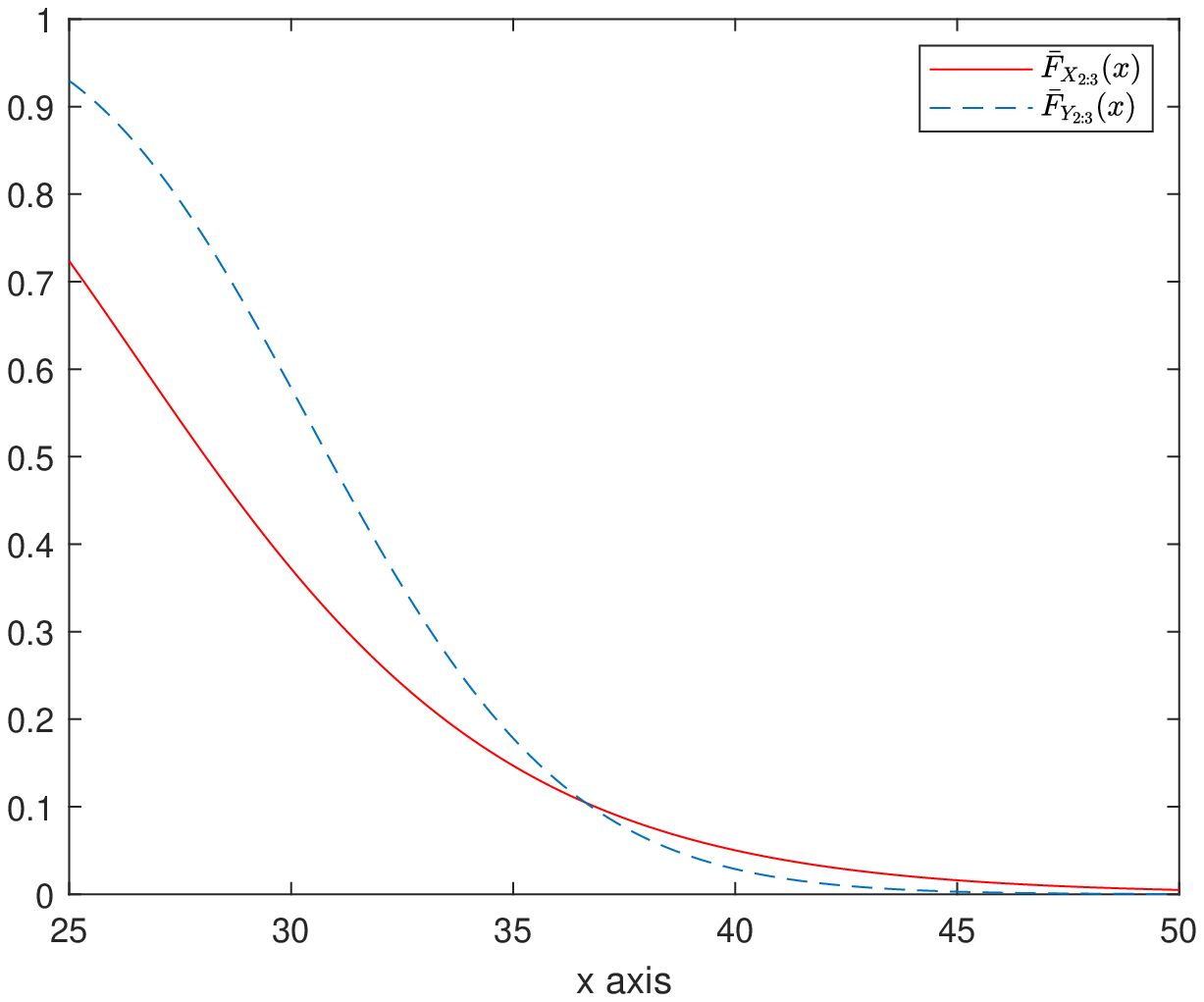}}
		\subfigure[]{\label{c2}\includegraphics[height=2.41in]{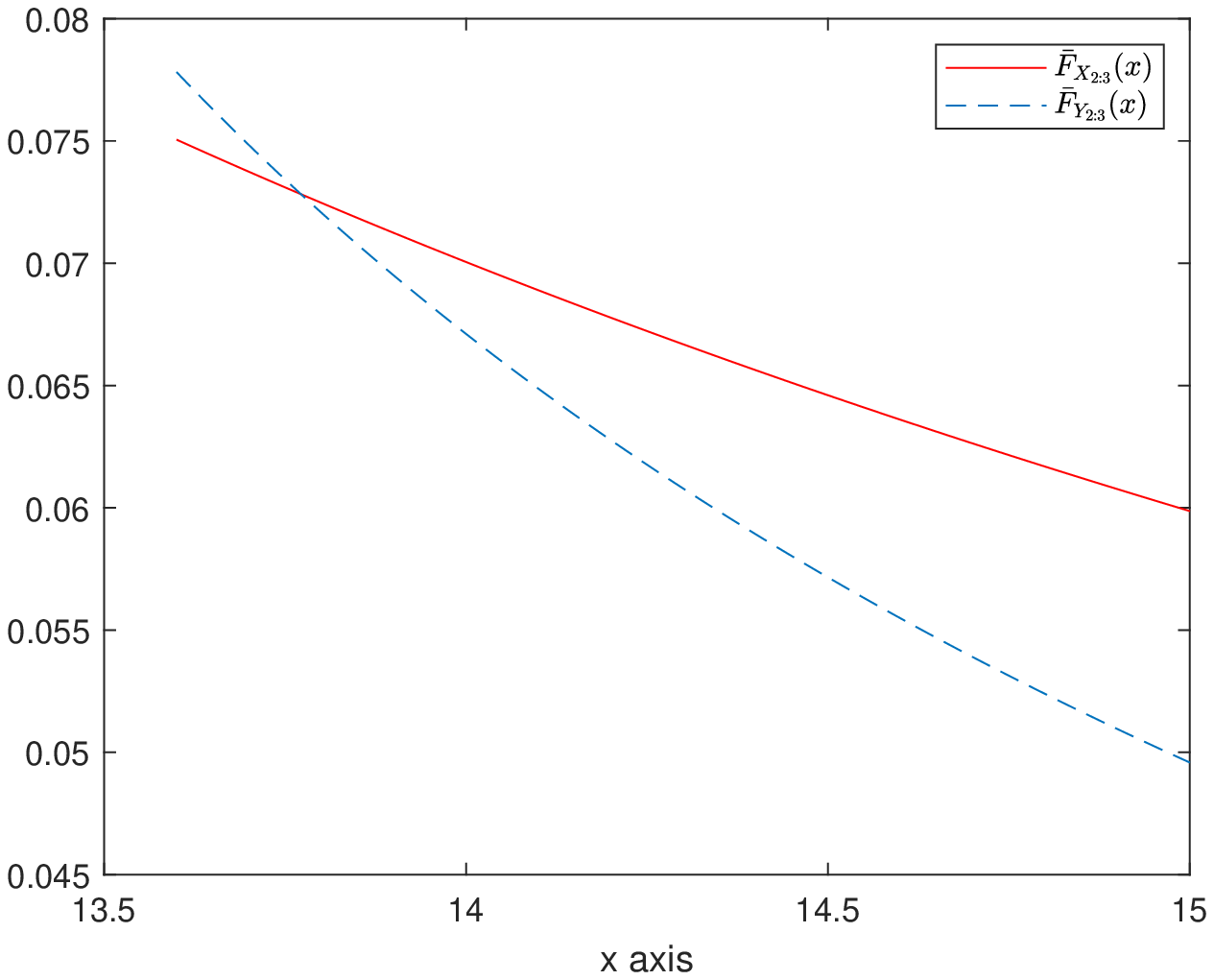}}
		\caption{
			(a) Plots of the survival functions $\bar{F}_{X_{2:3}}(x)$ and $\bar{F}_{Y_{2:3}}(x)$ as in Counterexample \ref{cex3.1}.
			(b) Plots of $\bar{F}_{X_{2:3}}(x)$ and $\bar{F}_{Y_{2:3}}(x)$ as in Counterexample \ref{cex3.2}.}
	\end{center}
\end{figure}

\section{Applications \setcounter{equation}{0}}
In various practical fields such as reliability theory, auction theory and multivariate statistics, the study of ordering results of order statistics is of great importance. In this section, we discuss some applications of the established theoretical results. There are many important fault tolerant structures, which have wide applications in the fields of military system and industrial engineering. Among these, $(n-1)$-out-of-$n$ system has drawn a considerable attention of various researchers. This is known as the fail-safe system.
\begin{itemize}
\item Let us consider two fail-safe systems with $n$ independent components following exponentiated location-scale models. According to Theorem \ref{th3.1}, more heterogeneous location parameters in the weakly submajorized order produces a fail-safe system with stochastically longer lifetime. Similar observations can be noticed from Theorems \ref{th3.2} and \ref{th3.3} for more heterogeneous reciprocal of scale parameters in terms of the weakly supermajorized and reciprocally majorized orders.

\item Due to various factors (working in the same environment, production by same company), components of a system are usually dependent. We assume that there are two fail-safe systems whose components' lifetimes are coupled by Archimedean copula. Under this situation, Theorem \ref{th3.8*} reveals that more heterogeneous location parameters in terms of the weakly submajorized order yield a stochastically longer lifetime of a fail-safe system. Similar interpretation follows from Theorems \ref{th3.9} and \ref{th3.14}, when we have more heterogeneous reciprocal of scale parameters in the sense of the reciprocally majorized and weakly supermajorized orders, respectively. Further, from Theorems \ref{th3.15}$(ii)$ and \ref{th3.16}$(ii)$, we observe that the less positive dependence, and the more heterogeneous location and reciprocal of scale parameters, respectively in the sense of the weakly submajorized and weakly supermajorized orders produce systems with greater reliability.

\item In auction theory, the auctioneers are always interested to understand the impact of the dependence among bids. Few of the established results could be useful in this field. We assume that the bids follow exponentiated location-scale model and coupled by Archimedean copulas. Then, Theorems \ref{th3.15}$(ii)$ and \ref{th3.16}$(ii)$ state that in the second-price reverse auction, the less positive dependence, and the more heterogeneous bids will be stochastically larger. These results are helpful to the auctioneers in order to realize valuable information, since the dependence and heterogeneity on bids strongly associate with the information. This may effect the said price very badly.
\end{itemize}

\section{Conclusion\setcounter{equation}{0}}
In this paper, we studied ordering results for the second-order statistics arising from two sets of heterogeneous random observations. The random variables follow exponentiated location-scale distributions. Here, we have considered two cases: independent and interdependent random observations. For the case of independent observations, we studied comparisons of the second-order statistics in terms of the usual stochastic order and the hazard rate order. Similar results have been obtained for the case of the dependent random variables under different sets of conditions. In this case, we assumed that the groups of dependent observations share a common Archimedean copula. Few results have also been derived for Archimedean copula with different generators. The established results are useful to derive bounds of the time to failure of  fail-safe systems with heterogeneous components in terms of the fail-safe systems with identical components. These results are also useful in improvement of the reliability of fail-safe systems and in understanding the impact of dependence among the bids in second-price reverse auction.
\\
\\
\section*{Acknowledgements:}
Sangita Das wishes to thank the Ministry of Education (formerly known as MHRD), Government of India for the financial support to carry out this research work. Suchandan Kayal gratefully acknowledges the partial financial support for this research work under a grant MTR/2018/000350, SERB, India.
\section*{Disclosure statement:}
Both the authors states that there is no conflict of interest.

\bibliography{ref}
\end{document}